\theoremstyle{plain}
\newtheorem{theo}{Theorem}[section]
\newtheorem{lemma}[theo]{Lemma}
\newtheorem{cor}[theo]{Corollary}
\newtheorem{prop}[theo]{Proposition}
\newtheorem{defi}[theo]{Definition}
\theoremstyle{remark}
\newtheorem{remark}{Remark}[section]
\newtheorem{notation}{Notations}[section]
\newtheorem{example}{Example}[section]
\newcommand{\typ}[1]{\substack{\vspace{-2mm}\\ \begin{tikzpicture}[line cap=round,line join=round,>=triangle 45,x=0.7cm,y=0.7cm]
\clip(1.8,1.9) rectangle (2.2,2.7);
\draw [line width=.5pt,dash pattern=on #1pt off #1pt] (2.,2.5)-- (2.,2.);
\begin{scriptsize}
\end{scriptsize}
\end{tikzpicture}}}
\newcommand{\tun}{\begin{tikzpicture}[line cap=round,line join=round,>=triangle 45,x=0.5cm,y=0.5cm]
\clip(1.8,1.9) rectangle (2.2,2.2);
\begin{scriptsize}
\draw [fill=black] (2.,2.) circle (1pt);
\end{scriptsize}
\end{tikzpicture}}
\newcommand{\tdeux}[1]{\begin{tikzpicture}[line cap=round,line join=round,>=triangle 45,x=0.7cm,y=0.7cm]
\clip(1.8,1.9) rectangle (2.2,2.7);
\draw [line width=.5pt,dash pattern=on #1pt off #1pt] (2.,2.5)-- (2.,2.);
\begin{scriptsize}
\draw [fill=black] (2.,2.) circle (1pt);
\draw [fill=black] (2.,2.5) circle (1pt);
\end{scriptsize}
\end{tikzpicture}}
\newcommand{\ttroisun}[2]{\begin{tikzpicture}[line cap=round,line join=round,>=triangle 45,x=0.7cm,y=0.7cm]
\clip(1.5,1.9) rectangle (2.5,2.7);
\draw [line width=0.5pt,dash pattern=on #1pt off #1pt] (2.,2.)-- (1.7,2.5);
\draw [line width=0.5pt,dash pattern=on #2pt off #2pt] (2.,2.)-- (2.3,2.5);
\begin{scriptsize}
\draw [fill=black] (1.7,2.5) circle (1pt);
\draw [fill=black] (2.,2.) circle (1pt);
\draw [fill=black] (2.3,2.5) circle (1pt);
\end{scriptsize}
\end{tikzpicture}}
\newcommand{\tdun}[1]{\begin{tikzpicture}[line cap=round,line join=round,>=triangle 45,x=0.7cm,y=0.7cm]
\clip(1.8,1.9) rectangle (2.5,2.5);
\begin{scriptsize}
\draw [fill=black] (2.,2.) circle (1pt);
\end{scriptsize}
\draw(2.3,2.1) node {\tiny #1};
\end{tikzpicture}}
\newcommand{\tddeux}[3]{\begin{tikzpicture}[line cap=round,line join=round,>=triangle 45,x=0.7cm,y=0.7cm]
\clip(1.8,1.9) rectangle (2.5,3.);
\draw [line width=.5pt, dash pattern=on #3pt off #3pt] (2.,2.5)-- (2.,2.);
\begin{scriptsize}
\draw [fill=black] (2.,2.) circle (1pt);
\draw [fill=black] (2.,2.5) circle (1pt);
\end{scriptsize}
\draw(2.3,2.1) node {\tiny #1};
\draw(2.3,2.6) node {\tiny #2};
\end{tikzpicture}}
\newcommand{\tdtroisun}[5]{\begin{tikzpicture}[line cap=round,line join=round,>=triangle 45,x=0.7cm,y=0.7cm]
\clip(1.5,1.9) rectangle (2.5,3.2);
\draw [line width=0.5pt, dash pattern=on #4pt off #4pt] (2.,2.)-- (1.7,2.5);
\draw [line width=0.5pt, dash pattern=on #5pt off #5pt] (2.,2.)-- (2.3,2.5);
\begin{scriptsize}
\draw [fill=black] (1.7,2.5) circle (1pt);
\draw [fill=black] (2.,2.) circle (1pt);
\draw [fill=black] (2.3,2.5) circle (1pt);
\end{scriptsize}
\draw(2.4,2.1) node {\tiny #1};
\draw(2.3,2.8) node {\tiny #2};
\draw(1.7,2.8) node {\tiny #3};
\end{tikzpicture}}
\newcommand{\tdtroisdeux}[5]{\begin{tikzpicture}[line cap=round,line join=round,>=triangle 45,x=0.7cm,y=0.7cm]
\clip(1.8,1.9) rectangle (2.5,3.5);
\draw [line width=0.5pt, dash pattern=on #4pt off #4pt] (2.,2.5)-- (2.,2.);
\draw [line width=0.5pt, dash pattern=on #5pt off #5pt] (2.,2.5)-- (2.,3.);
\begin{scriptsize}
\draw [fill=black] (2.,2.) circle (1pt);
\draw [fill=black] (2.,2.5) circle (1pt);
\draw [fill=black] (2.,3.) circle (1pt);
\end{scriptsize}
\draw(2.3,2.1) node {\tiny #1};
\draw(2.3,2.6) node {\tiny #2};
\draw(2.3,3.1) node {\tiny #3};
\end{tikzpicture}}
\title{Algebraic structures on typed decorated rooted trees}
\date{}
\author{Lo\"\i c Foissy}
\affil{\small{Univ. Littoral Côte d'Opale, UR 2597
LMPA, Laboratoire de Mathématiques Pures et Appliquées Joseph Liouville
F-62100 Calais, France}.\\ Email: \texttt{foissy@univ-littoral.fr}}
\newcommand{\K}{\mathbb{K}}
\renewcommand{\L}{\mathcal{L}}
\newcommand{\T}{\mathcal{T}}
\newcommand{\D}{\mathcal{D}}
\newcommand{\g}{\mathfrak{g}}
\newcommand{\N}{\mathbb{N}}
\newcommand{\adm}{\mathrm{Adm}}
\newcommand{\h}{\mathcal{H}}
\newcommand{\bfT}{\mathbb{T}}
\newcommand{\bfF}{\mathbb{F}}
\renewcommand{\P}{\mathcal{P}}
\newcommand{\M}{\mathcal{M}}
\newcommand{\dec}{\mathrm{dec}}
\newcommand{\id}{\mathrm{Id}}
\newcommand{\type}{\mathrm{type}}
\renewcommand{\ker}{\mathrm{Ker}}
\newcommand{\im}{\mathrm{Im}}
\begin{document}

\maketitle

\tableofcontents

\begin{abstract}
Typed decorated trees are used by Bruned, Hairer and Zambotti to give a description of a renormalisation process
on stochastic PDEs. We here study the algebraic structures on these objects: multiple prelie algebras
and related operads (generalizing a result by Chapoton and Livernet), 
noncommutative and cocommutative Hopf algebras (generalizing Grossman and Larson's construction),
commutative and noncocommutative Hopf algebras (generalizing Connes and Kreimer's construction),
bialgebras in cointeraction (generalizing Calaque, Ebrahimi-Fard and Manchon's result). 
We also define families of morphisms and in particular we prove that any Connes-Kreimer Hopf algebra
of typed and decorated trees is isomorphic to a Connes-Kreimer Hopf algebra of non--typed and decorated
trees (the set of decorations of vertices being bigger), through a contraction process,
and finally obtain the Bruned-Hairer-Zambotti construction as a subquotient.
\end{abstract}

\textbf{Keywords.} typed tree; combinatorial Hopf algebras; pre-Lie algebras; operads.\\

\textbf{AMS classification}. 05C05, 16T30, 18D50, 17D25.

\section*{Introduction}

Bruned, Hairer and Zambotti used in \cite{Bruned,Hairer} typed trees in an essential way
to give a systematic description of a canonical renormalisation procedure of
stochastic PDEs. Typed trees are rooted trees  in which edges are decorated by elements of a fixed set $\T$ of types.
They also appear in a context of low dimension topology in \cite{Mathar} (there, described as nested parentheses)
and for the description
of combinatorial species in \cite{Bergeron}.
We here study several algebraic structures on these trees, generalizing results of Connes and Kreimer \cite{ConnesKreimer},
Chapoton and Livernet \cite{ChapotonLivernet}, Grossman and Larson \cite{GrossmanLarson}, 
Calaque, Ebrahimi-Fard and Manchon \cite{Manchon}.

In the work of Bruned, Hairer and Zambotti, the considered trees are typed, with a finite set of types
denoted by $\mathfrak{L}$, and labeled. We here forget about the labels and study the algebraic structures
induced by types.
We first define grafting products of trees, similar to the pre-Lie product of \cite{Chapoton}. 
For any type $t$, we obtain a pre-Lie product $\bullet_t$ on the space $\g_{\D,\T}$ of $\T$-typed trees
which vertices are decorated by elements of a set $\D$. 
For example, if $\typ{10}$ and $\typ{2}$ are two types, if $a$, $b$, $c\in \D$, then:
\begin{align*}
\tddeux{$a$}{$b$}{10}\bullet_{\typ{10}} \tdun{$c$}&=
\tdtroisun{$a$}{$c$}{$b$}{10}{10}+\tdtroisdeux{$a$}{$b$}{$c$}{10}{10},&
\tddeux{$a$}{$b$}{10}\bullet_{\typ{2}} \tdun{$c$}&=
\tdtroisun{$a$}{$c$}{$b$}{10}{2}+\tdtroisdeux{$a$}{$b$}{$c$}{10}{2}.
\end{align*}
Then $\g_{\D,\T}$, equipped with all these products, is a $\T$-multiple pre-Lie algebra
(Definition \ref{defi3}), also called matching pre-Lie algebras in \cite{ZhangGaoGuo}:
for any types $t$ and $t'$, for any $x,y,z\in \g_{\D,\T}$,
\[x\bullet_{t'}(y \bullet_t z)-(x\bullet_{t'} y)\bullet_t z=x\bullet_t(z \bullet_{t'} y)-(x\bullet_t z)\bullet_{t'} y.\]
We prove in Corollary \ref{cor9} that 
it is the free $\T$-multiple pre-Lie algebra generated by $\D$, generalizing the result of \cite{ChapotonLivernet}.
Consequently, we obtain a combinatorial description of the operad of $\T$-multiple pre-Lie algebras
in terms of $\T$-typed trees with indexed vertices (Theorem \ref{theo11}): for example,
\begin{align*}
\tddeux{$1$}{$2$}{10}\circ_1\tddeux{$1$}{$2$}{2}&=\tdtroisun{$1$}{$3$}{$2$}{2}{10}
+\tdtroisdeux{$1$}{$2$}{$3$}{2}{10},&
\tddeux{$1$}{$2$}{2}\circ_2\tddeux{$1$}{$2$}{10}&=\tdtroisdeux{$1$}{$2$}{$3$}{2}{10}.
\end{align*}
We also give a desription of the Koszul dual operad and of its free algebras in Propositions
\ref{prop12} and \ref{prop13}, generalizing a result of \cite{Chapoton}. 

For any family $\lambda=(\lambda_t)_{t\in \T}$ with a finite support, the product 
$\bullet_\lambda=\sum \lambda_t\bullet_t$ is pre-Lie: 
using the Guin-Oudom construction \cite{Guin2,Guin1}, we obtain a Hopf algebraic structure
$\h_{\D,\T}^{GL_\lambda}=(S(\g_{\D,\T}),\star_\lambda,\Delta)$ 
on the symmetric algebra generated by $\T$-typed and $\D$-decorated trees,
that is to say on the space of $\T$-typed and $\D$-decorated forests. The coproduct $\Delta$
is given by partitions of forests into two forests and the $\star_\lambda$ product is given by grafting.
For example:
\begin{align*}
\tddeux{$a$}{$b$}{10}\star_\lambda \tdun{$c$}&=\tddeux{$a$}{$b$}{10}\tdun{$c$}
+\lambda_{\typ{10}}\tdtroisun{$a$}{$c$}{$b$}{10}{10}
+\lambda_{\typ{10}}\tdtroisdeux{$a$}{$b$}{$c$}{10}{10}
+\lambda_{\typ{2}}\tdtroisun{$a$}{$c$}{$b$}{10}{2}
+\lambda_{\typ{2}}\tdtroisdeux{$a$}{$b$}{$c$}{10}{2}.
\end{align*}
In the non-typed case, we get back the Grossman-Larson Hopf algebra of trees \cite{GrossmanLarson}.
Dually, we obtain Hopf algebras $\h_{\D,\T}^{CK_\lambda}$, generalizing the Connes-Kreimer Hopf algebra
\cite{ConnesKreimer} of rooted trees. For example:
\begin{align*}
\Delta^{CK_\lambda}(\tddeux{$a$}{$b$}{10})&=\tddeux{$a$}{$b$}{10}\otimes 1+1\otimes \tddeux{$a$}{$b$}{10}
+\lambda_{\typ{10}} \tdun{$a$}\otimes \tdun{$b$},\\
\Delta^{CK_\lambda}(\tdtroisun{$a$}{$c$}{$b$}{10}{10})
&=\tdtroisun{$a$}{$c$}{$b$}{10}{10}\otimes 1+1\otimes \tdtroisun{$a$}{$c$}{$b$}{10}{10}
+\lambda_{\typ{10}} \tddeux{$a$}{$b$}{10}\otimes \tdun{$c$}
+\lambda_{\typ{10}} \tddeux{$a$}{$c$}{10}\otimes \tdun{$b$}
+\lambda_{\typ{10}}^2 \tdun{$a$}\otimes \tdun{$b$}\tdun{$c$},\\
\Delta^{CK_\lambda}(\tdtroisun{$a$}{$c$}{$b$}{10}{2})
&=\tdtroisun{$a$}{$c$}{$b$}{10}{2}\otimes 1+1\otimes \tdtroisun{$a$}{$c$}{$b$}{10}{2}
+\lambda_{\typ{2}} \tddeux{$a$}{$b$}{10}\otimes \tdun{$c$}
+\lambda_{\typ{10}} \tddeux{$a$}{$c$}{2}\otimes \tdun{$b$}
+\lambda_{\typ{10}}\lambda_{\typ{2}} \tdun{$a$}\otimes \tdun{$b$}\tdun{$c$}.
\end{align*}
This Hopf algebra satisfies a universal property in Hochschild cohomology, as does the Connes-Kreimer's Hopf algebra.
We describe it in the simpler case where $\T$ is finite (Theorem \ref{theo22}).
We finally give a second coproduct $\delta$ on $\h_{\D,\T}^{CK_\lambda}$, such that 
$\h_{\D,\T}^{CK_\lambda}$ is a Hopf algebra in the category
of $(S(\g_{\D,\T}),m,\delta)$-right comodules, generalizing the result of \cite{Manchon}.
This coproduct $\delta$ is given by a contraction-extraction process. For example, in the non-decorated case:
\begin{align*}
\delta(\tun)&=\tun \otimes \tun,\\
\delta(\tdeux{10})&=\tdeux{10} \otimes \tun\tun+\tun \otimes \tdeux{10},\\
\delta(\ttroisun{10}{10})&=\ttroisun{10}{10}\otimes \tun\tun\tun
+2\tdeux{10}\otimes \tdeux{10}\tun+\tun \otimes \ttroisun{10}{10},\\
\delta(\ttroisun{10}{2})&=\ttroisun{10}{2}\otimes \tun\tun\tun
+\tdeux{10}\otimes \tdeux{2}+\tdeux{2}\otimes \tdeux{10}\tun+\tun \otimes \ttroisun{10}{2}.
\end{align*}

We are also interested in morphisms between these objects. 
We prove that if $\lambda$ and $\mu$ are both nonzero, then the pre-Lie algebras $(\g_{\D,\T},\bullet_\lambda)$
and $(\g_{\D,\T},\bullet_\mu)$ are isomorphic (Corollary \ref{cor18}). Consequently,
if $\lambda$ and $\mu$ are both nonzero, the Hopf algebras $\h_{\D,\T}^{GL_\lambda}$
and $\h_{\D,\T}^{GL_\mu}$ are isomorphic; dually, the Hopf algebras $\h_{\D,\T}^{CK_\lambda}$
and $\h_{\D,\T}^{CK_\mu}$ are isomorphic (Corollary \ref{cor24}). 
Using Livernet's rigidity theorem \cite{Livernet} and a nonassociative permutative coproduct
defined in Proposition \ref{prop14}, we prove that if $\lambda\neq 0$,
then $(\g_{\D,\T},\bullet_\lambda)$ is, as a pre-Lie algebra, freely generated by a family of typed trees
$\D'=\T_{\D,\T}^{(t_0)}$ satisfying a condition on the type of edges born from the root (Corollary \ref{cor15}).
As a consequence, the Hopf algebra $\h_{\D,\T}^{CK_\lambda}$ of typed and decorated trees is isomorphic
to a Connes-Kreimer Hopf algebra of non typed and decorated trees $\h_{\D'}^{CK}$,
and an explicit isomorphism is described with the help of contraction in Proposition \ref{prop26}.\\

This paper is organized as follows: the first section gives the basic definition of typed rooted trees
and enumeration results, when the number of types and decorations are finite.
The second section is about the $\T$-multiple pre-Lie algebra structures on these trees and the underlying operads..
The freeness of the pre-Lie structures on typed decorated trees and its consequences are studied in the third section.
In the last section, the dual Hopf algebras $\h_{\D,\T}^{GL_\lambda}$ and $\h_{\D,\T}^{CK_\lambda}$ are defined
and studied and related to the constructions of Bruned, Hairer and Zambotti \cite{Bruned,Hairer}: 
forgetting the labels, the two coproducts they use on a family of typed and partially decorated trees
are a subquotient of a the construction presented here. \\

\begin{notation} \begin{itemize}
\item We denote by $\K$ a commutative field of characteristic zero. All the objects (vector spaces, algebras,
 coalgebras, pre-Lie algebras$\ldots$)
in this text will be taken over $\K$.
\item For any $n\in \mathbb{N}$, we denote by $[n]$ the set $\{1,\ldots,n\}$.
\item For any set $\T$, we denote by $\K^\T$ the set of family $\lambda=(\lambda_t)_{t\in \T}$ of elements of $\K$
indexed by $\T$, and we denote by $\K^{(\T)}$ the set of elements $\lambda\in \K^\T$ with a finite support.
Note that if $\T$ is finite, then $\K^\T=\K^{(\T)}$. 
\end{itemize}\end{notation}

\section{Typed decorated trees}

\subsection{Definition}

\begin{defi}
Let $\D$ and $\T$ be two nonempty sets. 
\begin{enumerate}
\item A $\D$-decorated $\T$-typed forest is a triple $(F,\dec,\type)$, where:
\begin{itemize}
\item $F$ is a rooted forest. The set of its vertices is denoted by $V(F)$ and the set of its edges by $E(F)$.
\item $\dec:V(F)\longrightarrow \D$ is a map.
\item $\type:E(F)\longrightarrow \T$ is a map.
\end{itemize} 
If the underlying rooted forest of $F$ is connected, we shall say that $F$ is a $\D$-decorated $\T$-typed tree. 
\item If $(F,\dec_F,\type_F)$ and $(G,\dec_G,\type_G)$ are two $\D$-decorated  $\T$-typed  forests,
they are isomorphic if there exists a rooted forest isomorphism $f$ from $F$ to $G$ such that for any vertex $v$ of $F$,
$\dec_G(f(v))=\dec_F(v)$ and for any edge $e$ of $F$, $\type_G(f(e))=\type_F(e)$
\item For any finite set $A$, we denote by $\bfT_\T(A)$ the set of $A$-decorated 
$\T$-typed trees $T$ such that $V(T)=A$
and $\dec=\id_A$, and by $\bfF_\T(A)$ the set of $A$-decorated $\T$-typed forests $F$ such that $V(F)=A$ and $\dec=\id_A$.
\item For any $n\geq 0$, we denote by $\bfT_{\D,\T}(n)$ the set of isomorphism classes of $\D$-decorated $\T$-typed trees $T$ 
such that $|V(T)|=n$ and by $\bfF_{\D,\T}(n)$ the set of isomorphism classes of $\D$-decorated $\T$-typed forests $F$ 
such that $|V(F)|=n$. 
We also put:
\begin{align*}
\bfT_{\D,\T}&=\bigsqcup_{n\geq 0} \bfT_{\D,\T}(n),&\bfF_{\D,\T}&=\bigsqcup_{n\geq 0} \bfF_{\D,\T}(n).
\end{align*}
\end{enumerate}
\end{defi}

\begin{example}
We shall represent the decorations of the vertices by letters alongside them.  
If $\T$ contains two elements, represented by $\typ{10}$ and $\typ{2}$, then:
\begin{align*}
\bfF_{\D,\T}(1)&=\{\tdun{$d$},d\in \D\},\\
\bfF_{\D,\T}(2)&=\left\{\begin{array}{c}
\tdun{$a$}\tdun{$b$},\tddeux{$a$}{$b$}{10},\tddeux{$a$}{$b$}{2}, a,b\in \D
\end{array}\right\},\\
\bfF_{\D,\T}(3)&=\left\{\begin{array}{c}
\tdun{$a$}\tdun{$b$}\tdun{$c$},\tddeux{$a$}{$b$}{10}\tdun{$c$},\tddeux{$a$}{$b$}{2}\tdun{$c$},
\tdtroisun{$a$}{$c$}{$b$}{10}{10},\tdtroisun{$a$}{$c$}{$b$}{2}{10},\tdtroisun{$a$}{$c$}{$b$}{2}{2},
\tdtroisdeux{$a$}{$b$}{$c$}{10}{10},\tdtroisdeux{$a$}{$b$}{$c$}{2}{10},
\tdtroisdeux{$a$}{$b$}{$c$}{10}{2},\tdtroisdeux{$a$}{$b$}{$c$}{2}{2}, a,b,c\in \D\end{array}\right\}.
\end{align*}
Note that for any $a$, $b$, $c\in \D$:
\begin{align*}
\tdun{$a$}\tdun{$b$}&=\tdun{$b$}\tdun{$a$},&
\tdtroisun{$a$}{$c$}{$b$}{10}{10}&=\tdtroisun{$a$}{$b$}{$c$}{10}{10},&
\tdtroisun{$a$}{$c$}{$b$}{10}{2}&=\tdtroisun{$a$}{$b$}{$c$}{2}{10},&
\tdtroisun{$a$}{$c$}{$b$}{2}{2}&=\tdtroisun{$a$}{$b$}{$c$}{2}{2}.
\end{align*}
Moreover:
\begin{align*}
\bfF_{\D,\T}([1])&=\{\tdun{$1$}\},\\
\bfF_{\D,\T}([2])&=\left\{\begin{array}{c}\tdun{$1$}\tdun{$2$},
\tddeux{$1$}{$2$}{10},\tddeux{$2$}{$1$}{10},\tddeux{$1$}{$2$}{2},\tddeux{$2$}{$1$}{2}\end{array}\right\},\\
\bfF_{\D,\T}([3])&=\left\{\begin{array}{l}
\tdun{$1$}\tdun{$2$}\tdun{$3$},
\tddeux{$1$}{$2$}{10}\tdun{$3$},\tddeux{$1$}{$3$}{10}\tdun{$2$},\tddeux{$2$}{$1$}{10}\tdun{$3$},
\tddeux{$2$}{$3$}{10}\tdun{$1$},\tddeux{$3$}{$1$}{10}\tdun{$2$},\tddeux{$3$}{$2$}{10}\tdun{$1$},\\
\tddeux{$1$}{$2$}{2}\tdun{$3$},\tddeux{$1$}{$3$}{2}\tdun{$2$},\tddeux{$2$}{$1$}{2}\tdun{$3$},
\tddeux{$2$}{$3$}{2}\tdun{$1$},\tddeux{$3$}{$1$}{2}\tdun{$2$},\tddeux{$3$}{$2$}{2}\tdun{$1$},\\
\tdtroisun{$1$}{$3$}{$2$}{10}{10},\tdtroisun{$2$}{$3$}{$1$}{10}{10},\tdtroisun{$3$}{$2$}{$1$}{10}{10},
\tdtroisun{$1$}{$3$}{$2$}{2}{10},\tdtroisun{$1$}{$2$}{$3$}{2}{10},\tdtroisun{$2$}{$3$}{$1$}{2}{10},
\tdtroisun{$2$}{$1$}{$3$}{2}{10},\tdtroisun{$3$}{$2$}{$1$}{2}{10},\tdtroisun{$3$}{$1$}{$2$}{2}{10},
\tdtroisun{$1$}{$3$}{$2$}{2}{2},\tdtroisun{$2$}{$3$}{$1$}{2}{2},
\tdtroisun{$3$}{$2$}{$1$}{2}{2},\\
\tdtroisdeux{$1$}{$2$}{$3$}{10}{10},\tdtroisdeux{$1$}{$3$}{$2$}{10}{10},\tdtroisdeux{$2$}{$1$}{$3$}{10}{10},
\tdtroisdeux{$2$}{$3$}{$1$}{10}{10},\tdtroisdeux{$3$}{$1$}{$2$}{10}{10},\tdtroisdeux{$3$}{$2$}{$1$}{10}{10},
\tdtroisdeux{$1$}{$2$}{$3$}{2}{10},\tdtroisdeux{$1$}{$3$}{$2$}{2}{10},
\tdtroisdeux{$2$}{$1$}{$3$}{2}{10},\tdtroisdeux{$2$}{$3$}{$1$}{2}{10},
\tdtroisdeux{$3$}{$1$}{$2$}{2}{10},\tdtroisdeux{$3$}{$2$}{$1$}{2}{10},\\
\tdtroisdeux{$1$}{$2$}{$3$}{10}{2},\tdtroisdeux{$1$}{$3$}{$2$}{10}{2},
\tdtroisdeux{$2$}{$1$}{$3$}{10}{2},\tdtroisdeux{$2$}{$3$}{$1$}{10}{2},
\tdtroisdeux{$3$}{$1$}{$2$}{10}{2},\tdtroisdeux{$3$}{$2$}{$1$}{10}{2},
\tdtroisdeux{$1$}{$2$}{$3$}{2}{2},\tdtroisdeux{$1$}{$3$}{$2$}{2}{2},
\tdtroisdeux{$2$}{$1$}{$3$}{2}{2},\tdtroisdeux{$2$}{$3$}{$1$}{2}{2},
\tdtroisdeux{$3$}{$1$}{$2$}{2}{2},\tdtroisdeux{$3$}{$2$}{$1$}{2}{2}
\end{array}\right\}.
\end{align*}
\end{example}

\begin{remark}
If $|\T|=1$, all the edges of elements of $\bfF_{\D,\T}$ have the same type: 
we work with $\D$-decorated rooted forests.
In this case, we shall omit $\T$ in the indices describing the forests, trees, spaces we are considering.
\end{remark}

\subsection{Enumeration}

We assume here that $\D$ and $\T$ are finite, of respective cardinality $D$ and $T$. For all $n\geq 0$, we put:
\begin{align*}
t_{D,T}(n)&=|\bfT_{\T,\D}(n)|,& f_{D,T}(n)&=|\bfF_{\T,\D}(n)|,\\
T_{D,T}(X)&=\sum_{n=1}^\infty t_{D,T}(n)X^n,& F_{D,T}(X)&=\sum_{n=0}^\infty f_{D,T}(n)X^n.
\end{align*}
As any element of $\bfF_{\T,\D}$ can be uniquely decomposed as the disjoint union of its connected components, which are
elements of $\bfT_{\D,\T}$, we obtain:
\begin{align}
\label{EQ1} F_{D,T}(X)&=\prod_{n=1}^\infty \frac{1}{(1-X^n)^{t_{D,T}(n)}}.
\end{align}
We put $\T=\{t_1,\ldots,t_T\}$. For any $d\in \D$, we consider:
\begin{align*}
B_d&:\left\{\begin{array}{rcl}
(\bfF_{\D,\T})^T&\longrightarrow&\bfT_{\D,\T}\\
(F_1,\ldots,F_T)&\longrightarrow&B_d(F_1,\ldots,F_T),
\end{array}\right.
\end{align*}
where $B_d(F_1,\ldots,F_T)$ is the tree obtained by grafting the forests $F_1,\ldots,F_n$ 
on a common root decorated by $d$;
the edges from this root to the roots of $F_i$ are of type $t_i$ for any $1\leq i\leq T$.
Then $B_d$ is injective, homogeneous of degree $1$, and moreover $\bfT_{\D,\T}$ 
is the disjoint union of the $B_d((\bfF_{\D,\T})^T)$, $d\in \D$. Hence:
\begin{align}
\label{EQ2} T_{D,T}(X)&=DX (F_{D,T})^T=DX\prod_{n=1}^\infty\frac{1}{(1-X^n)^{t_{D,T}(n)T}}.
\end{align}
Note that (\ref{EQ2}) allows to compute $t_{D,T}(n)$ by induction on $n$, 
and (\ref{EQ1}) allows to deduce $f_{D,T}(n)$.

\begin{lemma}
For any $n\in \N$, 
\[t_{D,T}(n)=\frac{t_{TD,1}(n)}{T}.\]
\end{lemma}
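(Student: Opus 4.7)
My plan is to produce an explicit bijection
$$\Phi \colon \bfT_{\D,\T}(n) \times \T \longrightarrow \bfT_{\T \times \D}(n),$$
from pairs consisting of a $\D$-decorated $\T$-typed tree on $n$ vertices and a chosen type $t \in \T$, to $(\T \times \D)$-decorated non-typed trees on $n$ vertices. Since $|\T \times \D| = TD$, the right-hand side has cardinality $t_{TD,1}(n)$ and the left-hand side has cardinality $T \cdot t_{D,T}(n)$, so such a bijection immediately delivers the desired identity. The conceptual content is that each non-root vertex can absorb the type of its incoming edge into its own decoration, but the root carries no incoming edge and therefore needs an external type label, which is exactly where the factor $T$ comes from.

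Given a representative $(T,\dec,\type)$ of a class in $\bfT_{\D,\T}(n)$ with root $r$, together with $t \in \T$, I will define $\Phi(T,t)$ on the same underlying rooted tree: each non-root vertex $v$ receives the decoration $(\type(e_v),\dec(v)) \in \T \times \D$, where $e_v$ is the unique edge from $v$ toward the root, and the root itself receives $(t,\dec(r))$. The inverse is the obvious coordinate projection: at any non-root vertex, read the first component of the decoration as the type of the incoming edge and the second component as the $\D$-decoration; at the root, extract the first component as the auxiliary element of $\T$ and the second as the $\D$-decoration.

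The only point requiring justification is that $\Phi$ descends to isomorphism classes. An isomorphism of $\D$-decorated $\T$-typed rooted trees fixes the root and preserves both vertex decorations and edge types; symmetrically, an isomorphism of $(\T \times \D)$-decorated rooted trees fixes the root and preserves vertex decorations. Under $\Phi$ these two kinds of data translate faithfully into one another, so the two equivalence relations match and $\Phi$ induces a bijection at the level of classes. I do not anticipate any real obstacle here; the only thing to verify is the functoriality with respect to isomorphisms, which is immediate from the construction. Alternatively, the same identity can be extracted by induction on $n$ from equation (\ref{EQ2}), since the coefficient of $X^n$ in $T_{D,T}(X)$ is $D$ times the coefficient of $X^{n-1}$ in $\prod_{k<n}(1-X^k)^{-T\cdot t_{D,T}(k)}$ and therefore only depends on $t_{D,T}(k)$ for $k < n$, but the bijective argument is more informative.
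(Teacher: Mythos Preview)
Your bijective argument is correct and genuinely different from the paper's proof. The paper proceeds by induction on $n$ using the functional equation~(\ref{EQ2}): assuming $T\cdot t_{D,T}(k)=t_{TD,1}(k)$ for all $k<n$, the coefficient of $X^n$ in $TDX\prod_{k<n}(1-X^k)^{-T\,t_{D,T}(k)}$ equals the coefficient of $X^n$ in $TDX\prod_{k<n}(1-X^k)^{-t_{TD,1}(k)}$, which is $t_{TD,1}(n)$; but the former is also $T\,t_{D,T}(n)$. This is exactly the alternative you sketch at the end of your proposal.

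Your main argument instead builds an explicit bijection $\bfT_{\D,\T}(n)\times\T\to\bfT_{\T\times\D}(n)$ by pushing each edge type onto the decoration of its upper endpoint and using the extra element of $\T$ to complete the root's decoration. The check that this descends to isomorphism classes is straightforward, as you note: rooted-tree isomorphisms fix the root and carry the edge $e_v$ to $e_{\phi(v)}$, so the two kinds of structure-preservation translate into one another. The bijection buys an explanation of \emph{why} the factor $T$ appears (the root is the only vertex without an incoming edge to absorb), whereas the paper's generating-function argument is shorter and requires no case analysis on the root. Both are valid; yours is more informative combinatorially, the paper's more economical.
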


\begin{proof} By induction on $n$. If $n=1$, $t_{D,T}(1)=D$ and $t_{TD,1}=TD$, which gives the result. 
Let us assume the result at all ranks $k<n$.
Then $t_{D,T}(n)T$ is the coefficient of $X^n$ in:
\[TDX\prod_{k=1}^{n-1}\frac{1}{(1-X^k)^{t_{D,T}(k)T}}=TDX\prod_{k=1}^{n-1}\frac{1}{(1-X^k)^{t_{TD,1}(k)}},\]
which is precisely $t_{TD,1}(n)$. \end{proof}

\begin{example}
We obtain:
\begin{align*}
t_{D,T}(1)&=D,\\
t_{D,T}(2)&=D^2T,\\
t_{D,T}(3)&=\frac{D^2T(3DT+1)}{2},\\
t_{D,T}(4)&=\frac{D^2T(8D^2T^2+3DT+1)}{3},\\
t_{D,T}(5)&=\frac{D^2T(125D^3T^3+54D^2T^2+31DT+6)}{24},\\
t_{D,T}(6)&=\frac{D^2T(162D^4T^4+80D^3T^3+45D^2T^2+10DT+3)}{15},\\
t_{D,T}(7)&=\frac{D^2T(16807D^5T^5+9375D^4T^4+5395D^3T^3+2025D^2T^2+838DT+120)}{720}
\end{align*}
Specializing, we find the following sequences of the OEIS \cite{Sloane}:
\[\begin{array}{c||c|c|c|c}
T\setminus D&1&2&3&4\\
\hline\hline  1&A0081&A038055&A038059&A136793\\
2&A00151&A136794\\
3&A006964\\
4&A052763\\
5&A052788\\
6&A246235\\
7&A246236\\
8&A246237\\
9&A246238\\
10&A246239
\end{array}\]\end{example}

We shall give tables of values of $t_{D,T}(k)$ in the appendix.

\section{Multiple pre-Lie algebras}

We here fix a nonempty set $\T$ of types of edges.

\subsection{Definition}

\begin{defi} \label{defi3}
A $\T$-multiple pre-Lie algebra is a family $(V,(\bullet_t)_{t\in \T})$, 
where $V$ is a vector space and for all $t\in \T$,
$\bullet_t$ is a bilinear product on $V$ such that:
\begin{align*}
&\forall t,t'\in \T,\:\forall x,y,z\in V,& 
x\bullet_{t'}(y \bullet_t z)-(x\bullet_{t'} y)\bullet_t z&=x\bullet_t(z \bullet_{t'} y)-(x\bullet_t z)\bullet_{t'} y.
\end{align*}
\end{defi}

\begin{remark}
For any $t\in \T$, $(V,\bullet_t)$ is a pre-Lie algebra. 
More generally, for any family $\lambda=(\lambda_t)_{t\in \T}\in \K^{(\T)}$,
putting $\bullet_\lambda=\sum \lambda_t \bullet_t$, $(V,\bullet_\lambda)$ is a pre-Lie algebra.
\end{remark}

\begin{prop}
Let $\D$ be any set; we denote by $\g_{\D,\T}$ the vector space generated by $\bfT_{\D,\T}$. 
For any $T,T'\in \bfT_{\D,\T}$,
$v\in V(T)$ and $t\in \T$, we denote by $T\bullet_t^{(v)} T'$ the $\D$-decorated $\T$-typed tree 
obtained by grafting $T'$ on $v$, the created edge being of type $t$. 
We then define a product $\bullet_t$ on $\g_{\D,\T}$ by:
\begin{align*}
&\forall T,T'\in \bfT_{\D,\T},&T\bullet_t T'&=\sum_{v\in V(T)} T\bullet_t^{(v)} T'.
\end{align*}
Then $(\g_{\D,\T},(\bullet_t)_{t\in \T})$ is a $\T$-multiple pre-Lie algebra.
\end{prop}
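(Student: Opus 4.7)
The plan is to reduce to tree generators by bilinearity and then to expand both sides of the identity defining a $\T$-multiple prelie algebra directly from the formula $T \bullet_t T' = \sum_{v \in V(T)} T \bullet_t^{(v)} T'$. The key observation is that the difference $x \bullet_{t'}(y \bullet_t z) - (x \bullet_{t'} y) \bullet_t z$ should turn out to be invariant under the simultaneous swap $(t,y) \leftrightarrow (t',z)$, which is exactly what the identity of Definition \ref{defi3} asserts.

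Concretely, for tree generators $x,y,z \in \bfT_{\D,\T}$, I would first expand
$$x \bullet_{t'}(y \bullet_t z) = \sum_{v\in V(x)}\sum_{w\in V(y)} (x\bullet_{t'}^{(v)}y)\bullet_t^{(w)}z,$$
each summand being the tree obtained from $x$ by grafting a copy of $y$ at $v$ via an edge of type $t'$ and, inside this $y$-part, grafting $z$ at $w$ via an edge of type $t$. For the second term, the key step is to use $V(x\bullet_{t'}^{(v)}y) = V(x)\sqcup V(y)$ and split the outer sum according to whether the attaching vertex of $z$ lies in $V(y)$ or in $V(x)$:
$$(x\bullet_{t'}y)\bullet_t z = \sum_{v\in V(x)}\sum_{w\in V(y)} (x\bullet_{t'}^{(v)}y)\bullet_t^{(w)}z + \sum_{v,u\in V(x)} (x\bullet_{t'}^{(v)}y)\bullet_t^{(u)}z.$$
The first double sum coincides with the expansion of $x\bullet_{t'}(y\bullet_t z)$, leaving
$$x \bullet_{t'}(y \bullet_t z) - (x\bullet_{t'}y)\bullet_t z = -\sum_{v,u\in V(x)} (x\bullet_{t'}^{(v)}y)\bullet_t^{(u)}z.$$

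Each remaining summand $(x\bullet_{t'}^{(v)}y)\bullet_t^{(u)}z$, with $v,u\in V(x)$, is the tree obtained from $x$ by two independent graftings onto vertices of $x$ itself: a copy of $y$ at $v$ via an edge of type $t'$, and a copy of $z$ at $u$ via an edge of type $t$; even when $v=u$, the two graftings produce two distinct edges out of that vertex and the result is unchanged under the simultaneous swap $(v,t',y) \leftrightarrow (u,t,z)$. Reindexing then identifies the above sum with the analogous one obtained by expanding $x\bullet_t(z\bullet_{t'}y) - (x\bullet_t z)\bullet_{t'}y$, which yields the required identity. The only delicate point is the correct identification of $V(x\bullet_{t'}^{(v)}y)$ and the careful split of the outer sum; beyond this, the argument is a direct adaptation of the Chapoton--Livernet proof of the prelie identity for rooted trees, the edge-type labels $t,t'$ playing a purely passive role.
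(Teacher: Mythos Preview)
Your proposal is correct and follows essentially the same approach as the paper: expand both associators over grafting vertices, split the outer sum for $(x\bullet_{t'}y)\bullet_t z$ according to whether the second grafting vertex lies in $V(x)$ or in $V(y)$, cancel the $V(y)$-part against $x\bullet_{t'}(y\bullet_t z)$, and observe that the remaining double sum over $V(x)\times V(x)$ is symmetric under $(v,t',y)\leftrightarrow(u,t,z)$. The paper carries this out with the opposite sign convention (computing $(x\bullet_{t'}y)\bullet_t z - x\bullet_{t'}(y\bullet_t z)$ rather than its negative), but the argument is otherwise identical.
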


\begin{proof} Let $T,T',T''$ be elements of $\bfT_{\D,\T}$ and $t',t''\in \T$.
\begin{align*}
&(T\bullet_{t'} T')\bullet_{t''} T''-T\bullet_{t'}(T'\bullet_{t''}T'')\\
&=\sum_{v\in Vert(T), v'\in Vert(T)\sqcup Vert(T')} (T\bullet_{t'}^{(v)} T')\bullet_{t''}^{(v')} T''
-\sum_{v\in Vert(T),v'\in Vert(T')}T\bullet_{t'}^{(v)}(T'\bullet_{t''}^{(v')} T'')\\
&=\sum_{v\in Vert(T), v'\in Vert(T)} (T\bullet_{t'}^{(v)} T')\bullet_{t''}^{(v')} T''\\
&+\sum_{v\in Vert(T),v'\in Vert(T')}(T\bullet_{t'}^{(v)} T')\bullet_{t''}^{(v')} T''
-T\bullet_{t'}^{(v)}(T'\bullet_{t''}^{(v')} T'')\\
&=\sum_{v\in Vert(T), v'\in Vert(T)} (T\bullet_{t'}^{(v)} T')\bullet_{t''}^{(v')} T''\\
&=\sum_{v\in Vert(T), v'\in Vert(T)} (T\bullet_{t''}^{(v')} T'')\bullet_{t'}^{(v)} T'\\
&=(T\bullet_{t''} T'')\bullet_{t'} T'-T\bullet_{t''}(T''\bullet_{t'}T').
\end{align*}
So $\g_{\D,\T}$ is indeed a $\T$-multiple pre-Lie algebra.
\end{proof}

\begin{example} If $a$,$b$, $c\in \D$ and $\typ{10}$, $\typ{2} \in \T$:
\begin{align*}
\tddeux{$a$}{$b$}{10}\bullet_{\typ{10}} \tdun{$c$}&=
\tdtroisun{$a$}{$c$}{$b$}{10}{10}+\tdtroisdeux{$a$}{$b$}{$c$}{10}{10},&
\tddeux{$a$}{$b$}{10}\bullet_{\typ{2}} \tdun{$c$}&=
\tdtroisun{$a$}{$c$}{$b$}{10}{2}+\tdtroisdeux{$a$}{$b$}{$c$}{10}{2}.
\end{align*}\end{example}

\subsection{Guin-Oudom extension of the pre-Lie products}

\begin{notation}
Let $V$ be a vector space. We denote 
\[V^{\oplus \T}=\bigoplus_{t\in \T} V\delta_t.\]
\end{notation}

\begin{lemma}
If for any $t\in \T$, $\bullet_t$ is a bilinear product on a vector space $V$, 
we define $\bullet:(V^{\oplus \T})^{\otimes 2}\longrightarrow V^{\oplus \T}$ by:
\[x\delta t \bullet x'\delta_{t'}=(x\bullet_{t'} y)\delta_t.\]
Then $(V,(\bullet_t)_{t\in \T})$ is a $\T$-multiple pre-Lie algebra if, 
and only if, $(V^{\oplus \T},\bullet)$ is a pre-Lie algebra.
\end{lemma}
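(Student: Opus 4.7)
The plan is to verify the equivalence by direct computation, showing that the prelie associator of $\bullet$ on $V^{\oplus\T}$, evaluated on pure tensors $x\delta_t$, $y\delta_{t'}$, $z\delta_{t''}$, recovers exactly the defining identity of a $\T$-multiple prelie algebra.

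First I would fix arbitrary $x,y,z\in V$ and $t,t',t''\in\T$, and compute the two nested products in the prelie associator. Using the definition $x\delta_t\bullet y\delta_{t'}=(x\bullet_{t'}y)\delta_t$, one gets
\begin{align*}
(x\delta_t\bullet y\delta_{t'})\bullet z\delta_{t''}&=\bigl((x\bullet_{t'}y)\bullet_{t''}z\bigr)\delta_t,\\
x\delta_t\bullet(y\delta_{t'}\bullet z\delta_{t''})&=\bigl(x\bullet_{t'}(y\bullet_{t''}z)\bigr)\delta_t.
\end{align*}
Thus the left associator equals $\bigl((x\bullet_{t'}y)\bullet_{t''}z-x\bullet_{t'}(y\bullet_{t''}z)\bigr)\delta_t$, and swapping the roles of $y\delta_{t'}$ and $z\delta_{t''}$ gives the right associator $\bigl((x\bullet_{t''}z)\bullet_{t'}y-x\bullet_{t''}(z\bullet_{t'}y)\bigr)\delta_t$.

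Equating these two expressions (which is precisely the prelie identity for $\bullet$ on pure tensors of the form $x\delta_t,y\delta_{t'},z\delta_{t''}$) yields, after cancelling the common factor $\delta_t$, exactly the $\T$-multiple prelie identity of Definition \ref{defi3}. Since every element of $V^{\oplus\T}$ is a finite sum of pure tensors and the associator is trilinear, the prelie identity holds on all of $V^{\oplus\T}$ if and only if it holds on pure tensors, so the equivalence follows.

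There is no real obstacle here: the argument is a bookkeeping exercise once one carefully tracks which index plays the role of the "decoration on the root" (which stays as $\delta_t$) and which index plays the role of the "grafting type" (which is the subscript on $\bullet_{t'}$ or $\bullet_{t''}$). The only thing to be careful about is that in the definition $x\delta_t\bullet y\delta_{t'}=(x\bullet_{t'}y)\delta_t$, the surviving tag is $t$ (not $t'$), which is what ensures both sides of the associator live in the same summand $V\delta_t$ and can therefore be compared.
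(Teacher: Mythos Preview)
Your proof is correct and follows the same approach as the paper: compute the prelie associator of $\bullet$ on pure tensors $x\delta_t,\,y\delta_{t'},\,z\delta_{t''}$ and observe that it lands in $V\delta_t$ with coefficient exactly the $\T$-multiple prelie associator. The paper's proof is terser (it records only one side of the associator and omits the trilinearity remark), but the argument is identical.
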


\begin{proof} Let $x,x',x''\in V$, $t,t',t''\in \T$. Then:
\begin{align*}
&x\delta_t\bullet (x'\delta_{t'}\bullet x''\delta_{t''})-(x\delta_t\bullet x'\delta_{t'})\bullet x''\delta_{t''}
=\left((x\bullet_{t'} x')\bullet_{t''} x''-x\bullet_{t'}(x'\bullet_{t''}x'')\right)\delta_t,
\end{align*}
which implies the result. \end{proof}

\begin{notation}
The symmetric algebra $S(V)$ is given its usual coproduct $\Delta$, making it a bialgebra:
\begin{align*}
&\forall x\in V,&\Delta(x)&=x\otimes 1+1\otimes x.
\end{align*}
We shall use Sweedler's notation: for any $w\in S(V)$, $\Delta(w)=\sum w^{(1)}\otimes w^{(2)}$.
\end{notation}

\begin{theo}
Let $V$ be a $\T$-multiple pre-Lie algebra. One can define a product 
\[\bullet:S(V)\otimes S\left(V^{\oplus\T}\right)\longrightarrow S(V)\]
in the following way: for any $u,v \in S(V)$, $w\in S\left(V^{\oplus\T}\right)$, $x\in V$, $t\in \T$,
\begin{align*}
1\bullet w&=\varepsilon(w),\\
u\bullet 1&=u,\\
uv\bullet w&=\sum\left (u\bullet w^{(1)}\right)\left(v\bullet w^{(2)}\right),\\
u\bullet w (x\delta_t)&=(u\bullet w)\bullet_t x-x\bullet (w\bullet_t x),
\end{align*}
where $\bullet_t$ is extended to $S(V)\otimes V$ and $S\left(V^{\oplus\T}\right)\otimes V$ by:
\begin{align*}
&\forall x_1,\ldots,x_k,x\in V,\:t_1,\ldots,t_k \in \T,&x_1\ldots x_k \bullet_t x
&=\sum_{i=1}^k x_1\ldots (x_i\bullet_t x)\ldots x_k,\\
&&(x_1\delta_{t_1})\ldots (x_k\delta_{t_k}) \bullet_t x
&=\sum_{i=1}^k (x_1\delta_{t_1})\ldots ((x_i\bullet_t x)\delta_{t_i})\ldots (x_k\delta_{t_k}).
\end{align*}\end{theo}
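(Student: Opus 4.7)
The plan is to construct $\bullet$ by induction on the length $n$ of the second argument $w\in S(V^{\oplus\T})$, using rule~(4) as the reduction, and then to verify that rules~(1) and~(3) are either degenerate cases or follow as a consequence. The crucial point is that the prescription in~(4) must not depend on which letter $(x\delta_t)$ is singled out as the last factor of $w$.

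Before the induction I would dispatch three preliminaries. First, the Leibniz-type formulas for $\bullet_t$ on $S(V)\otimes V$ and on $S(V^{\oplus\T})\otimes V$ do define maps to $S(V)$ and $S(V^{\oplus\T})$, since the sums $\sum_i x_1\cdots(x_i\bullet_t x)\cdots x_k$ are symmetric in $x_1,\ldots,x_k$ and therefore descend to the symmetric algebra. Second, the $\T$-multiple prelie relation of Definition~\ref{defi3} extends from $V$ to each of these symmetric algebras: developing a monomial $a=v_1\cdots v_k$ by Leibniz, both sides split into a ``mixed'' part where $\bullet_t,\bullet_s$ hit distinct factors (manifestly symmetric under $(x,t)\leftrightarrow(y,s)$) and a ``diagonal'' part acting on a single $v_i$, handled by the axiom in $V$. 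Third, a direct calculation shows the coproduct compatibility
\[\Delta(w\bullet_t x)=\sum (w^{(1)}\bullet_t x)\otimes w^{(2)}+\sum w^{(1)}\otimes(w^{(2)}\bullet_t x)\]
on $S(V^{\oplus\T})$.

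With those in hand, I would define $u\bullet w$ by induction on $n$: set $u\bullet 1=u$ for $n=0$ and, for $n\geq 1$, choose any factorisation $w=w'(x\delta_t)$ and put $u\bullet w=(u\bullet w')\bullet_t x-u\bullet(w'\bullet_t x)$, both terms being defined by induction since $w'$ and $w'\bullet_t x$ have length $n-1$. The essential check is independence of the chosen factor. By commutativity of $S(V^{\oplus\T})$, it is enough to compare the two choices when $w=w''(x\delta_t)(y\delta_s)$. Applying~(4) once more to each and using the Leibniz rule for $\bullet_t,\bullet_s$ on $S(V^{\oplus\T})$, the difference separates into
\[\bigl[((u\bullet w'')\bullet_t x)\bullet_s y-(u\bullet w'')\bullet_t(x\bullet_s y)\bigr]-\bigl[((u\bullet w'')\bullet_s y)\bullet_t x-(u\bullet w'')\bullet_s(y\bullet_t x)\bigr],\]
which vanishes by the extended prelie axiom in $S(V)$, plus a term $u\bullet\bigl[(w''\bullet_t x)\bullet_s y-w''\bullet_t(x\bullet_s y)-(w''\bullet_s y)\bullet_t x+w''\bullet_s(y\bullet_t x)\bigr]$ whose bracket vanishes by the extended axiom in $S(V^{\oplus\T})$ (the inductive hypothesis justifying that the outer $u\bullet$ is already well-defined on these shorter arguments).

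Once well-definedness is established, rule~(3) is proved by a further induction on the length of $w$: the case $w=1$ is trivial, and for $w=w'(x\delta_t)$ the inductive hypothesis, combined with the Leibniz rule for $\bullet_t$ on $S(V)$ and with the coproduct identity above, rearranges $(uv\bullet w')\bullet_t x-uv\bullet(w'\bullet_t x)$ exactly as $\sum(u\bullet w^{(1)})(v\bullet w^{(2)})$, using $\Delta(w)=\Delta(w')\cdot(x\delta_t\otimes 1+1\otimes x\delta_t)$. Rule~(1) is a short verification by degree, relying on the convention that an empty Leibniz sum is zero. The main obstacle is the symmetry check in the inductive construction: one needs the $\T$-multiple prelie axiom (rather than a separate prelie axiom for each $\bullet_t$) simultaneously in $S(V)$ and in $S(V^{\oplus\T})$ to absorb all cross-terms when $t\neq s$.
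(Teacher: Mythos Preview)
Your argument is correct and self-contained: the inductive construction on the length of $w$, together with the symmetry check using the extended $\T$-multiple prelie axiom in both $S(V)$ and $S(V^{\oplus\T})$, does establish well-definedness, and your derivations of rules~(1) and~(3) go through as written.

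The paper takes a different route. Rather than building $\bullet$ from scratch, it first observes (Lemma~5) that $V^{\oplus\T}$ is an ordinary prelie algebra, applies the Guin--Oudom construction to it as a black box to obtain a product on $S(V^{\oplus\T})$, and then shows that this product descends along a surjection $F:S(V^{\oplus\T})\to S(V)$ (depending on an auxiliary choice $f:\T\to\K$) by checking that $\mathrm{Ker}(F)$ is stable under right $\bullet$-action. So the well-definedness of the recursion is inherited for free from the classical prelie case, and the only new work is the stability of the kernel. Your approach is more elementary and does not invoke Guin--Oudom as a prerequisite; in effect you are reproving the relevant portion of Guin--Oudom in this typed setting. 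The paper's approach is shorter once the classical result is taken for granted, and makes the reduction to the single-prelie case explicit, at the cost of the somewhat artificial auxiliary map $f$ and a hand-waved ``it does not depend on the choice of $f$''. Both arguments ultimately rest on the same multiple prelie identity, just organised differently.
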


\begin{proof} \textit{Uniqueness}. The last formula allows to compute $x\bullet w$ for any $x\in V$ 
and $w\in S\left(V^{\oplus\T}\right)$ by induction on the length of $w$;
the other ones allow to compute $u\bullet w$ for any $u\in S(V)$ by induction on the length on $u$. 
So this product $\bullet$ is unique.\\

\textit{Existence}. Let us use the Guin-Oudom construction \cite{Guin1,Guin2} on the pre-Lie algebra $V^{\otimes \T}$. 
We obtain a product $\bullet$
defined on $S\left(V^{\oplus\T}\right)$ such that for any $u,v,w\in S\left(V^{\oplus\T}\right)$, $x\in V^{\oplus \T}$:
\begin{align*}
1\bullet w&=\varepsilon(w),\\
u\bullet 1&=u,\\
uv\bullet w&=\sum \left(u\bullet w^{(1)}\right)\left(v\bullet w^{(2)}\right),\\
u\bullet w x&=(u\bullet w)\bullet x-x\bullet (w\bullet x).
\end{align*}
Let $f:\T\longrightarrow \K$ be any nonzero map. We consider the surjective algebra morphism 
$F:S\left(V^{\oplus\T}\right)\longrightarrow S(V)$,
sending $x\delta_t$ to $f(t)x$ for any $x\in V$, $t\in \T$. Its kernel is generated by the elements 
$X_{t,t'}x=(f(t')\delta_t-f(t)\delta_{t'})x$,
where $x\in V$ and $t,t'\in \T$. We denote by $J$ the vector space generated by the elements $X_{t,t'}x$.
Let us prove that for any $w\in S\left(V^{\oplus\T}\right)$, $J\bullet w\subseteq J$ by induction on the length $n$ of $w$.
If $n=0$, we can assume that $w=1$ and this is obvious. If $n=1$, we can assume that $w=x'\delta_{t''}$. Then:
\begin{align*}
X_{t,t'}x\bullet w&=(f(t')\delta_t-f(t)\delta_{t'}) x\bullet_{t''} x'=X_{t,'t'}x\bullet_{t''} x' \in J.'
\end{align*}
Let us assume the result at rank $n-1$. We can assume that $w=w' x'\delta_t$, the length of $w'$ being $n-1$. 
For any $x\in J$:
\[x\bullet w=(x\bullet w')\bullet x'-x\bullet(w'\bullet x').\]
The length of $w'$ and $w'\bullet x'$ is $n-1$, so $x\bullet w'$ and $x\bullet (w'\bullet x')$ belong to $J$. 
From the case $n=1$,
$(x\bullet w')\bullet x'\in J$, so $x\bullet w\in J$.\\

For any $x\in J$, $u,v\in S\left(V^{\oplus\T}\right)$:
\[xu\bullet v=\underbrace{\left(x\bullet v^{(1)}\right)}_{\in J} \left(u\bullet v^{(2)}\right)  \in \ker(F).\]
This proves that $\ker(F)\bullet S\left(V^{\oplus\T}\right)\subseteq \ker(F)$. Hence, $\bullet$ induces a product also denoted by $\bullet$,
defined from $S(V)\otimes S(V^{\otimes \T})$ to $S(V)$. It is not difficult to show that it does not depend on the choice of $f$
and satisfies the required properties. \end{proof}

\begin{defi}
Let $d\in \D$, $T_1,\ldots,T_k\in \bfT_{\D,\T}$, $t_1,\ldots,t_k\in \T$. We denote by
\[B_d\left(\prod_{i\in [k]} T_i\delta_{t_i}\right)\]
the $\T$-typed $\D$-decorated tree obtained by grafting $T_1,\ldots,T_k$ on a common root decorated by $d$,
the edge between this root and the root of $T_i$ being of type $t_i$ for any $i$.
This defines a map $B_d:S\left(Vect(\bfT_{\D,\T})^{\oplus \T}\right)\longrightarrow S(Vect(\bfT_{\D,\T}))$.
\end{defi}

\begin{lemma} \label{lemma8}
For any $d\in \D$, $T_1,\ldots,T_k\in \bfT_{\D,\T}$, $t_1,\ldots,t_k\in \T$:
\[B_d\left(\prod_{i\in [k]} T_i\delta_{t_i}\right)=\tdun{$d$}\bullet\prod_{i\in [k]} T_i\delta_{t_i}.\]
\end{lemma}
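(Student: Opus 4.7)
The plan is to argue by induction on $k$. For the base case $k=0$, the axiom $u\bullet 1=u$ gives $\tdun{$d$}\bullet 1=\tdun{$d$}$, and by construction $B_d(1)=\tdun{$d$}$, the tree consisting of a single root decorated by $d$.

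For the inductive step ($k\geq 1$), I would write $w=\prod_{i=1}^{k-1}T_i\delta_{t_i}$ and apply the recursive axiom defining $\bullet\colon S(V)\otimes S(V^{\oplus\T})\to S(V)$ to peel off the final factor $T_k\delta_{t_k}$, expressing
\[\tdun{$d$}\bullet(w\cdot T_k\delta_{t_k})=(\tdun{$d$}\bullet w)\bullet_{t_k} T_k-\tdun{$d$}\bullet(w\bullet_{t_k}T_k).\]
The induction hypothesis applied to the length-$(k-1)$ word $w$ yields $\tdun{$d$}\bullet w=B_d(w)$. The crucial step is then to expand $B_d(w)\bullet_{t_k}T_k$ as a sum of graftings of $T_k$ at the vertices of $B_d(w)$ with a new edge of type $t_k$. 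The vertex set of $B_d(w)$ splits into the root $r$ (decorated by $d$) and the disjoint union $\bigsqcup_{i=1}^{k-1}V(T_i)$. Grafting at $r$ produces precisely $B_d(w\cdot T_k\delta_{t_k})$, which is the desired right-hand side. Grafting at a vertex $v\in V(T_i)$ replaces the factor $T_i\delta_{t_i}$ in the argument of $B_d$ by $(T_i\bullet_{t_k}^{(v)}T_k)\delta_{t_i}$; summing over all such $(i,v)$, by the very definition of the Leibniz-style extension of $\bullet_{t_k}$ to $S(V^{\oplus\T})\otimes V$, these contributions reassemble into $B_d(w\bullet_{t_k}T_k)$. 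Hence $(\tdun{$d$}\bullet w)\bullet_{t_k}T_k=B_d(w\cdot T_k\delta_{t_k})+B_d(w\bullet_{t_k}T_k)$.

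To close the loop, I would observe that $w\bullet_{t_k}T_k$ is a linear combination of monomials each of length $k-1$ in $S(V^{\oplus\T})$. Linearity of $B_d$ and of $\bullet$, combined with the induction hypothesis applied termwise, then give $\tdun{$d$}\bullet(w\bullet_{t_k}T_k)=B_d(w\bullet_{t_k}T_k)$. Substituting back into the recursion, the two $B_d(w\bullet_{t_k}T_k)$ contributions cancel, leaving the required equality $\tdun{$d$}\bullet(w\cdot T_k\delta_{t_k})=B_d(w\cdot T_k\delta_{t_k})$.

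The main obstacle is the bookkeeping in the vertex-by-vertex decomposition of $B_d(w)\bullet_{t_k}T_k$: one must keep carefully distinct the two uses of the symbol $\bullet_{t_k}$, namely the multiple prelie product on $\g_{\D,\T}$ itself and its Leibniz extension to $S(V^{\oplus\T})\otimes V$, and recognize that the non-root contributions reassemble, by the definition of this extension, into $B_d$ applied to $w\bullet_{t_k}T_k$. Once this identification is set up correctly, the cancellation against the counterterm coming from the defining recursion is automatic, and the induction closes.
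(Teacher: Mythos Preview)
Your proposal is correct and follows essentially the same inductive argument as the paper: peel off the last factor via the defining recursion, apply the induction hypothesis to $w$ and to $w\bullet_{t_k}T_k$, and use the decomposition $B_d(w)\bullet_{t_k}T_k=B_d(w\cdot T_k\delta_{t_k})+B_d(w\bullet_{t_k}T_k)$ to obtain the cancellation. Your write-up spells out that decomposition (root versus non-root graftings) more explicitly than the paper does, but the structure of the proof is identical.
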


\begin{proof} We write $\displaystyle F=\prod_{i\in [k]} T_i\delta_{t_i}$. We proceed by induction on $k$. 
If $k=0$, then $F=1$ and $\tdun{$d$}\bullet 1=\tdun{$d$}=B_d(1)$.
let us assume the result at rank $k-1$, with $k\geq 1$. We can write $F=F'T\delta_t$, with $length(F')=k-1$, $T=T_k$  and $t=t_k$.
Then:
\begin{align*}
\tdun{$d$}\bullet F&=(\tdun{$d$}\bullet F')\bullet T\delta_t-\tdun{$d$}\bullet (F'\bullet T\delta_t)\\
&=B_d(F')\bullet_t T-B_d(F'\bullet_t T)\\
&=B_d(F'T\delta_t)+B_d(F'\bullet_t T)-B_d(F'\bullet_t T)\\
&=B_d(F).
\end{align*}
So the result holds for all $k\geq 0$. \end{proof}

\begin{cor} \label{cor9}
Let $A$ be a $\T$-multiple pre-Lie algebra and, for any $d\in \D$, $a_d \in A$. 
There exists a unique $\T$-multiple algebra morphism
$\phi:\g_{\D,\T}\longrightarrow A$, such that for any $d\in \D$, $\phi(\tdun{$d$})=a_d$.
In other words, $\g_{\T,\D}$ is the free $\T$-multiple pre-Lie algebra generated by $\D$.
\end{cor}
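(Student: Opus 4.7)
The plan is to use Lemma \ref{lemma8}, which expresses every tree $T\in \bfT_{\D,\T}$ as $T=B_d\bigl(\prod_{i\in[k]} T_i\delta_{t_i}\bigr)=\tdun{$d$}\bullet \prod_{i\in[k]} T_i\delta_{t_i}$, where $d$ decorates the root, the $T_i$ are the branches hanging from the root, and the $t_i$ are the types of the edges connecting these branches to the root. Since the $T_i$ are strictly smaller than $T$, this gives an inductive description of $\bfT_{\D,\T}$ well-suited for both uniqueness and existence.

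For uniqueness, any $\T$-multiple prelie morphism $\phi$ extending $d\mapsto a_d$ is forced on each tree by the above decomposition: the Guin-Oudom formulas (from the theorem preceding Lemma \ref{lemma8}) recursively express $\tdun{$d$}\bullet \prod_i T_i\delta_{t_i}$ as a polynomial combination of the products $\bullet_{t_i}$ applied to $\tdun{$d$}$ and the $T_i$'s, which reduces everything to the values $\phi(\tdun{$d$})=a_d$.

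For existence, apply the Guin-Oudom theorem to $A$ itself to obtain a product $\bullet\colon S(A)\otimes S(A^{\oplus\T})\to S(A)$. This product preserves the symmetric-algebra degree of the left argument, so restricts to $A\otimes S(A^{\oplus\T})\to A$. Define $\phi:\g_{\D,\T}\to A$ recursively on $|V(T)|$ by $\phi(\tdun{$d$})=a_d$ and
\[
\phi\!\left(B_d\!\left(\prod_{i\in[k]} T_i\delta_{t_i}\right)\right)=a_d\bullet \prod_{i\in[k]} \phi(T_i)\delta_{t_i}.
\]

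The main step is then to verify the morphism property $\phi(T\bullet_t T')=\phi(T)\bullet_t\phi(T')$ for all $T,T'\in \bfT_{\D,\T}$ and all $t\in \T$, by induction on $|V(T)|$. For $T=\tdun{$d$}$ this is immediate, as $T\bullet_t T'=B_d(T'\delta_t)$. For the inductive step, write $T=B_d(F)$ with $F=\prod_i T_i\delta_{t_i}$; the sum $T\bullet_t T'=\sum_{v\in V(T)}T\bullet_t^{(v)}T'$ splits into the grafting at the root, giving $B_d(F\cdot T'\delta_t)$, and the graftings at vertices internal to some $T_i$, giving $B_d(F\bullet_t T')$ where $\bullet_t$ acts as a derivation on $F$. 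Applying $\phi$ to each piece, using the inductive hypothesis on the branches $T_i$, and invoking the Guin-Oudom relation in $A$, namely $a_d\bullet(F'\cdot \phi(T')\delta_t)=(a_d\bullet F')\bullet_t\phi(T')-a_d\bullet(F'\bullet_t\phi(T'))$ for $F'=\prod_i\phi(T_i)\delta_{t_i}$, the two contributions recombine into $(a_d\bullet F')\bullet_t\phi(T')=\phi(T)\bullet_t\phi(T')$. The main obstacle, and what makes everything work, is aligning the root-versus-non-root split of the grafting on the tree side with the Guin-Oudom defining relation on the algebra side: this is essentially the content of Lemma \ref{lemma8}, transported to $A$.
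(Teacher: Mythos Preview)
Your proposal is correct and follows essentially the same route as the paper: define $\phi$ recursively via $\phi(B_d(\prod_i T_i\delta_{t_i}))=a_d\bullet\prod_i\phi(T_i)\delta_{t_i}$ using the Guin-Oudom extension on $A$, and then prove $\phi(T\bullet_t T')=\phi(T)\bullet_t\phi(T')$ by induction on $|V(T)|$, splitting the grafting into the root contribution and the branch contributions and recombining them with the Guin-Oudom relation. The paper's argument is line-for-line the one you outline.
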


\begin{proof} \textit{Uniqueness}. Using the Guin-Oudom product and lemma \ref{lemma8}, 
$\phi$ is the unique linear map inductively defined by:
\[\phi\left(B_d\left(\prod_{i\in[k]}T_i \delta_{t_i}\right)\right)
=a_d \bullet \prod_{i\in [k]} \phi(T_i) \delta_{t_i}.\]

\textit{Existence}. Let $T,T'\in \bfT_{\D,\T}$ and $t\in \T$. Let us prove that $\phi(T\bullet_t T')=\phi(T)\bullet_t \phi(T')$
by induction on $n=|T|$. If $n=1$, we assume that $T=\tdun{$d$}$. Then $T\bullet_t T'=B_d(T'\delta_t)$, so:
\begin{align*}
\phi(T\bullet_t T')&=a_d\bullet (\phi(T')\delta_t=a_d \bullet_t \phi(T')=\phi(T)\bullet_t \phi(T').
\end{align*}
Let us assume the result at all ranks $<|T|$. We put:
\[T=B_d\left(\prod_{i=1}^k T_i\delta_{t_i}\right).\]
By definition of the pre-Lie product of $\g_{\D,\T}$ in terms of grafting:
\begin{align*}
T\bullet T'&=B_d\left(\prod_{i=1}^k T_i \delta_{t_i} T'\delta_t\right)+\sum_{j=1}^k B_d\left(\prod_{i\neq j} T_i\delta_{t_i} (T_j\bullet_t T')\delta_{t_j}\right),\\
\phi(T\bullet T')&=a_d\bullet \prod_{i=1}^k \phi(T_i)\delta_{t_i}\phi(T')\delta_t
+\sum_{j=1}^k a_d\bullet \prod_{i\neq j} \phi(T_i)\delta_{t_i} (\phi(T_j\bullet_t T'))\delta_{t_j}\\
&=a_d\bullet \prod_{i=1}^k \phi(T_i)\delta_{t_i}\phi(T')\delta_t
+\sum_{j=1}^k a_d\bullet \prod_{i\neq j} \phi(T_i)\delta_{t_i} (\phi(T_j)\bullet_t \phi(T'))\delta_{t_j}\\
&=a_d\bullet \prod_{i=1}^k \phi(T_i)\delta_{t_i}\phi(T')\delta_t+a_d\bullet\left(\left(\prod_{i=1}^k 
\phi(T_i)\delta_{t_i}\right)\bullet \phi(T')\delta_t\right)\\
&=\left(a_d\bullet\prod_{i=1}^k \phi(T_i)\delta_{t_i} \right)\bullet \phi(T')\delta_t\\
&=\phi(T)\bullet_t \phi(T').
\end{align*}
So $\phi$ is a $\T$-multiple pre-Lie algebra morphism. \end{proof}

\subsection{Operad of typed trees}

We now describe an operad of typed trees, in the category of species. 
We refer to \cite{Dotsenko,Vallette,Markl} for notations and definitions on operads.

\begin{notation} Let $A$ be a finite set.  If $T\in \bfT_\T(A)$ and $a\in T$:
\begin{enumerate}
\item The subtrees formed by the connected components of the set of vertices, descendants of $a$ ($a$ excluded)
are denoted by $T_1^{(a)},\ldots,T_{n_a}^{(a)}$. 
The type of the edge from $a$ to the root of $T_i^{(a)}$ is denoted by $t_i$.
\item The tree formed by the vertices of $T$ which are not in $T_1^{(a)},\ldots,T_{n_a}^{(a)}$, 
at the exception of $a$, is denoted by $T_0^{(a)}$.
\end{enumerate}\end{notation}

\begin{prop}
For any nonempty finite set $A$, we denote by $\P_\T(A)$ the vector space generated by $\bfT_\T(A)$.
We define a composition $\circ$ on $\P_\T$ in the following way: for any $T\in \bfT_\T(A)$, 
$T'\in \bfT_\T(B)$ and $a\in A$,
\[T\circ_a T'=\sum_{v_1,\ldots,v_{n_a}\in V(T')}
\left(\ldots\left (\left(T_0^{(a)}\bullet_\lambda^{(t_0)} T'\right)\bullet_{v_1}^{(t_1)} T_1^{(a)}\right)\ldots\right)
\bullet_{v_{n_a}}^{(t_{n_a}^{(a)})} T_{n_a}^{(a)}.\]
With this composition, $\P_\T$ is an operad in the category of species.
\end{prop}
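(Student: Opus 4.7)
The plan is to verify, in turn, the three defining properties of an operad in the category of species: well-definedness and equivariance of the composition, the identity axioms, and associativity. First I would check that $T \circ_a T'$ is well-defined. The formula iteratively grafts the subtrees $T_1^{(a)},\ldots,T_{n_a}^{(a)}$ onto chosen vertices of $T'$, after initially implanting $T'$ in $T_0^{(a)}$ at the position vacated by $a$. The notation suggests a fixed order of grafting the $T_i^{(a)}$; one must check that the resulting sum does not depend on this order. Since grafting at two distinct target vertices $v_i, v_j \in V(T')$ is a purely local operation (only the immediate neighborhood of the target vertex is altered), the two graftings commute, so the summand depends only on the unordered family $\{(v_i, T_i^{(a)})\}_i$. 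Species equivariance under bijections of the label sets $A$ and $B$ is immediate from the definitions.

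Next, the identity. I would take $I_* \in \P_\T(\{*\})$ to be the single-vertex tree $\tdun{$*$}$. For $T \circ_a I_*$, the only vertex of $V(I_*)$ is $*$, so the sum collapses to a single term in which each subtree $T_j^{(a)}$ is reattached at $*$ with its original edge type; this reassembles $T$ with $a$ relabeled to $*$. For $I_* \circ_* T'$, we have $T_0^{(*)} = \emptyset$ and $n_* = 0$, so the formula collapses to $T'$ itself.

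The main obstacle is associativity, which splits into a parallel case ($a \neq b$, both in $V(T)$) and a sequential case ($a \in V(T)$, $b \in V(T')$). The parallel case reduces to the observation that grafting subtrees at two distinct vertices of $T$ commutes: both $(T \circ_a T') \circ_b T''$ and $(T \circ_b T'') \circ_a T'$ expand as sums over independent choices of grafting positions in $V(T')$ and $V(T'')$, producing identical families of trees. The sequential case is more delicate. I would proceed by induction on $|V(T)|$: the base case $T = \tdun{$a$}$ reduces the identity to a statement about the Guin--Oudom product $\bullet$ on $S(\g_{\D,\T}^{\oplus \T})$, whose associativity is already in hand; the inductive step uses the decomposition $T = B_d\!\left(\prod_i T_i \delta_{t_i}\right)$ from Lemma~\ref{lemma8}, together with careful tracking of where each subtree $T_j^{(a)}$ ends up on either side of the equation. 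Alternatively, and perhaps most cleanly, one may invoke Corollary~\ref{cor9}: since $\g_{\D,\T}$ is the free $\T$-multiple prelie algebra generated by $\D$, the composition on trees is forced to agree with operadic composition in the operad governing $\T$-multiple prelie algebras, for which the axioms are automatic from the universal property.
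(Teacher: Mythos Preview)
Your treatment of well-definedness, equivariance, the identity axiom, and the parallel associativity case is essentially what the paper does: the paper introduces the shorthand $T\bullet_a^{(v)}T'$ for the individual summand (with $v=(v_1,\ldots,v_{n_a})$), notes it is independent of the indexing of the branches, and then verifies parallel associativity by observing that the two graftings at $a'$ and $a''$ commute termwise.

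The divergence is in the sequential case. The paper does \emph{not} induct on $|V(T)|$ nor invoke Corollary~\ref{cor9}; it proceeds by the same direct combinatorial bookkeeping as in the parallel case. Both $(T\circ_{a'}T')\circ_{b''}T''$ and $T\circ_{a'}(T'\circ_{b''}T'')$ expand as sums of trees of the form $T\bullet_{a'}^{(v')}(T'\bullet_{b''}^{(v'')}T'')$, and one checks that the two index sets for $(v',v'')$ are in bijection. The only subtlety (which the paper passes over quickly) is that on the left the branches of $a'$ landing at $b''$ become branches of $b''$ and are then redistributed into $V(T'')$, while on the right those same branches go directly to $V(T'')\subset V(T'\bullet_{b''}^{(v'')}T'')$; these two descriptions match. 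This is elementary and avoids any recursion.

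Your two proposed alternatives both have issues. The induction on $|V(T)|$ is workable but heavier than needed, and your base case is off: when $T=\tdun{$a$}$ one has $T\circ_a T'=T'$, so the sequential identity is trivial and has nothing to do with the Guin--Oudom product. More seriously, the shortcut via Corollary~\ref{cor9} is circular in the paper's logical order: the identification of $\P_\T$ with the operad of $\T$-multiple prelie algebras is Theorem~\ref{theo11}, whose proof explicitly uses that $\P_\T$ is already known to be an operad (it speaks of the free $\P_\T$-algebra). To run your alternative soundly you would have to first compute the operadic composition in the abstract $\T$-multiple prelie operad on the tree basis supplied by Corollary~\ref{cor9} and check it agrees with the formula here---which is precisely the content of the direct proof.
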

 
\begin{proof} Note that the tree 
$\left(\ldots \left(\left(T_0^{(a)}\bullet_\lambda^{(t_0)} T'\right)\bullet_{v_1}^{(t_1)} T_1^{(a)}\right)\ldots\right)
\bullet_{v_{n_a}}^{(t_{n_a}^{(a)})} T_{n_a}$,
which is shortly denote by $T\bullet_\lambda^{(v)} T'$, is obtained in the following process:
\begin{enumerate}
\item Delete the branches $T_1^{(a)},\ldots,T_{n_a}^{(a)}$ coming from $a$ in $T$. 
One obtains a tree $T''$, and $a$ is a leaf of $T''$.
\item Identify $a\in V(T'')$ with the root of $T'$.
\item Graft $T_1^{(a)}$ on $v_1$,$\ldots$, $T_{n_a}^{(a)}$ on $v_{n_a}$.
\end{enumerate}
This obviously does not depend on the choice of the indexation of $T_1^{(a)},\ldots,T_{n_a}^{(a)}$.\\

Let $T\in \bfT_\T(A)$, $T'\in \bfT_\T(B)$, $T''\in \bfT_\T(C)$.
\begin{itemize}
\item If $a',a''\in A$, with $a'\neq a''$, then:
\begin{align*}
(T\circ_{a'} T')\circ_{a''}T''&=\sum_{v'\in V(T')^{n_{a'}},v''\in V(T'')^{n_{a''}}}
\left(T\bullet_{a'}^{(v')}T'\right)\bullet_{a''}^{(v'')} T''\\
&=\sum_{v'\in V(T')^{n_{a'}},v''\in V(T'')^{n_{a''}}}\left(T\bullet_{a''}^{(v'')}T''\right)\bullet_{a'}^{(v')} T'\\
&=(T\circ_{a''} T'')\circ_{a'}T'.
\end{align*}
\item If $a'\in A$ and $b''\in B$, then:
\begin{align*}
(T\circ_{a'}T')\circ_{b''}T''&=\sum_{v'\in V(T')^{n_{a'}},v''\in V(T'')^{n_{b''}}}
\left(T\bullet_{a'}^{(v')}T'\right)\bullet_{b''}^{(v'')} T''\\
&=\sum_{v'\in V(T')^{n_{a'}},v''\in V(T'')^{n_{b''}}}T\bullet_{a'}^{(v')}\left(T'\bullet_{b''}^{(v'')} T''\right)\\
&=T\circ_{a'}(T'\circ_{b''}T'').
\end{align*}
\end{itemize} 
Moreover, $\tdun{$a$}\bullet_\lambda T=T$ for any tree $T$, and if $a\in V(T)$, $T\bullet_\lambda \tdun{$a$}T$.
So $\P_\bfT$ is indeed an operad in the category of species. \end{proof}

Consequently, the family $(\P_\T(n))_{n\geq 0}$ is a "classical" operad, which we denote by $\P_\T$.\\

\begin{example}
\begin{align*}
\tddeux{$1$}{$2$}{10}\circ_1\tddeux{$1$}{$2$}{2}&=\tdtroisun{$1$}{$3$}{$2$}{2}{10}
+\tdtroisdeux{$1$}{$2$}{$3$}{2}{10},&
\tddeux{$1$}{$2$}{2}\circ_2\tddeux{$1$}{$2$}{10}&=\tdtroisdeux{$1$}{$2$}{$3$}{2}{10}.
\end{align*}
\end{example}

\begin{remark} 
Another operad on typed trees is introduced in \cite{Dotsenko2}.
It is a typed version of the operad of nonassociative, permutative operad of \cite{Livernet},
and is different from ours.
\end{remark}

In the non-typed case, this theorem is proved in \cite{ChapotonLivernet}:

\begin{theo} \label{theo11}
The operad of $\T$-multiple pre-Lie algebras is isomorphic to $\P_\T$, 
via the isomorphism $\Phi$ sending, for any $t\in \T$, $\bullet_t$ to the tree
$\tddeux{$1$}{$2$}{10}$, where the edge is of type $t$.
\end{theo}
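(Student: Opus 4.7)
The strategy is to construct $\Phi$ via the universal property of the operad of $\T$-multiple prelie algebras, then prove bijectivity using the freeness result of Corollary~\ref{cor9}. I write $\mathcal{O}(n)$ for the arity-$n$ part of the operad of $\T$-multiple prelie algebras.

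First, I verify that $\P_\T$ itself carries a $\T$-multiple prelie structure when the operation $\bullet_t$ is represented by $T_t := \tddeux{$1$}{$2$}{$t$} \in \P_\T(2)$. A direct calculation with the composition $\circ$ on $\P_\T$ yields, for all $t, t' \in \T$,
\begin{align*}
T_{t'} \circ_2 T_t &= \tdtroisdeux{$1$}{$2$}{$3$}{$t'$}{$t$}, &
T_t \circ_1 T_{t'} &= \tdtroisun{$1$}{$3$}{$2$}{$t'$}{$t$} + \tdtroisdeux{$1$}{$2$}{$3$}{$t'$}{$t$},
\end{align*}
so that $T_{t'} \circ_2 T_t - T_t \circ_1 T_{t'} = -\tdtroisun{$1$}{$3$}{$2$}{$t'$}{$t$}$, which is manifestly invariant under the simultaneous transposition $(2, t) \leftrightarrow (3, t')$. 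This is exactly the operadic form of the identity in Definition~\ref{defi3}, so by the universal property the assignment $\bullet_t \mapsto T_t$ extends uniquely to an operad morphism $\Phi$.

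To show bijectivity on each arity, I compare free algebras on an auxiliary set $\D$. By Corollary~\ref{cor9}, $\g_{\D,\T}$ is the free $\T$-multiple prelie algebra on $\D$, giving $\g_{\D,\T} \cong \bigoplus_{n\geq 1} \mathcal{O}(n) \otimes_{S_n} \K\D^{\otimes n}$. On the other hand, any $\D$-decorated $\T$-typed tree is a labelled typed tree in $\bfT_\T([n])$ for some $n$, together with a decoration map $[n] \to \D$, modulo relabelling; this exhibits $\g_{\D,\T} \cong \bigoplus_{n\geq 1} \P_\T(n) \otimes_{S_n} \K\D^{\otimes n}$. The two presentations agree on the arity-one generators $\tdun{$d$}$, hence by uniqueness in the universal property they coincide. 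Specialising to $\D = [n]$ and extracting the multilinear component in which each of $1,\ldots,n$ appears exactly once yields the bijection $\Phi_n : \mathcal{O}(n) \xrightarrow{\sim} \P_\T(n)$.

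The main technical point is that the grafting-based composition $\circ_a$ on $\P_\T$, defined by a sum over all regraftings, genuinely corresponds to operadic substitution on the prelie side; this is what turns the arity-by-arity bijection into an operad morphism. The compatibility is supplied by Lemma~\ref{lemma8} together with the Guin-Oudom extension: every typed tree arises from single-vertex trees by iterated application of the $\bullet_t$ operations via the $B_d$ construction, and the recursive rule $u \bullet w(x\delta_t) = (u \bullet w) \bullet_t x - x \bullet (w \bullet_t x)$ for the symmetric-algebra product reproduces exactly the sum-over-graftings defining $\circ_a$, so that $\Phi$ intertwines the two compositions.
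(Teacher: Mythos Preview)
Your overall strategy matches the paper's: construct $\Phi$ from the quadratic presentation, then use Corollary~\ref{cor9} (freeness of $\g_{\D,\T}$) to conclude that $\Phi$ is an isomorphism by comparing free algebras. Two points deserve comment.

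First, the paper also proves surjectivity of $\Phi$ directly, by a double induction on the number of vertices $n$ and on the fertility $k$ of the root, writing $T=B_1(T_1\delta_{t_1}\ldots T_k\delta_{t_k})$ and using $\tddeux{$1$}{$2$}{$t_k$}\circ_1 T'$. You bypass this, relying entirely on the free-algebra comparison; that is legitimate, but it means the sentence ``by uniqueness in the universal property they coincide'' in your second paragraph must carry real weight. As written it is vague: you have two identifications of $\g_{\D,\T}$, one with the free $\mathcal{O}$-algebra (Corollary~\ref{cor9}) and one with the free $\P_\T$-algebra (combinatorics), and you must argue that the map $\Phi_*$ between them induced by $\Phi$ is the identity. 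For that you need precisely that the $\T$-multiple prelie structure on $F_{\P_\T}(\D)$ pulled back along $\Phi$ coincides with the grafting products $\bullet_t$ on $\g_{\D,\T}$.

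Second, your third paragraph supplies this verification, but in a roundabout way. The paper's argument is a single line: by the very definition of the operadic composition on $\P_\T$ (a sum over all graftings of the branches at $a$ onto vertices of the inserted tree), the action of $T_t=\tddeux{$1$}{$2$}{$t$}$ on $T\otimes T'$ in the free $\P_\T$-algebra is $\sum_{v\in V(T)} T\bullet_t^{(v)} T' = T\bullet_t T'$. No appeal to Lemma~\ref{lemma8} or to the Guin--Oudom extension is needed here; those were used to prove Corollary~\ref{cor9}, which you are already invoking as a black box. Streamlining this step would make your argument both shorter and clearer.
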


\begin{proof} The operad of $\T$-multiple pre-Lie algebras is generated by the binary elements $\bullet_t$,
$t\in \T$, with the relations
\begin{align*}
&\forall t,t'\in \T,& \bullet_{t'}\circ_2 \bullet_t-\bullet_t\circ_1\bullet_{t'}
&=(\bullet_t\circ_2\bullet_{t'}-\bullet_{t'}\circ_1 \bullet_t)^{(23)}.
\end{align*}
Firstly, if $t$ and $t'$ are elements of $\T$, symbolized by$\typ{10}$ and $\typ{2}$, by the preceding example:
\begin{align*}
\tddeux{$1$}{$2$}{10}\circ_1\tddeux{$1$}{$2$}{2}-\tddeux{$1$}{$2$}{2}\circ_2\tddeux{$1$}{$2$}{10}
&=\tdtroisun{$1$}{$3$}{$2$}{2}{10}
=\left(\tdtroisun{$1$}{$3$}{$2$}{10}{2}\right)^{(23)}
=\left(\tddeux{$1$}{$2$}{2}\circ_1\tddeux{$1$}{$2$}{10}-\tddeux{$1$}{$2$}{10}\circ_2\tddeux{$1$}{$2$}{2}
\right)^{(23)}.
\end{align*}
So the morphism $\phi$ exists. Let us prove that it is surjective:
let $T\in \bfT_\T(n)$, we show that it belongs to $\im(\Phi)$ by induction on $n$. It is obvious if $n=1$ or $n=2$.
Let us assume the result at all ranks $<n$. Up to a reindexation we assume that:
\[T=B_1(T_1\delta_{t_1}\ldots T_k\delta_{t_k}),\]
 where for any $1\leq i<j\leq k$, if $x\in V(T_i)$ and $y\in V(T_j)$, then $x<y$.
We denote by $T'_i$ the standardization of $T_i$. By the induction hypothesis on $n$, $T'_i\in \im(\Phi)$ for all $i$.
We proceed by induction on $k$. The type $t_k$ will be represented in red. If $k=1$, then:
\[T=\tddeux{$1$}{$2$}{10}\circ_2 T_1\in \im(\Phi).\]
Let us assume the result at rank $k-1$. We put $T'=B_1(T_1\delta_{t_1}\ldots T_{k-1}\delta_{t_{k-1}})$. 
By the induction hypothesis on $n$, $T'\in \im(\Phi)$. Then:
\[\tddeux{$1$}{$2$}{10}\circ_1 T'=T+x,\]
where $x$ is a sum of trees with $n$ vertices, such that the fertility of the root is $k-1$. 
Hence, $x\in \im(\Phi)$, so $T\in \im(\Phi)$.\\

Let $\D$ be a set. The morphism $\phi$ implies that the free $\P_\bfT$-algebra generated by $\D$, 
that is to say $\g_{\D,\T}$, inherits
a $\T$-multiple pre-Lie algebra structure defined by:
\begin{align*}
&\forall x,y\in \g_{\D,\T},\: \forall \typ{2}\in \T,& x\circ_{\typ{2}} y=\tddeux{$1$}{$2$}{2}\cdot(x\otimes y),
\end{align*}
where $\cdot$ is the $\P_\T$-algebra structure of $\g_{\D,\T}$.
For any trees $T$, $T'$ in $\bfT_{\D,\T}$, by definition of the operadic composition of $\P_\T$:
\[T\circ_t T'=\sum_{v\in V(T)} T\bullet_t^{(v)} T',\]
so $\circ_t=\bullet_t$ for any $t$. As $(\g_{\D,\T},(\bullet_t)_{t\in \T})$ 
is the free $\T$-multiple pre-Lie algebra generated by $\D$,
$\Phi$ is an isomorphism. \end{proof}

\begin{remark}
Let us assume that $\T$ is finite, of cardinality $T$. Then the components of $\P_\T$ are finite-dimensional. 
As the number of rooted trees which vertices are the elements of $[n]$ is $n^{n-1}$, for any $n\geq 0$
the dimension of $\P_\T(n)$ is $T^{n-1}n^{n-1}$, and the formal series of $\P_\T$ is:
\[f_T(X)=\sum_{n\geq 1} \frac{\dim(\P_\T(n))}{n!} X^n=\sum_{n\geq 1} \frac{(Tn)^{n-1}}{n!} X^n=\frac{f_1(TX)}{T}.\]
\end{remark}

\subsection{Koszul dual operad}

If $\T$ is finite, then $\P_\T$ is a quadratic operad. Its Koszul dual can be directly computed:

\begin{prop} \label{prop12}
The Koszul dual operad $\P_\T^!$ of $\P_\T$ is generated by $\diamond_t$, $t\in \T$, with the relations:
\begin{align*}
&\forall t,t'\in \T,& \diamond_{t'}\circ_1 \diamond_t&=\diamond_t\circ_2 \diamond_{t'},
&\diamond_{t'}\circ_1 \diamond_t&=(\diamond_t\circ _1 \diamond_{t'})^{(23)}.
\end{align*}
The algebras on $\P_\T^!$ are called $\T$-multiple permutative algebras. 
Such an algebra $A$ is given bilinear products $\diamond_t$, $t\in \T$, such that:
\begin{align*}
&\forall x,y,z\in A,& (x\diamond_t y)\diamond_{t'} z&=x\diamond_t (y\diamond_{t'} z),\\
&&(x\diamond_t y)\diamond_{t'} z&=(x\diamond_{t'} z)\diamond_t y.
\end{align*}
In particular, for any $t$, $\diamond_t$ is a permutative product. 
\end{prop}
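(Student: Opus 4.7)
The plan is to apply the standard Koszul duality construction for binary quadratic operads. Writing $\P_\T = P(E,R)$, where $E(2)$ has basis $\{\bullet_t : t \in \T\}$ with trivial $S_2$-action and $R \subseteq T(E)(3)$ is the $\K[S_3]$-submodule generated, from the proof of Theorem \ref{theo11}, by the elements
\[R(t,t') = \bullet_{t'} \circ_2 \bullet_t - \bullet_t \circ_1 \bullet_{t'} - (\bullet_t \circ_2 \bullet_{t'})^{(23)} + (\bullet_{t'} \circ_1 \bullet_t)^{(23)}\]
for $t, t' \in \T$, the Koszul dual is then $\P_\T^! = P(E^\vee \otimes \mathrm{sgn}, R^\perp)$, where $R^\perp$ is taken with respect to the natural pairing on the ternary part of the free operad.

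The main step is the direct computation of $R^\perp$. I take $\diamond_t = \bullet_t^*$ as the dual basis of $E^\vee$, with $S_2$ acting by sign through the twist. Under the standard pairing
\[\langle (\bullet_s \circ_i \bullet_{s'})^\sigma, (\diamond_t \circ_j \diamond_{t'})^\tau \rangle = \varepsilon(\sigma)\, \delta_{ij}\, \delta_{st}\, \delta_{s't'}\, \delta_{\sigma\tau}\]
(where $\varepsilon : S_3 \to \{\pm 1\}$ is the sign convention imposed by the twist), each generator $\sigma \cdot R(t, t')$ imposes one linear constraint on a general element $r' \in T(E^\vee \otimes \mathrm{sgn})(3)$. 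A systematic inspection of the resulting system shows that $R^\perp$ is, as a $\K[S_3]$-submodule, generated by the two families
\[\diamond_{t'} \circ_1 \diamond_t - \diamond_t \circ_2 \diamond_{t'}, \qquad \diamond_{t'} \circ_1 \diamond_t - (\diamond_t \circ_1 \diamond_{t'})^{(23)},\]
indexed by $(t,t') \in \T^2$. This matches the presentation in the proposition.

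Finally, I would translate the operadic relations into algebra identities: evaluating on inputs $(x,y,z)$, the composition $\diamond_{t'} \circ_1 \diamond_t$ yields $(x \diamond_t y) \diamond_{t'} z$, $\diamond_t \circ_2 \diamond_{t'}$ yields $x \diamond_t (y \diamond_{t'} z)$, and $(\diamond_t \circ_1 \diamond_{t'})^{(23)}$ yields $(x \diamond_{t'} z) \diamond_t y$, giving precisely the two axioms of a $\T$-multiple permutative algebra. Taking $t = t'$ recovers the permutative product relations, consistent with Chapoton's description of the Koszul dual of the prelie operad. The main obstacle is the careful bookkeeping of signs: the sign twist in the Koszul dual combined with the $S_3$-action must conspire so that the resulting relations appear with the correct signs and as equalities between compositions rather than, say, anti-associativity; any miscalibration of $\varepsilon$ would produce relations of the wrong form.
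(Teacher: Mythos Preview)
Your proposal is correct and is exactly the direct computation the paper alludes to: the paper gives no proof of this proposition beyond the phrase ``Its Koszul dual can be directly computed,'' and your sketch---writing $\P_\T = P(E,R)$ with the relations $R(t,t')$ extracted from Theorem~\ref{theo11}, passing to $P(E^\vee\otimes\mathrm{sgn},R^\perp)$, and identifying $R^\perp$ with the two families of relations stated---is precisely that computation. Your caveat about sign bookkeeping is well placed but not a gap; once the pairing convention is fixed consistently, the outcome is forced by the dimension count (which you can cross-check against the formula $\dim \P_\T^!(n)=nT^{n-1}$ given just after Proposition~\ref{prop13}).
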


Of course, the definition of $\T$-multiple permutative algebras makes sense even if $\T$ is infinite.
Permutative algebras are introduced in \cite{Chapoton}. If $A$ is a $\T$-multiple permutative algebra,
then for any $(\lambda_t)_{t\in \T}\in \K^{(\T)}$, $\diamond_a=\sum \lambda_t\diamond_t$ is a a permutative product on $A$.

\begin{prop} \label{prop13}
Let $V$ be a vector space. Then $V\otimes S\left(V^{\oplus\T}\right)$ is given a $\T$-multiple permutative algebra structure:
\begin{align*}
&\forall t\in \T,\: v,v'\in V, w,w'\in S\left(V^{\oplus\T}\right),&
(v\otimes w)\diamond_t (v'\otimes w')&=v \otimes ww' (v'\delta_t). 
\end{align*} 
This $\T$-multiple permutative algebra is denoted by $P_\T(V)$. 
For any $\T$-multiple permutative algebra $V$ and any linear map $\phi:V\longrightarrow A$,
there exists a unique morphism $\Phi:P_\T(V)\longrightarrow A$ such that for any $v\in V$, $\Phi(v\otimes 1)=\phi(v)$.
\end{prop}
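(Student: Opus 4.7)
First, I would verify the two $\T$-multiple permutative axioms on $P_\T(V)$ by direct computation. For $x=v\otimes w$, $y=v'\otimes w'$, $z=v''\otimes w''$, one unfolds
\[
(x\diamond_t y)\diamond_{t'} z \;=\; v\otimes ww'(v'\delta_t)w''(v''\delta_{t'}),
\]
and the two candidates $x\diamond_t(y\diamond_{t'}z)$ and $(x\diamond_{t'}z)\diamond_t y$ yield tensors identical to this one up only to the commutativity of $S(V^{\oplus\T})$. Both relations are therefore immediate.

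For the universal property, I would first observe that $P_\T(V)$ is generated as a $\T$-multiple permutative algebra by $V\otimes 1$: by induction on $k$, the element $v\otimes(v_1\delta_{t_1})\cdots(v_k\delta_{t_k})$ is obtained as $\bigl(\cdots\bigl((v\otimes 1)\diamond_{t_1}(v_1\otimes 1)\bigr)\cdots\bigr)\diamond_{t_k}(v_k\otimes 1)$. Hence any morphism $\Phi$ extending a given linear map $\phi:V\to A$ is forced to send this basis element to the corresponding iterated diamond product of $\phi(v),\phi(v_1),\ldots,\phi(v_k)$ in $A$, which yields uniqueness. For existence, the decisive point is to show that this formula does not depend on the order in which the pairs $(v_i,t_i)$ are read off from the (commutative) monomial $(v_1\delta_{t_1})\cdots(v_k\delta_{t_k})$. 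Equivalently, in any $\T$-multiple permutative algebra $A$, the left-associated iterated product
\[
[a_0;(a_1,t_1),\ldots,(a_k,t_k)] \;:=\; \bigl(\cdots\bigl((a_0\diamond_{t_1}a_1)\diamond_{t_2}a_2\bigr)\cdots\bigr)\diamond_{t_k}a_k
\]
must be symmetric in the pairs $(a_i,t_i)_{i\geq 1}$. The second axiom applied to the truncation $[a_0;(a_1,t_1),\ldots,(a_{i+1},t_{i+1})]$ swaps the pairs $(a_i,t_i)$ and $(a_{i+1},t_{i+1})$; the remaining tail $(a_{i+2},t_{i+2}),\ldots,(a_k,t_k)$ is reattached on the right afterwards, and since adjacent transpositions generate the symmetric group this gives full symmetry, and hence well-definedness of $\Phi$.

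It remains to check that $\Phi$ is a morphism of $\T$-multiple permutative algebras. A straightforward induction on $m$, using the first axiom, establishes the identity
\[
a\diamond_t [a_0;(a_1,t_1),\ldots,(a_m,t_m)] \;=\; [a;(a_0,t),(a_1,t_1),\ldots,(a_m,t_m)].
\]
Applying this, both $\Phi\bigl((v\otimes w)\diamond_t(v'\otimes w')\bigr)$ and $\Phi(v\otimes w)\diamond_t\Phi(v'\otimes w')$ reduce to iterated products over the same multiset of pairs in $A$, hence coincide by the symmetry established in the previous paragraph. The single step requiring real care is the well-definedness of the iterated product, for it is the only place where the two defining relations must be combined nontrivially; every other part of the argument uses a single relation or the commutativity of the symmetric algebra.
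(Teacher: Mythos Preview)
Your proof is correct and follows essentially the same route as the paper: direct verification of the axioms, well-definedness of the iterated left-associated product via the symmetry axiom (you use adjacent transpositions, the paper moves an arbitrary factor to the end, but both generate $\mathfrak{S}_k$), and the morphism property via the associativity axiom by induction on the length of $w'$. The only difference is organizational: you isolate the identity $a\diamond_t[a_0;(a_1,t_1),\ldots,(a_m,t_m)]=[a;(a_0,t),(a_1,t_1),\ldots,(a_m,t_m)]$ as a separate lemma, whereas the paper performs the same induction inline.
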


\begin{proof}
Let $t$, $t'\in \T$, $v$, $v$, $v'' \in V$,
$w$, $w'$, $w''\in S\left(V^{\oplus\T}\right)$. 
\begin{align*}
(v\otimes w \diamond_t v'\otimes w')\diamond_{t'} v''\otimes w''
&=v\otimes w \diamond_t (v'\otimes w'\diamond_{t'} v''\otimes w'')\\
&=(v\otimes w \diamond_t v'\diamond_{t'} v''\otimes w'')\otimes w'\\
&=v\otimes ww'w'' (v'\delta_t)(v''\delta_{t'}),
\end{align*}
so $P_\T(V)$ is $\T$-multiple permutative. \\

\textit{Existence of $\Phi$}. Let $t_1,\ldots,t_k\in \T$, $v,v_1,\ldots,v_k\in V$. We inductively
define $\Phi(v\otimes (v_1\delta_{t_1})\ldots (v_k\delta_{t_k}))$ by:
\begin{align*}
\Phi(v\otimes 1)&=\phi(v),\\
\Phi(v\otimes (v_1\delta_{t_1})\ldots (v_k\delta_{t_k}))
&=\Phi(v\otimes (v_1\delta_{t_1})\ldots (v_{k-1}\delta_{t_{k-1}})) \diamond_{t_k} \phi(v_k) \mbox{ if }k\geq 1.
\end{align*}
Let us prove that this does not depend on the order chosen on the factors $v_i\delta_{t_i}$ by induction on $k$.
If $k=0$ or $1$, there is nothing to prove. Otherwise, if $i<k$:
\begin{align*}
&\Phi(v\otimes (v_1\delta_{t_1})\ldots (v_{i-1}\delta_{t_{i-1}})(v_{i+1}\delta_{t_{i+1}})\ldots
(v_k\delta_{t_k}))\diamond_{t_i} \phi(v_i)\\
&=(\Phi(v\otimes (v_1\delta_{t_1})\ldots (v_{i-1}\delta_{t_{i-1}})(v_{i+1}\delta_{t_{i+1}})\ldots
(v_{k-1}\delta_{t_{k-1}}))\diamond_{t_k} \phi(v_k))\diamond_{t_i} \phi(v_i)\\
&=(\Phi(v\otimes (v_1\delta_{t_1})\ldots (v_{i-1}\delta_{t_{i-1}})(v_{i+1}\delta_{t_{i+1}})\ldots
(v_{k-1}\delta_{t_{k-1}}))\diamond_{t_i} \phi(v_i))\diamond_{t_k} \phi(v_k)\\
&=\Phi(v\otimes (v_1\delta_{t_1})\ldots (v_{k-1}\delta_{t_{k-1}}))\diamond_{t_k} \phi(v_k)\\
&=\Phi(v\otimes (v_1\delta_{t_1})\ldots (v_k\delta_{t_k})).
\end{align*}
So $\Phi$ is well-defined. Let us prove that $\Phi$ is a $\T$-multiple permutative algebra morphism.
Let $v,v'\in V$, $w$, $w'=(v_1\delta_{t_1})\ldots (v_k\delta_{t_k}) \in S\left(V^{\oplus\T}\right)$, and $t\in \T$.
Let us prove that $\Phi(v\otimes w \diamond_t v'\otimes w'')=\Phi(v\otimes w)\diamond_t \Phi(v'\otimes w')$
by induction on $k$. If $k=0$:
\begin{align*}
\Phi(v\otimes w \diamond_t v'\otimes 1)&=\Phi(v\otimes w(v'\delta_t))\\
&=\Phi(v\otimes w)\diamond_t \phi(v')\\
&=\Phi(v\otimes w)\diamond_t \Phi(v'\otimes 1).
\end{align*} 
Otherwise, we put $w''=(v_1\delta_{t_1})\ldots (v_{k-1}\delta_{t_{k-1}})$. Then:
\begin{align*}
\Phi(v\otimes w \diamond_t v'\otimes w')&=\Phi(v\otimes ww'' (v'\delta_t)(v_k \delta_{t_k}))\\
&=\Phi(v\otimes ww'' (v'\delta_t))\diamond_{t_k} \phi(v_k)\\
&=\Phi(v\otimes w \diamond_t v'\otimes w'')\diamond_{t_k} \phi(v_k)\\
&=(\Phi(v\otimes w)\diamond_t \Phi(v'\otimes w''))\diamond_{t_k} \phi(v_k)\\
&=\Phi(v\otimes w)\diamond_t (\Phi(v'\otimes w'')\diamond_{t_k} \phi(v_k))\\
&=\Phi(v\otimes w')\diamond_t \Phi(v'\otimes w').
\end{align*}
So $\Phi$ is a $\T$-multiple permutative algebra morphism.\\

\textit{Uniqueness}. For any $v,v_1,\ldots,v_k \in V$, $t_1,\ldots,t_k \in \T$:
\[v\otimes (v_1\delta_{t_1})\ldots (v_k\delta_{t_k})
=(v\otimes (v_1\delta_{t_1})\ldots (v_{k-1}\delta_{t_{k-1}}))\diamond_{t_k} v_k.\]
It is then easy to prove that $P_\T(V)$ is generated by $V\otimes 1$ as a $\T$-multiple permutative algebra.
Consequently, $\Phi$ is unique. \end{proof}

\begin{remark}\begin{enumerate}
\item We proved that $P_\T(V)$ is freely generated by $V$, identified with $V\otimes 1$. Consequently,
$\P_\T^!(n)$ has the same dimension as the multilinear component of $V\otimes S\left(V^{\oplus\T}\right)$ with
$V=Vect(X_1,\ldots,X_n)$, that is to say:
\[Vect(X_i\otimes (X_1\delta_{t_1})\ldots (X_{i-1}\delta_{t_{i-1}})(X_{i+1}\delta_{t_{i+1}})
\ldots (X_n\delta_{t_n}),\: 1\leq i \leq n, t_j\in \T),\]
so:
\[\dim(\P_\T^!(n))=n T^{n-1}.\]
The formal series of $\P_\T^!$ is:
\[f^!_T(X)=\sum_{n\geq 1} \frac{\dim(\P_\T^!(n))}{n!} X^n
=\sum_{n\geq 1} \frac{T^{n-1}}{(n-1)!} X^n=X\mathrm{exp}(TX)=\frac{f^!_1(TX)}{T}.\]
\item It is possible to prove that $\P_\T^!$ is a Koszul operad (and, hence, $\P_\T$ too) using the rewriting method of \cite{Vallette}.
\end{enumerate}\end{remark}

\section{Structure of the pre-Lie products}

\subsection{A nonassociative permutative coproduct}

\begin{prop} \label{prop14}
For all $t\in \T$, we define a coproduct $\rho_t:\g_{\D,\T}\longrightarrow \g_{\D,\T}^{\otimes 2}$ by:
\begin{align*}
&\forall T=B_d\left(\prod_{i\in [k]} T_i \delta_{t_i}\right) \in \bfT_{\D,\T},&
\rho_t(T)&=\sum_{j\in [k]} B_d\left(\prod_{i\in [k],\:i\neq j} T_i \delta_{t_i}\right) \otimes T_j \delta_{t,t_j}.
\end{align*}
Then:
\begin{enumerate}
\item For all $t,t'\in \T$, $(\rho_t\otimes \id)\circ \rho_{t'}=((\rho_{t'}\otimes \id)\circ \rho_t)^{(23)}$.
\item For any $x,y\in \g_{\D,\T}$, $t,t'\in \T$, with Sweedler's notations 
$\rho_t(x)=\sum x^{(1)_t}\otimes x^{(2)_t}$,
\[\rho_t(x\bullet_{t'} y)=\delta_{t,t'} x\otimes y+\sum x^{(1)_t}\bullet_{t'} y\otimes x^{(2)_t}
+\sum x^{(1)_t}\otimes x^{(2)_t}\bullet_{t'} y.\]
\item For any $\mu=(\mu_t)_{t\in \T}\in \K^\T$, we put:
\[\rho_\mu=\sum_{t\in \T} \mu_t \rho_t:\g_{\D,\T}\longrightarrow \g_{\D,\T}^{\otimes 2}.\]
This makes sense, as any tree in $\bfT_{\D,\T}$ does not vanish only under a finite number of $\rho_t$.
Then $\rho_\mu$ is a nonassociative permutative (NAP) coproduct; 
for any $x,y\in \g_{\D,\T}$, by the second point, using Sweeder's notation for $\rho_\mu$:
\begin{align*}
\rho_\mu(x\bullet_\lambda y)&=\left(\sum_{t\in \T} \lambda_t\mu_t\right) x\otimes y
+\sum x^{(1)_\mu}\bullet_\lambda y\otimes x^{(2)_\mu}
+\sum x^{(1)_\mu}\otimes x^{(2)_\mu}\bullet_\lambda y.
\end{align*}
In particular, if $\displaystyle \sum_{t\in \T} \lambda_t\mu_t=1$,
$(\g_{\D,\T},\bullet_\lambda,\rho_\mu)$ is a NAP pre-Lie bialgebra in the sense of \cite{Livernet}.
\end{enumerate}\end{prop}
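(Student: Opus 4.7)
All three parts reduce to careful bookkeeping based on the two local operations: $\rho_t$ cuts a single edge issued from the root of the tree (keeping only those whose type equals $t$, thanks to the Kronecker factor $\delta_{t,t_j}$), while $\bullet_{t'}$ grafts at a vertex and creates a new edge of type $t'$. Throughout I would write $T=B_d\bigl(\prod_{i\in[k]} T_i\delta_{t_i}\bigr)$ and argue tree by tree, extending to $\g_{\D,\T}$ by linearity at the end.

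For (1), I compute both sides directly. Applying $\rho_{t'}$ selects an index $j$ with $t_j=t'$ and returns
\[
B_d\Bigl(\prod_{i\neq j}T_i\delta_{t_i}\Bigr)\otimes T_j;
\]
then $(\rho_t\otimes \Id)$ selects a second index $\ell\neq j$ with $t_\ell=t$, leaving
\[
B_d\Bigl(\prod_{i\neq j,\,\ell}T_i\delta_{t_i}\Bigr)\otimes T_\ell\otimes T_j.
\]
The symmetric composition $(\rho_{t'}\otimes \Id)\circ\rho_t$ produces the same sum indexed by ordered pairs $(j,\ell)$ of distinct children with $(t_j,t_\ell)=(t',t)$, but with $T_\ell$ and $T_j$ swapped in the last two slots; applying the twist $(23)$ on the right-hand side reconciles the two results. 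So (1) is a pure reindexation.

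For (2), I reduce to $x=T$, $y=T'\in\bfT_{\D,\T}$ with $T=B_d(\prod T_i\delta_{t_i})$. The product $T\bullet_{t'}T'$ is the sum over $v\in V(T)$ of single graftings $T\bullet_{t'}^{(v)}T'$, and I partition the contributions to $\rho_t(T\bullet_{t'}^{(v)}T')$ according to which edge gets cut. Three disjoint cases cover everything: case (a), where $v$ is the root of $T$ and the cut edge is the freshly created one of type $t'$, forces $t=t'$ and yields the term $\delta_{t,t'}\,T\otimes T'$; case (b), where the cut edge is some original $T_j$ of type $t$ while $v$ lies in the complement of $T_j$ inside $T$ (possibly at the root), collectively produces $\sum x^{(1)_t}\bullet_{t'}y\otimes x^{(2)_t}$ once one sums over $v$ over all surviving vertices; case (c), where $v$ lies inside $T_j$ and $T_j$ itself is the cut branch, gives $x^{(1)_t}\otimes x^{(2)_t}\bullet_{t'}y$. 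Summing the three contributions reproduces the claimed identity. This is the only delicate step of the whole proof and the main obstacle is simply to check that these three cases are disjoint and exhaustive.

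For (3), bilinearity of $\rho_\mu=\sum_t\mu_t\rho_t$ immediately gives the formula for $\rho_\mu(x\bullet_\lambda y)$ with leading coefficient $\sum_{t\in\T}\lambda_t\mu_t$ coming from the case (a) term of part (2). The co-NAP identity $(\rho_\mu\otimes \Id)\circ\rho_\mu=((\rho_\mu\otimes \Id)\circ\rho_\mu)^{(23)}$ follows from (1) by expanding bilinearly in $t,t'$ and swapping the dummy indices on the twisted side. When moreover $\sum_t\lambda_t\mu_t=1$, the displayed compatibility becomes exactly the NAP prelie bialgebra axiom of \cite{Livernet}, giving the last assertion.
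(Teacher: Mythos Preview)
Your argument is correct and matches the paper's proof in all essentials. For part~(1) you do exactly the paper's double-index computation; for part~(2) the paper first expands $T\bullet_{t'}T'$ as the root-grafting term plus the sum over subtrees $T_i$, and then applies $\rho_t$, which is precisely your three-case analysis (a)/(b)/(c) organized in a slightly different order; part~(3) is, in both, a direct summation over $t,t'$.
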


\begin{proof} 1. For any tree $T$:
\begin{align*}
(\rho_t\otimes \id)\circ \rho_{t'}(T)&=\sum_{p,q\in [k], p\neq q} B_d\left(\prod_{i\in [k], i\neq p,q}T_i\delta_{t_i}
\right)\otimes T_p \delta_{t_p,t}\otimes T_q\delta_{t_q,t'},
\end{align*}
which implies the result. \\

2. For any tree $T,T'$:
\begin{align*}
\rho_t(T\bullet_{t'} T')&=\rho_t\left(B_d\left(\prod_{i\in [k]}T_i\delta_{t_i} T'\delta_{t'}\right)
+\sum_{i\in [k]} B_d\left(\prod_{j\in [k], j\neq i}T_j \delta_{t_j} (T_i\bullet_{t'}T')\delta_{t_i}\right)\right)\\
&=B_d\left(\prod_{i\in [k]}T_i\delta_{t_i}\right)\otimes T'\delta_{t,t'}+\sum_{i\in [k]}
B_d\left(\prod_{j\in [k],j\neq i}T_j\delta_{t_j} T'\delta_{t'}\right)\otimes T_i\delta_{t_i,t}\\
&+\sum_{i\in [k]}B_d\left(\prod_{j\in [k], j\neq i}T_j \delta_{t_j} \right)\otimes (T_i\bullet_{t'}T')\delta_{t_i,t'}\\
&+\sum_{i\neq j \in [k]}B_d\left(\prod_{p\in [k],p\neq i,j} T_p \delta_{t_p} (T_j\bullet_{t'} T')\delta_{t_j}\right)\otimes T_i \delta_{t_i,t}\\
&=T\otimes T'\delta_{t,t'}+\sum_{i\in [k]} B_d\left(\prod_{j\in [k],j\neq i}T_j\delta_{t_j}\right)\bullet_{t'} T'\otimes T_i \delta_{t_i,t}\\
&+\sum_{i\in [k]} B_d\left(\prod_{j\in [k],j\neq i}T_j\delta_{t_j}\right)\otimes T_i \bullet_{t'} T'\delta_{t_i,t}\\
&=T\otimes T'\delta_{t,t'}+T^{(1)_t}\bullet_{t'}T'\otimes T^{(2)_t}+T^{(1)_t}\otimes T^{(2)_t}\bullet_{t'}T'.
\end{align*}

3. Obtained by summation. \end{proof}

\begin{cor} \label{cor15}
If $\lambda \in \K^{(\T)}$ is nonzero, let us choose $t_0\in \T$ such that $\lambda_{t_0}\neq 0$.
The pre-Lie algebra $(\g_{\D,\T}, \bullet_\lambda)$ is freely generated by the set 
$\bfT_{\D,\T}^{(t_0)}$ of $\T$-typed $\D$-decorated trees $T$ such that
there is no edge outgoing the root of $T$ of type $t_0$.
\end{cor}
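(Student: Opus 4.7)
The plan is to derive the corollary from Livernet's rigidity theorem \cite{Livernet} applied to the NAP prelie bialgebra structure on $\g_{\D,\T}$ supplied by Proposition \ref{prop14}. Since $\lambda_{t_0}\neq 0$, I would pick $\mu\in \K^{(\T)}$ with $\mu_{t_0}=1/\lambda_{t_0}$ and $\mu_t=0$ for $t\neq t_0$, so that $\sum_{t\in\T}\lambda_t\mu_t=1$. By item~3 of Proposition \ref{prop14}, the triple $(\g_{\D,\T},\bullet_\lambda,\rho_\mu)$ is then a NAP prelie bialgebra, with $\rho_\mu$ a nonzero scalar multiple of $\rho_{t_0}$.

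The next step is to identify the primitives, namely $\ker \rho_{t_0}$. The explicit formula of Proposition \ref{prop14} shows that any $T=B_d(\prod_i T_i\delta_{t_i})\in \bfT_{\D,\T}^{(t_0)}$ (no edge outgoing the root of type $t_0$) satisfies $\rho_{t_0}(T)=0$, so $\mathrm{Vect}(\bfT_{\D,\T}^{(t_0)})\subseteq \ker\rho_{t_0}$. For the reverse inclusion, I would grade trees by their $t_0$-fertility $k$, the number of $t_0$-typed edges outgoing the root: the formula shows that $\rho_{t_0}$ sends the degree-$k$ component into (degree-$(k{-}1)$) $\otimes$ (anything), so the kernel decomposes along this grading. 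On the degree-$k$ piece for $k\geq 1$, there is a natural linear isomorphism with $\mathrm{Vect}(\bfT_{\D,\T}^{(t_0)})\otimes S^k(\g_{\D,\T})$, obtained by separating the core of the tree (the subtree that remains after cutting off all $t_0$-subtrees of the root) from the multiset of $t_0$-subtrees themselves; under this identification $\rho_{t_0}$ becomes the standard coderivation $u\otimes(v_1\cdots v_k)\mapsto \sum_{j=1}^k u\otimes (v_1\cdots \widehat{v_j}\cdots v_k)\otimes v_j$, which is injective in characteristic zero. Hence $\ker\rho_{t_0}=\mathrm{Vect}(\bfT_{\D,\T}^{(t_0)})$.

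Finally, the grading by number of vertices makes $(\g_{\D,\T},\bullet_\lambda,\rho_\mu)$ a connected graded NAP prelie bialgebra: $\bullet_\lambda$ is additive on vertex counts, while $\rho_\mu$ preserves the total vertex count on the tensor product. Livernet's rigidity theorem then asserts that $\g_{\D,\T}$ is free as a prelie algebra on its space of primitives; combining with the identification above gives that $(\g_{\D,\T},\bullet_\lambda)$ is freely generated by $\bfT_{\D,\T}^{(t_0)}$, which is exactly the claim.

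The main obstacle I anticipate is the clean identification of $\ker\rho_{t_0}$; once the $t_0$-fertility decomposition is in place, the injectivity step reduces to the classical coderivation on a symmetric algebra, which is the combinatorial heart of the argument. Verifying the gradedness and connectedness hypotheses of Livernet's theorem is routine.
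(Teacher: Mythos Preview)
Your proof is correct and follows essentially the same approach as the paper: choose $\mu$ supported at $t_0$ with $\mu_{t_0}=1/\lambda_{t_0}$, invoke Proposition~\ref{prop14} and Livernet's rigidity theorem, then identify $\ker\rho_{t_0}$ with $\mathrm{Vect}(\bfT_{\D,\T}^{(t_0)})$. The only cosmetic difference is in the nontrivial inclusion: where you pass through the symmetric-algebra coderivation, the paper writes down the grafting-on-the-root map $\Upsilon(T\otimes T')=T\bullet_{t_0}^{\mathrm{root}(T)}T'$ and observes directly that $\Upsilon\circ\rho_{t_0}(T)=\alpha_T\,T$ (with $\alpha_T$ the $t_0$-fertility of the root), which is exactly the ``compose with multiplication'' argument underlying your injectivity claim.
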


\begin{proof} For any tree $T$, we denote by $\alpha_T$ the number of edges outgoing the root of $T$ of type $T_0$.
Our aim is to prove that $(\g_{\D,\T},\bullet_\lambda)$ is freely generated by the trees $T$ such that $\alpha_T=0$.
We define a family of scalar $b$ by:
\begin{align*}
&\forall t\in \T,&\mu_t&=\begin{cases}
0\mbox{ if }t\neq t_0,\\
\displaystyle \frac{1}{\lambda_{t_0}} \mbox{ if }t=t_0.
\end{cases}\end{align*}
Note that $\displaystyle \rho_\mu=\frac{1}{\lambda_{t_0}} \rho_{t_0}$.
By proposition \ref{prop14}, $(\g_{\D,\T},\bullet_\lambda,\rho_\mu)$ is a NAP pre-Lie bialgebra, 
so by Livernet's rigidity theorem \cite{Livernet}, it is freely generated by $\ker(\rho_\mu)=\ker(\rho_{t_0})$.
Obviously, if $\alpha_T=0$, $T\in \ker(\rho_{t_0})$. 
Let us consider $\displaystyle x=\sum_{T\in \bfT_{\D,\T}} x_T T\in \ker(\rho_{t_0})$. We consider the map:
\[\Upsilon:\left\{\begin{array}{rcl}
\g_{\D,\T}\otimes \g_{\D,\T}&\longrightarrow&\g_{\D,\T}\\
T\otimes T'&\longrightarrow&T\bullet_{t_0}^{\mathrm{root}(T)}T'.
\end{array}
\right.\]
By definition of $\rho_{t_0}$, for any tree $T$, $\Upsilon \circ \rho_{t_0}(T)=\alpha_T T$. Consequently:
\[0=\Upsilon \circ \rho_{t_0}(x)=\sum_{T\in \bfT_{\D,\T}} x_T\alpha_TT.\]
So if $\alpha_T\neq 0$, $x_T=0$, and $x$ is a linear span of trees such that $\alpha_T=0$ : 
the set of trees $T$ such that $\alpha_T=0$ is a basis of $\ker(\rho_{t_0})$. \end{proof}

If $|\D|=D$ and $|\T|=T$, the number of elements of $\bfT_{\D,\T}^{(t_0)}$  
of degree $n$ is denoted by $t'_{D,T}(n)$;
it does not depend on $t_0$. By direct computations:
\begin{align*}
t'_{D,T}(1)&=D,\\
t'_{D,T}(2)&=D^2(T-1),\\
t'_{D,T}(3)&=\frac{D^2(T-1)(3DT-D+1)}{2},\\
t'_{D,T}(4)&=\frac{D^2(T-1)(16D^2T^2-8D^2T+D^2+6DT-3D+2)}{6}.
\end{align*}
In the particular case $D=1$, $T=2$, we recover sequence A005750 of the OEIS.

\subsection{Pre-Lie algebra morphisms}

\begin{notation} Let $\T$ and $\T'$ be two sets of types.
We denote by $\M_{\T,\T'}(\K)$ the space of matrices $M=(m_{t,t'})_{(t,t')\in \T\times \T'}$, 
such that for any $t'\in \T'$,
$(m_{t,t'})_{t\in \T} \in \K^{(\T)}$. 
If $\T=\T'$, we shall simply write $\M_\T(\K)$.
If $M\in \M_{\T,\T'}(\K)$ and $M'\in M_{\T',\T''}(\K)$, then:
\[MM'=\left(\sum_{t'\in \T'} m_{t,t'}m'_{t',t''}\right)_{(t,t'')\in \T\times \T''} \in \M_{\T,\T''}(\K).\]
If $\lambda\in \K^{(\T')}$ and $\mu\in \K^\T$, then:
\begin{align*}
M\lambda&=\left(\sum_{t'\in \T'}m_{t,t'}\lambda_{t'}\right)_{t\in \T}\in \K^{(\T)},&
M^\top \mu&=\left(\sum_{t\in \T}m_{t,t'}\mu_t\right)_{t'\in \T'}\in \K^{\T'}.
\end{align*}
In particular, $\M_\T(\K)$ is an algebra, acting on $\K^{(\T)}$ on the left and on $\K^\T$ on the right.
\end{notation}

\begin{defi}
Let $M\in \M_{\T,\T'}(\K)$. We define a map $\Phi_M:\h_{\D,\T'}\longrightarrow \h_{\D,\T}$, 
sending $F\in \bfF_{\D,\T}$ to the forest obtained by replacing $\type(e)$ by $\displaystyle \sum_{t\in \T} 
m_{t,\type(e)} t$ for any $e\in E(F)$, $F$ being considered as linear in any of its edges.
The restriction of $\Phi_M$ to $\g_{\D,\T'}$ is denoted by $\phi_M:\g_{\D,\T'}\longrightarrow \g_{\D,\T}$.
\end{defi}

\begin{example}
If $\T$ contains two elements, the first one represented by $\typ{10}$ and the second one by $\typ{2}$, 
if $M=\begin{pmatrix}\alpha&\beta\\\gamma&\delta\end{pmatrix}$, for any $x,y,z\in \D$:
\begin{align*}
\phi_M\left(\tddeux{$x$}{$y$}{10}\right)&=\alpha\tddeux{$x$}{$y$}{10}+\gamma\tddeux{$x$}{$y$}{2},\\
\phi_M\left(\tddeux{$x$}{$y$}{2}\right)&=\beta\tddeux{$x$}{$y$}{10}+\delta\tddeux{$x$}{$y$}{2},\\
\phi_M\left(\tdtroisun{$x$}{$z$}{$y$}{10}{2}\right)&=\alpha\beta\tdtroisun{$x$}{$z$}{$y$}{10}{10}
+\alpha\delta\tdtroisun{$x$}{$z$}{$y$}{10}{2}+\beta\gamma\tdtroisun{$x$}{$z$}{$y$}{2}{10}
+\gamma\delta\tdtroisun{$x$}{$z$}{$y$}{2}{2}.
\end{align*}\end{example}

\begin{remark}
For any $M\in \M_{\T,\T'}(\K)$, $M'\in \M_{\T',\T''}(\K)$, $\Phi_M\circ \Phi_{M'}=\Phi_{MM'}$.
\end{remark}

\begin{prop}
Let $\lambda\in \K^{(\T)}$, $\mu\in \K^\T$ and $M\in \M_{\T,\T'}(\K)$. 
Then $\phi_M$ is a pre-Lie morphism from $(\g_{\D,\T'},\bullet_\lambda)$ to $(\g_{\D,\T},\bullet_{M\lambda})$
and  a NAP coalgebra morphism from $(\g_{\D,\T'},\rho_{M^\top \mu})$ to $(\g_{\D,\T},\rho_\mu)$.
\end{prop}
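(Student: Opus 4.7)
The plan is to reduce both claims to two ``atomic'' identities describing how $\phi_M$ interacts with a single elementary grafting $\bullet_{t'}$ and with a single child-extraction $\rho_t$, and then to obtain the full morphism statements by summing against $\lambda$ and $\mu$ respectively. The key observation driving everything is that, by definition, $\phi_M$ acts multilinearly in the edges of a tree, substituting each edge of type $s\in\T'$ by $\sum_{t\in\T} m_{t,s}\,t$.

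For the prelie half, I start with $T,T'\in\bfT_{\D,\T'}$, $t'\in\T'$ and a vertex $v\in V(T)$. Since $T\bullet_{t'}^{(v)}T'$ has the same edges as $T$ and $T'$ together with exactly one additional edge of type $t'$, edge-multilinearity of $\phi_M$ gives
\[\phi_M(T\bullet_{t'}^{(v)}T')=\sum_{t\in\T} m_{t,t'}\;\phi_M(T)\bullet_t^{(v)}\phi_M(T').\]
Summing over $v\in V(T)$, then over $t'\in\T'$ weighted by $\lambda_{t'}$, and invoking the definition $(M\lambda)_t=\sum_{t'} m_{t,t'}\lambda_{t'}$, yields $\phi_M(T\bullet_\lambda T')=\phi_M(T)\bullet_{M\lambda}\phi_M(T')$, which is the first claim.

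For the NAP half, I fix a tree $T=B_d(\prod_{i\in[k]}T_i\delta_{t_i})$ and compute both $\rho_t\circ\phi_M(T)$ and $(\phi_M\otimes\phi_M)\circ\rho_{t'}(T)$ directly from the definitions. In both expressions each $T_i$ contributes $\phi_M(T_i)$; every edge of $T$ other than the root-to-child edge going to some chosen $T_j$ produces an ordinary substitution factor, while the distinguished root-to-$T_j$ edge is the one on which the matching of types occurs. Comparing the two bookkeepings gives the atomic identity
\[\rho_t\circ\phi_M=\sum_{t'\in\T'} m_{t,t'}\;(\phi_M\otimes\phi_M)\circ\rho_{t'}.\]
Multiplying by $\mu_t$, summing over $t\in\T$ and using $(M^\top\mu)_{t'}=\sum_t m_{t,t'}\mu_t$ produces $\rho_\mu\circ\phi_M=(\phi_M\otimes\phi_M)\circ\rho_{M^\top\mu}$, which is the second claim.

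I expect no genuine obstacle here, only bookkeeping: in each elementary operation (grafting one new edge, or cutting off the edge to one chosen child) exactly one edge is distinguished, $M$ is applied once on that edge's type, and all other edges are treated uniformly. The only cosmetic care needed is to keep the two index sets straight: $\lambda$ and $M^\top\mu$ live on $\T'$ while $M\lambda$ and $\mu$ live on $\T$, which is consistent with $M$ acting on the left on $\K^{(\T')}$ and $M^\top$ on the right on $\K^\T$.
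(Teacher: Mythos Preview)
Your proof is correct and follows essentially the same approach as the paper: both establish the atomic identities $\phi_M(T\bullet_{t'}^{(v)}T')=\sum_{t} m_{t,t'}\,\phi_M(T)\bullet_t^{(v)}\phi_M(T')$ and $\rho_t\circ\phi_M=\sum_{t'} m_{t,t'}\,(\phi_M\otimes\phi_M)\circ\rho_{t'}$ by edge-multilinearity, then sum against $\lambda$ and $\mu$ respectively. Your write-up is in fact more careful than the paper's about distinguishing the index sets $\T$ and $\T'$.
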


\begin{proof} Let $T,T'\in \bfT_{\D,\T}$. For any $t\in \T$, for any $v\in V(T)$:
\begin{align*}
\phi_M\left(T\bullet_t^{(v)} T'\right)=\sum_{t'\in \T} m_{t',t} \phi_M(T)\bullet_{t'} \phi_M(T'),
\end{align*}
so:
\begin{align*}
\phi_M(T\bullet_\lambda T')&=\sum_{t,t'\in \T}m_{t',t}\lambda_t \phi_M(T)\bullet_{t'}\phi_M(T')=\phi_M(T)\bullet_{M\lambda}\phi_M(T').
\end{align*}
We proved that $\phi_M$ is a pre-Lie algebra morphism from $(\g_{\D,\T'},\bullet_\lambda)$ to $(\g_{\D,\T},\bullet_{M\lambda})$.\\

For any $T \in \bfT_{\D,\T}$:
\begin{align*}
\rho_t\circ \phi_M(T)&=\sum_{t'\in \T} m_{t,t'} (\phi_M\otimes \phi_M)\circ \rho_{t'}(T),
\end{align*}
so:
\begin{align*}
\rho_\mu\circ \phi_M(T)&=\sum_{t,t'\in \T} m_{t,t'} \mu_t (\phi_M\otimes \phi_M)\circ \rho_{t'}(T)
=(\phi_M\otimes \phi_M)\circ \rho_{M^\top \mu}(T).
\end{align*}
So $\phi_M:(\g_{\D,\T'},\rho_{M^\top \mu})\longrightarrow(\g_{\D,\T},\rho_\mu)$ is a NAP coalgebra morphism.\end{proof}

\begin{cor} \label{cor18}
For any $\lambda \in\in \K^{(\T)}$ and $\mu\in \K^\T$, 
such that $\displaystyle \sum_{t\in \T} \lambda_t\mu_t=1$, for any $t_0\in \T$,
the NAP pre-Lie bialgebras $(\g_{\D,\T},\bullet_\lambda,\rho_\mu)$ and $(\g_{\D,\T},\bullet_{t_0},\rho_{t_0})$ are isomorphic.
\end{cor}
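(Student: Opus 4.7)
The plan is to realize the isomorphism as $\phi_M$ for an appropriate invertible matrix $M \in \M_\T(\K)$, via the preceding proposition. Since $\phi_M \circ \phi_{M'} = \phi_{MM'}$ and $\phi_I = \Id$, any invertible $M \in \M_\T(\K)$ induces an invertible $\phi_M$; by the proposition, it suffices to find an invertible $M$ with $M e_{t_0} = \lambda$ and $M^\top \mu = e_{t_0}$, where I write $e_t \in \K^{(\T)}$ for the indicator of $t$. Such an $M$ produces an isomorphism of NAP prelie bialgebras $\phi_M: (\g_{\D,\T}, \bullet_{e_{t_0}}, \rho_{e_{t_0}}) \to (\g_{\D,\T}, \bullet_\lambda, \rho_\mu)$.

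I first treat the case $\lambda_{t_0} \neq 0$. Define $M$ by
\begin{align*}
m_{t, t_0} &= \lambda_t, \\
m_{t, t'} &= \delta_{t, t'} - \mu_{t'} \lambda_t \quad \text{for } t' \neq t_0,
\end{align*}
so that column $t_0$ of $M$ is $\lambda$ and column $t'$ is $e_{t'} - \mu_{t'} \lambda$; each column lies in $\K^{(\T)}$. The identities $M e_{t_0} = \lambda$ and $M^\top \mu = e_{t_0}$ are immediate from the hypothesis $\sum_t \lambda_t \mu_t = 1$: indeed $(M^\top \mu)_{t_0} = \sum_t \lambda_t \mu_t = 1$ and $(M^\top \mu)_{t'} = \mu_{t'} - \mu_{t'} \cdot 1 = 0$ for $t' \neq t_0$. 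For invertibility, I check that $\{\lambda\} \cup \{e_{t'} - \mu_{t'}\lambda : t' \neq t_0\}$ is a basis of $\K^{(\T)}$: pairing any linear relation with $\mu$ isolates the coefficient of $\lambda$, then $\lambda_{t_0} \neq 0$ forces the remaining coefficients to vanish; spanning follows because the identity $\lambda = \lambda_{t_0} e_{t_0} + \sum_{t' \neq t_0} \lambda_{t'} e_{t'}$ allows us to solve for $e_{t_0}$.

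For the general case, $\sum_t \lambda_t \mu_t = 1 \neq 0$ forces the existence of some $t_1 \in \T$ with $\lambda_{t_1} \neq 0$. The previous construction applied at $t_1$ yields an isomorphism $(\g_{\D,\T}, \bullet_{e_{t_1}}, \rho_{e_{t_1}}) \xrightarrow{\sim} (\g_{\D,\T}, \bullet_\lambda, \rho_\mu)$. Taking $\tau \in \M_\T(\K)$ to be the permutation matrix transposing $t_0$ and $t_1$, one has $\tau e_{t_0} = e_{t_1}$ and $\tau^\top e_{t_1} = e_{t_0}$, so $\phi_\tau$ provides an isomorphism $(\g_{\D,\T}, \bullet_{e_{t_0}}, \rho_{e_{t_0}}) \xrightarrow{\sim} (\g_{\D,\T}, \bullet_{e_{t_1}}, \rho_{e_{t_1}})$, and composing the two yields the desired result. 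The main obstacle is the invertibility of $M$: any matrix with $\lambda$ in the $t_0$-th column must fail to be surjective when $\lambda_{t_0} = 0$, which is precisely why the two-step reduction via $t_1$ is needed for arbitrary $t_0$.
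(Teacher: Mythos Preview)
Your proof is correct. Both you and the paper build the isomorphism as $\phi_M$ for a suitable invertible $M$, but the reductions differ. The paper first picks any invertible $M_1$ with $M_1 e_{t_0}=\lambda$ (which exists since $\lambda\neq 0$) to reduce to the case $\lambda=e_{t_0}$, whence $\mu_{t_0}=1$; it then writes down an explicit second matrix $M_2$ fixing $e_{t_0}$ and satisfying $M_2^\top\mu=e_{t_0}$. You instead split on whether $\lambda_{t_0}\neq 0$: in that case you construct a single matrix handling both conditions simultaneously, and otherwise you permute $t_0$ to some $t_1$ with $\lambda_{t_1}\neq 0$. Your route is more explicit in the nondegenerate case; the paper's route avoids a case distinction and shows in effect that a single $M$ always exists (the composite $M_1M_2$).

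One remark in your closing sentence is false, though it does not affect the argument: it is not true that ``any matrix with $\lambda$ in the $t_0$-th column must fail to be surjective when $\lambda_{t_0}=0$''. For instance with $\T=\{t_0,t_1\}$ and $\lambda=e_{t_1}$, the transposition matrix has $\lambda$ as its $t_0$-th column and is invertible. The genuine obstruction is only that your \emph{particular} $M$ becomes singular when $\lambda_{t_0}=0$; the paper's construction shows a single invertible $M$ with $Me_{t_0}=\lambda$ and $M^\top\mu=e_{t_0}$ can always be found.
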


\begin{proof} Let us denote by $\lambda^{(0)}$ the element of $\K^{(\T)}$ defined by:
\[\lambda^{(0)}_t=\delta_{t,t_0}.\]
Note that for any $M\in \M_\T(\K)$, invertible,  
$\phi_M:(\g_{\D,\T},\bullet_{\lambda^{(0)}},\rho_{M^\top \mu})\longrightarrow 
(\g_{\D,\T},\bullet_{M\lambda^{(0)}},\rho_\mu)$ is an isomorphism.
In particular, for a well-chosen $M$, $M\lambda^{(0)}=\lambda$; we can assume that 
$\lambda=\lambda^{(0)}$ without loss of generality.
Then, by hypothesis, $\mu_{t_0}=1$. We define a matrix $M\in \M_\T(\K)$ in the following way:
\[m_{t,t'}=\begin{cases}
\delta_{t,t_0} \mbox{ if }t'=t_0,\\
\delta_{t,t'}-\mu_{t'} \delta_{t,t_0} \mbox{ otherwise}.
\end{cases}\]
Then $M$ is invertible. Moreover, $M\lambda^{(0)}=\lambda^{(0)}$ and $M^\top \mu=\lambda^{(0)}$. So $\phi_M$
is an isomorphism from $(\g_{\D,\T},\bullet_{\lambda^{(0)}},\rho_{\lambda^{(0)}})$
to $(\g_{\D,\T},\bullet_\lambda,\rho_\mu)$. \end{proof}

\begin{prop} \label{prop19}
Let $\lambda\in \K^{(\T)}$, and $t_0\in \T$. 
We define a pre-Lie algebra morphism $\psi_{t_0}:
(\g_{\bfT_{\D,\T}^{(t_0)}},\bullet)\longrightarrow (\g_{\D,\T},\bullet_\lambda)$,
sending $\tdun{$T$}$ to $T$ for any $T\in \bfT_{\D,\T}^{(t_0)}$. Then $\psi_{t_0}$ 
is a pre-Lie algebra isomorphism if, and only if, $\lambda_{t_0}\neq 0$.
\end{prop}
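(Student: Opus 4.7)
The plan is to handle the two implications separately: the \emph{if} direction follows immediately from the freeness results established in Corollaries~\ref{cor9} and~\ref{cor15}, while the \emph{only if} direction amounts to producing a proper $\bullet_\lambda$-stable subspace of $\g_{\D,\T}$ that must contain $\mathrm{Im}(\psi_{t_0})$ whenever $\lambda_{t_0}=0$.

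Suppose first that $\lambda_{t_0} \neq 0$. Applying Corollary~\ref{cor9} in the case of a single type of edges (which recovers the classical Chapoton--Livernet theorem), the source $(\g_{\bfT_{\D,\T}^{(t_0)}}, \bullet)$ is the free prelie algebra on $\bfT_{\D,\T}^{(t_0)}$. On the other hand, Corollary~\ref{cor15} asserts that $(\g_{\D,\T}, \bullet_\lambda)$ is itself freely generated as a prelie algebra by the \emph{same} set $\bfT_{\D,\T}^{(t_0)}$. Since $\psi_{t_0}$ is a prelie morphism sending the free generating family of the source bijectively onto the free generating family of the target, the universal property of free prelie algebras supplies a prelie inverse, namely the unique prelie morphism sending each $T \in \bfT_{\D,\T}^{(t_0)}$ back to $\tdun{$T$}$; hence $\psi_{t_0}$ is an isomorphism.

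For the converse I argue by contraposition. Assume $\lambda_{t_0} = 0$ and set $V := \mathrm{Vect}(\bfT_{\D,\T}^{(t_0)}) \subseteq \g_{\D,\T}$. The main (and essentially only) step is to check that $V$ is stable under $\bullet_\lambda$: for $T_1, T_2 \in \bfT_{\D,\T}^{(t_0)}$,
\[T_1 \bullet_\lambda T_2 = \sum_{t \in \T,\, t \neq t_0} \lambda_t \sum_{v \in V(T_1)} T_1 \bullet_t^{(v)} T_2,\]
and each summand lies in $\bfT_{\D,\T}^{(t_0)}$ by case analysis on $v$: if $v$ is an internal vertex of $T_1$, the edges outgoing the root of $T_1 \bullet_t^{(v)} T_2$ coincide with those of $T_1$ (none of type $t_0$ by hypothesis); if $v$ is the root of $T_1$, these edges are those of $T_1$ together with one new edge of type $t \neq t_0$. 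Thus $V$ is a prelie subalgebra of $(\g_{\D,\T}, \bullet_\lambda)$ containing every generator $\psi_{t_0}(\tdun{$T$}) = T$, so $\mathrm{Im}(\psi_{t_0}) \subseteq V$. But for any $d, d' \in \D$ the tree $B_d(\tdun{$d'$} \delta_{t_0})$ lies in $\bfT_{\D,\T} \setminus \bfT_{\D,\T}^{(t_0)}$, hence outside $V$, so $\psi_{t_0}$ fails to be surjective. The only genuinely delicate point is the stability verification above, which is a short case distinction on the grafting vertex; I do not expect any further difficulty.
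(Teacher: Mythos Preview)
Your proof is correct and follows essentially the same approach as the paper. For the \emph{if} direction, both you and the paper invoke Corollary~\ref{cor15} to identify $(\g_{\D,\T},\bullet_\lambda)$ as the free prelie algebra on $\bfT_{\D,\T}^{(t_0)}$, from which the isomorphism is immediate; for the \emph{only if} direction, the paper merely asserts that a two-vertex tree with its edge of type $t_0$ lies outside $\mathrm{Im}(\psi_{t_0})$, and your stability argument for $V=\mathrm{Vect}(\bfT_{\D,\T}^{(t_0)})$ is exactly the natural way to justify that assertion.
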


\begin{proof} If $\lambda_{t_0}\neq 0$, then by corollary \ref{cor15}, $(\g_{\D,\T},\bullet_\lambda)$ is freely generated by $\bfT_{\D,\T}^{(t_0)}$, so
$\psi_{t_0}$ is an isomorphism.
If $\lambda_{t_0}=0$, then it is not difficult to show that any tree $T$ with two vertices, with its unique edge of type $t_0$, does not belong to $\im(\psi_{t_0})$.
\end{proof}

\section{Hopf algebraic structures}

We here fix $\lambda\in \K^{(\T)}$.

\subsection{Enveloping algebra of $\g_{\D,\T}$}

Using again the Guin-Oudom construction, we obtain the enveloping algebra of $(\g_{\D,\T},\bullet_\lambda)$.
We first identify the symmetric coalgebra $S(\g_{\D,\T})$ with the vector space generated by $\bfF_{\D,\T}$, 
which we denote by $\h_{\D,\T}$.
Its product $m$ is given by disjoint union of forests, its coproduct by:
\begin{align*}
&\forall T_1,\ldots,T_k\in \bfT_{\D,\T},
&\Delta(T_1\ldots T_n)=\sum_{I\subseteq [n]} \prod_{i\in I}T_i\otimes \prod_{i\notin I} T_i.
\end{align*}
We denote by $\bullet_\lambda$ the Guin-Oudom extension of $\bullet_\lambda$ to $\h_{\D,\T}$ and $\star_\lambda$ the associated associative product. 

\begin{theo}
For any $F\in \bfF_{\D,\T}$, $T_1,\ldots,T_n \in \bfT_{\D,\T}$:
\begin{align*}
F\bullet_\lambda T_1\ldots T_n&=\sum_{\substack{v_1,\ldots,v_n \in V(F),\\ t_1,\ldots,t_n \in \T}}
\left(\prod_{i\in [n]} \lambda_{t_i}\right) \left(\ldots\left(F \bullet_{t_1}^{(v_1)}T_1\right)\ldots\right)\bullet_{t_n}^{(v_n)}T_n,\\
F\star_\lambda T_1\ldots T_n&=\sum_{I\subseteq [n]}  \left(F\bullet_\lambda 
\prod_{i\in I}T_i\right)\prod_{i\notin I} T_i.
\end{align*}
The Hopf algebra $(\h_{\D,\T},\star_\lambda,\Delta)$ is denoted by $\h_{\D,\T}^{GL_\lambda}$.
Moreover, for any $M\in \M_{\T,\T'}(\K)$, for any $\lambda\in \K^{(\T')}$,
$\Phi_M$ is a Hopf algebra morphism from $\h_{\D,\T'}^{GL_\lambda}$ to $\h_{\D,\T}^{GL_{M\lambda}}$.
The extension of $\psi_{t_0}$ as a Hopf algebra morphism from $\h_{\bfT_{\D,\T}^{(t_0)}}^{GL}$ to 
$\h_{\D,\T}^{GL_\lambda}$
is denoted by $\Psi_{t_0}$; it is an isomorphism if, and only if, $\lambda_{t_0}\neq 0$. 
\end{theo}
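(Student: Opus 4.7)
The core of the proof is the first formula, after which everything else follows from functoriality of Guin-Oudom and bookkeeping.

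First I would prove the formula for $F\bullet_\lambda T_1\ldots T_n$ by induction on $n$. The base case $n=0$ reduces to $F\bullet_\lambda 1=F$, which is part of the defining data of the Guin-Oudom product. For the inductive step, write the argument as $(T_1\ldots T_{n-1})\cdot T_n$ and use the characteristic identity
\[F\bullet_\lambda\bigl(w\cdot T_n\bigr)=(F\bullet_\lambda w)\bullet_\lambda T_n-F\bullet_\lambda(w\bullet_\lambda T_n),\]
together with the Leibniz-style extension of $\bullet_\lambda$ on $S(\g_{\D,\T})\otimes \g_{\D,\T}$. Expanding $(F\bullet_\lambda w)\bullet_\lambda T_n$ using the $n=1$ case splits the sum into a contribution grafting $T_n$ onto a vertex of $F$ (which survives in the right-hand side) and a contribution grafting $T_n$ onto a vertex of one of the $T_i$; the latter is exactly cancelled by $F\bullet_\lambda(w\bullet_\lambda T_n)$, leaving the desired formula. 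This cancellation is the main combinatorial point, and I expect it to be the only real obstacle: one must be careful to keep the two sums indexed over $v_i$ and $t_i$ in sync when regrouping.

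The formula for $\star_\lambda$ is just the Guin-Oudom definition $F\star_\lambda G=\sum(F\bullet_\lambda G^{(1)})G^{(2)}$, and specializing to $G=T_1\ldots T_n$ with the cocommutative coproduct on $S(\g_{\D,\T})$ produces the stated identity. That $(\h_{\D,\T},\star_\lambda,\Delta)$ is a cocommutative Hopf algebra is then the general Guin-Oudom theorem applied to the prelie algebra $(\g_{\D,\T},\bullet_\lambda)$: the output is the enveloping algebra of the associated Lie algebra.

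For $\Phi_M$: it is immediately an algebra morphism (edge relabelings commute with disjoint union of forests) and a coalgebra morphism (the coproduct depends only on the decomposition of a forest into its connected components, which is untouched by relabelings). To check compatibility with $\star_\lambda$ versus $\star_{M\lambda}$ it suffices, by the two formulas above, to verify
\[\Phi_M\bigl(F\bullet_\lambda T_1\ldots T_n\bigr)=\Phi_M(F)\bullet_{M\lambda}\Phi_M(T_1)\ldots\Phi_M(T_n),\]
which reduces, vertex by vertex, to $\Phi_M(F\bullet_t^{(v)}T)=\sum_{s\in\T}m_{s,t}\,\Phi_M(F)\bullet_s^{(v)}\Phi_M(T)$; this is exactly what was verified at the prelie level for $\phi_M$.

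Finally, for $\Psi_{t_0}$: the Guin-Oudom construction is functorial, so any prelie algebra morphism induces a morphism between the corresponding enveloping Hopf algebras. By Proposition~\ref{prop19}, $\psi_{t_0}$ is a prelie morphism in general and a prelie isomorphism precisely when $\lambda_{t_0}\neq 0$; functoriality then transfers this to $\Psi_{t_0}$. The only subtlety is the converse direction: when $\lambda_{t_0}=0$, I would rule out surjectivity of $\Psi_{t_0}$ at the grading of two vertices, observing as in Proposition~\ref{prop19} that the two-vertex tree whose unique edge has type $t_0$ does not lie in the image (the coproduct distinguishes it from any product of one-vertex pieces produced by $\Psi_{t_0}$), so $\Psi_{t_0}$ cannot be a Hopf isomorphism.
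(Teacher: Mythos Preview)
The paper states this theorem without proof, treating it as a direct consequence of the general Guin--Oudom construction applied to the prelie algebra $(\g_{\D,\T},\bullet_\lambda)$, together with the prelie-level results already established (the Proposition showing $\phi_M$ is a prelie morphism, and Proposition~\ref{prop19} for $\psi_{t_0}$). Your proposal correctly supplies the details the paper leaves implicit, and the approach you outline---induction on $n$ for the first formula via the recursive identity of the Guin--Oudom extension, the definition of $\star_\lambda$ for the second, and functoriality of Guin--Oudom for the morphism statements---is exactly the intended one.

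One small simplification for the ``only if'' direction on $\Psi_{t_0}$: since $\Psi_{t_0}$ is a Hopf algebra morphism between enveloping algebras extending $\psi_{t_0}$, its restriction to primitive elements recovers $\psi_{t_0}$; hence if $\Psi_{t_0}$ were an isomorphism, so would be $\psi_{t_0}$, and Proposition~\ref{prop19} then forces $\lambda_{t_0}\neq 0$. This avoids the need to argue separately about coproducts distinguishing the two-vertex tree.
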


In particular, if $\T=\{t\}$ and $\lambda_t=1$, we recover the Grossman-Larson Hopf algebra \cite{GrossmanLarson}.

\subsection{Dual construction}

\begin{prop}
Let $T\in \bfT_{\D,\T}$. \begin{enumerate}
\item A cut $c$ of $T$ is a nonempty subset of $E(T)$; it is said to be admissible 
if any path in the tree from the root to a leaf
meets at most one edge in $c$. The set of admissible cuts of $T$ is denoted by $\adm(T)$.
\item If $c$ is admissible, one of the connected components of $T\setminus c$ contains the root of $c$:
we denote it by $R^c(T)$. The product of the other connected components of $T\setminus c$ is denoted by $P^c(T)$.  
\end{enumerate}
Let $\lambda\in \K^\T$. We define a multiplicative coproduct $\Delta^{CK_\lambda}$ on the algebra $(\h_{\D,\T},m)$ by:
\begin{align*}
&\forall T\in \bfT_{\D,\T},& \Delta^{CK_\lambda}(T)&=T\otimes 1+1\otimes T+\sum_{c\in \adm(T)} 
\left(\prod_{e\in c}\lambda_{\type(e)}\right) R^c(T)\otimes P^c(T). 
\end{align*}
Then $(\h_{\D,\T},m,\Delta^{CK_\lambda})$ is a Hopf algebra, which we denote by $\h_{\D,\T}^{CK_\lambda}$.
\end{prop}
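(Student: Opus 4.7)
The plan is to verify that $\Delta^{CK_\lambda}$ is coassociative and counital. Once this is done, the bialgebra axioms are immediate (the coproduct is multiplicative by definition and the product is the free commutative algebra product on $\bfT_{\D,\T}$), and the existence of the antipode follows at once from the fact that $\h_{\D,\T}$, graded by the number of vertices, is a connected graded bialgebra.

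First I would dispose of the counit. Let $\varepsilon:\h_{\D,\T}\longrightarrow \K$ be the algebra morphism sending every nonempty forest to $0$ and the empty forest to $1$. For $T\in \bfT_{\D,\T}$ and $c\in Adm(T)$, both $R^c(T)$ and $P^c(T)$ are nonempty forests, so only the terms $T\otimes 1$ and $1\otimes T$ survive after applying $\varepsilon\otimes \Id$ or $\Id\otimes \varepsilon$. Thus $\varepsilon$ is a counit on generators, hence on $\h_{\D,\T}$ by multiplicativity.

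The heart of the proof is coassociativity. I would first reduce it to the case of a single tree: since $\Delta^{CK_\lambda}$ is an algebra morphism and $m$ is commutative, $(\Delta^{CK_\lambda}\otimes\Id)\circ\Delta^{CK_\lambda}$ and $(\Id\otimes\Delta^{CK_\lambda})\circ\Delta^{CK_\lambda}$ are both algebra morphisms, so it is enough to compare them on $T\in \bfT_{\D,\T}$. The standard trick, carried over from the untyped Connes--Kreimer case, is to introduce the notion of a pair $(c_1,c_2)$ of nested admissible cuts of $T$: $c_1\in Adm(T)$, and $c_2$ is an admissible cut of the forest $P^{c_1}(T)$, equivalently $c_1\sqcup c_2\subseteq E(T)$ is a set of edges such that no root-to-leaf path meets more than one edge of $c_1\cup c_2$ and the edges of $c_2$ lie strictly above those of $c_1$. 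To each such pair one associates the three-factor tensor
\[
R^{c_1}(T)\otimes R^{c_2}\bigl(P^{c_1}(T)\bigr)\otimes P^{c_2}\bigl(P^{c_1}(T)\bigr),
\]
weighted by the coefficient $\prod_{e\in c_1\cup c_2}\lambda_{\type(e)}$. Unwinding the definitions on each side gives, up to the boundary terms involving empty cuts, the sum over all nested pairs of this weighted tensor; since the parametrization is intrinsically symmetric in the roles played by the two cuts and the weight depends only on the underlying set $c_1\cup c_2$ of cut edges together with their types, both iterated coproducts yield the same expression. The boundary terms where $c_1$ or $c_2$ is empty are handled by direct inspection using the counit computation above.

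The main obstacle is purely bookkeeping: carefully stating the bijection between iterated admissible cuts on each side and nested pairs, and checking that the multiplicative extension of $\Delta^{CK_\lambda}$ from trees to forests is compatible with the nested-cut description (for a forest $T_1\cdots T_n$, an admissible cut is the disjoint union of admissible cuts on each $T_i$, and the $\lambda$ weight factors as a product, so coassociativity on forests follows from coassociativity on trees). The typed refinement is painless since the weight is a product over edges and types are preserved by restricting to subforests $R^c(T)$ and $P^c(T)$. Finally, $\h_{\D,\T}$ being graded by $|V(F)|$ with one-dimensional degree zero part, the antipode is defined by the usual recursive formula $S=-\Id -S\star \tilde\Delta^{CK_\lambda}$ on the augmentation ideal, yielding the desired Hopf algebra structure.
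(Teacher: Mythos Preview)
Your argument is correct, but it takes a different route from the paper. You verify the bialgebra axioms directly: counit, then coassociativity via the standard nested-admissible-cuts bijection, then antipode by connectedness. This is the classical Connes--Kreimer argument, and the typed weights $\prod_{e\in c}\lambda_{\type(e)}$ indeed factor correctly through the bijection since they depend only on the multiset of cut edges.

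The paper instead proves the result by \emph{duality}. It first restricts to $\lambda\in\K^{(\T)}$, introduces the symmetry-weighted pairing $\langle F,F'\rangle=\delta_{F,F'}s_F$ on $\h_{\D,\T}$, and shows that under this pairing the Grossman--Larson product $\star_\lambda$ (built from the Guin--Oudom extension of $\bullet_\lambda$) is dual to $\Delta^{CK_\lambda}$, while the disjoint-union product $m$ is dual to the shuffle coproduct $\Delta$. Since $(\h_{\D,\T},\star_\lambda,\Delta)$ is already known to be a Hopf algebra, nondegeneracy of the pairing transfers the Hopf structure to $(\h_{\D,\T},m,\Delta^{CK_\lambda})$. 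The general case $\lambda\in\K^\T$ is then handled by restricting any given element to a finite sub-type-set $\T'$.

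Your approach is more elementary and self-contained (it does not rely on the Guin--Oudom machinery or on $\h_{\D,\T}^{GL_\lambda}$), and it works uniformly for all $\lambda\in\K^\T$ without the finite-support reduction. The paper's approach, on the other hand, yields the duality between $\h_{\D,\T}^{CK_\lambda}$ and $\h_{\D,\T}^{GL_\lambda}$ as a byproduct, which is exploited later (for instance in the proof of Proposition~\ref{prop26}, where $\Psi_{t_0}^*$ is identified as the transpose of $\Psi_{t_0}$).
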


\begin{proof} We first assume that $\lambda\in \K^{(\T)}$. 
Let us define a nondegenerate pairing $\langle-,-\rangle$ on $\h_{\D,\T}$ by:
\begin{align*}
&\forall F,F'\in \bfF_{\D,\T},&\langle F,F'\rangle&=\delta_{F,F'}s_F,
\end{align*}
where $s_F$ is the number of symmetries of $F$.
Let us consider three forests $F,F',F''$. We put:
\begin{align*}
F&=\prod_{T\in \bfT_{\D,\T}} T^{\lambda_t},&F'&=\prod_{T\in \bfT_{\D,\T}} T^{a'_T},&F''&=\prod_{T\in \bfT_{\D,\T}} T^{a''_T}.
\end{align*}
Then:
\begin{align*}
\langle \Delta(F),F'\otimes F''\rangle&=\sum_{a=b+c} \prod_{T\in\bfT_{\D,\T}}\frac{\lambda_t!}{\mu_t!c_T!}
\langle \prod_{T\in \bfT_{\D,\T}} T^{\mu_t},F'\rangle \langle \prod_{T\in \bfT_{\D,\T}} T^{c_T},F''\rangle\\
 &=\sum_{a=b+c}\delta_{b,a'}\delta_{c,a''}\frac{\lambda_t!}{a'_T!a''_T!} s_{F'}s_{F''}\\
 &=\delta_{a,a'+a''}\frac{\lambda_t!}{a'_T!a''_T!} a'_T!a''_T! s_{T}^{a'_T+a''_T}\\
 &=\delta_{a,a'+a''}\lambda_t!s_{T}^{\lambda_t}\\
 &=\delta_{F,F'F''}s_F\\
 &=\langle F,F'F''\rangle.
\end{align*}
Therefore:
\begin{align*}
&\forall x,y,z\in \h_{\D,\T},&\langle \Delta(x),y\otimes z\rangle=\langle x,yz\rangle.
\end{align*}

Let $F,G$ be two forests and $T$ be a tree. Observe that if $F$ is a forest with at least two trees,
then $F\star_\lambda G$ does not contain any tree, so $\langle F\star_\lambda G,T\rangle=0$.
If $F=1$, then $\langle F\star_\lambda G,T\rangle\neq 0$ if, and only if, $G=T$; moreover, $\langle 1\star_\lambda T,T\rangle=1$.
If $F$ is a tree, then:
\[\langle F\star_\lambda G,T\rangle=\langle F\bullet_\lambda G,T\rangle.\]
Moreover, if $F=B_d(F')$ and $G=T_1\ldots T_k$:
\[F\bullet_\lambda G=\sum_{I\subseteq [k]}\: \sum_{(t_i)\in \T^k}\:
\left(\prod_{i\in [k]} \lambda_{t_i}\right) B_d\left(\prod_{i\in I} T_i\delta_{t_i} 
F'\bullet \prod_{i\notin I} T_i\delta_{t_i}\right),\]
where $\bullet$ is the pre-Lie product on $\g_{\D,\T}^\T$ induced by the $\T$-multiplie pre-Lie structure.
Consequently, we can inductively define a coproduct $\Delta^{CK_\lambda}:\h_{\D,\T}\longrightarrow 
\h_{\D,\T}\otimes \h_{\D,\T}$,
multiplicative for the product $m$, such that, if we denote for any tree $T$, 
$\overline{\Delta}_{CK}(T)=\Delta(T)-1\otimes T$,
for any tree $T=B_d(T_1\delta_{t_1}\ldots T_k\delta_{t_k})$:
\begin{align}
\label{EQ3} \overline{\Delta}^{CK}_\lambda(T)&=(B_d\otimes \id)\left(\prod_{i\in [k]}
(\overline{\Delta}_\lambda^{CK}(T_i)\delta_{t_i}\otimes 1+\lambda_{t_i}1\otimes T_i)\right).
\end{align} 
Then, for any $x,y,z\in \h_{\D,\T}$:
\[\langle x\star_\lambda y,z\rangle=\langle x\otimes y,\Delta^{CK_\lambda}(z)\rangle.\]
A quite easy induction on the number of vertices of trees proves that this coproduct is indeed 
the one we define in the statement of the proposition. As $\langle-,-\rangle$ is nondegenerate, 
$(\h_{\D,\T},m,\Delta^{CK_\lambda})$ is a Hopf algebra, dual to $\h_{\D,\T}^{GL_\lambda}$.\\

In the general case, for any $x\in \h_{\D,\T}$, there exists a finite subset $\T'$ of $\T$ such that 
$x\in \h_{\D,\T'}$. Putting $\lambda'=\lambda_{\mid \T'}$, $\lambda'\in \K^{\T'}=\K^{(\T')}$, so:
\begin{align*}
(\Delta^{CK_\lambda}\otimes \id)\circ \Delta^{CK_\lambda}(x)
&=(\Delta^{CK_{\lambda'}}\otimes \id)\circ \Delta^{CK_{\lambda'}}(x)\\
&=(\id \otimes \Delta^{CK_{\lambda'}})\circ \Delta^{CK_{\lambda'}}(x)\\
&=(\id \otimes \Delta^{CK_\lambda})\circ \Delta^{CK_\lambda}(x).
\end{align*}
Hence, $\Delta_\lambda$ is coassociative, and $\h_{\D,\T}^{CK_\lambda}$ is a Hopf algebra. \end{proof}

\begin{example}
Let us fix a subset $\T'$ of $\T$ and choose $(\lambda_t)_{t\in \T}$ such that:
\[\lambda_t=\begin{cases}
1\mbox{ if }t\in \T',\\
0\mbox{ otherwise}.
\end{cases}\]
For any tree $T\in \bfT_{\D,\T}$, let us denote by $\adm_{\T'}(T)$ the set of admissible cuts $c$ of $T$ such that
the type of any edge in $c$ belongs to $\T'$. Then:
\begin{align*}
\Delta^{CK_\lambda}(T)&=T\otimes 1+1\otimes T+\sum_{c\in \adm_{\T'}(T)} R^c(T)\otimes P^c(T).
\end{align*}\end{example}

\begin{remark} \begin{enumerate}
\item If $\T=\{t\}$ and $\lambda_t=1$, we recover the usual Connes-Kreimer Hopf algebra of $\D$-decorated rooted trees, 
which we denote by $\h_{\D}^{CK}$, and its duality with the Grossman-Larson Hopf algebra
\cite{ConnesKreimer,Hoffman,Panaite}.
\item If $\T$ and $\D$ are finite, for any $\lambda\in \K^\T$, both $\h_{\D,\T}^{CK_\lambda}$ and $\h_{\D,\T}^{GL_\lambda}$ 
are graded Hopf algebra (by the number of vertices),
and their homogeneous components are finite-dimensional. 
Via the pairing $\langle-,-\rangle$, each one is the graded dual of the other.
\end{enumerate} \end{remark}

\subsection{Hochschild cohomology of coalgebras}

For the sake of simplicity, we assume that the set of types $\T$ is finite
and we put $\T=\{t_1,\ldots,t_N\}$.\\

Let $(C,\Delta)$ be a coalgebra and let $(M,\delta_L,\delta_R)$ be a $C$-bicomodule.
We define a complex, dual to the Hochschild complex for algebras, in the following way:
\begin{enumerate}
\item For any $n\geq 0$, $H_n=\mathcal{L}(M,C^{\otimes n})$.
\item For any $L\in H_n$:
\[b_n(L)=(\id\otimes L)\circ \delta_L+\sum_{i=1}^n (-1)^i (\id^{\otimes (i-1)}\otimes \Delta \otimes \id^{\otimes(n-i)})
\circ L+(-1)^{n+1}(L\otimes \id)\circ \delta_R.\]
\end{enumerate}
In particular, one-cocycles are maps $L:M\longrightarrow C$ such that:
\[\Delta \circ L=(\id\otimes L)\circ \delta_L+(L\otimes \id)\circ \delta_R.\]
We shall consider in particular the bicomodule $(M,\delta_L,\delta_R)$ such that:
\begin{align*}
&\forall x\in C,& \begin{cases}
\delta_L(x)=1\otimes x,\\
\delta_R(x)=\Delta(x).
\end{cases} \end{align*}
If $C$ is a bialgebra, then $M^{\otimes N}$ is also a bicomodule:
\begin{align*}
&\forall x_t\in C,&\begin{cases}
\displaystyle \delta_L\left(\bigotimes_{1\leq i \leq N} x_i\right)=1\otimes \bigotimes_{1\leq i \leq N} x_i,\\[5mm]
\displaystyle \delta_R\left(\bigotimes_{1\leq i \leq N} x_i\right)
=\bigotimes_{1\leq i \leq N} x_i^{(1)}\otimes \prod_{1\leq i \leq N} x_i^{(2)}.
\end{cases} \end{align*}
We denote by $\underline{1}=(1)_{t\in \T} \in \K^\T$, and we take $C=\h_{\D,\T}^{CK_{\underline{1}}}$.
One can identify $S\left(Vect(\bfT_{\D,\T})^{\oplus \T}\right)$ and $C^{\otimes N}$,
$x\delta_{T_i}$ being identified with $1^{\otimes (i-1)}\otimes x\otimes 1^{\otimes (n-i)}$
for any $x\in \bfT_{\D,\T}$ and $1\leq i \leq N$. Then for any $d$,
$B_d:C^{\otimes N}\longrightarrow C$ is a 1-cocycle. Moreover, there is a universal property,
proved in the same way as for the Connes-Kreimer's one \cite{ConnesKreimer}:

\begin{theo} \label{theo22}
Let $B$ be a commutative bialgebra and, for any $d\in \D$, let $L_d:C^{\otimes N}\longrightarrow C$
be a 1-cocycle:
\begin{align*}
&\forall d\in \D,\:\forall x_t\in B,&
\Delta \circ L_d\left( \bigotimes_{1\leq i \leq N} x_i\right)
&=1\otimes \bigotimes_{1\leq i \leq N} x_i+ L_d\left( \bigotimes_{1\leq i \leq N} x_i^{(1)}\right)
\otimes \prod_{1\leq i \leq N} x_i^{(2)}.
\end{align*}
There exists a unique bialgebra morphism $\phi:\h_{\D,\T}^{CK_{\underline{1}}}\longrightarrow C$
such that for any $d\in \D$, $\phi\circ L_d=B_d \circ\phi^{\otimes N}$.
\end{theo}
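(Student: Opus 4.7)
The plan is to mimic the classical Connes--Kreimer argument, building $\phi$ one vertex at a time and then checking the bialgebra axioms.

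\textbf{Construction of $\phi$.} I would define $\phi$ on trees by induction on $|V(T)|$, and extend multiplicatively to forests. Since every $T\in \bfT_{\D,\T}$ admits a unique decomposition $T=B_d(F)$ with $F\in S(\mathrm{Vect}(\bfT_{\D,\T})^{\oplus \T})$, and since under the identification of $S(\mathrm{Vect}(\bfT_{\D,\T})^{\oplus \T})$ with $C^{\otimes N}$ the factors of $F$ involve only trees of strictly smaller size, one may set
\[
\phi(T)\;=\;L_d\bigl(\phi^{\otimes N}(F)\bigr),
\]
where $\phi^{\otimes N}$ uses the already-defined values of $\phi$ in each tensor slot (with $\phi$ an algebra morphism on forests). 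Extending $\phi$ as an algebra morphism $\h_{\D,\T}^{CK_{\underline 1}}\to B$ (which is consistent because $\h_{\D,\T}^{CK_{\underline 1}}$ is the polynomial algebra on trees) yields the candidate morphism; by construction $\phi\circ B_d=L_d\circ \phi^{\otimes N}$.

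\textbf{Uniqueness.} Any bialgebra morphism satisfying the compatibility with the $B_d$'s is forced to take the value $L_d(\phi^{\otimes N}(F))$ on $B_d(F)$, hence is determined on all trees by induction on size, and then on all forests by multiplicativity.

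\textbf{Coproduct compatibility.} The heart of the proof is to show
\[
\Delta_B\circ \phi=(\phi\otimes \phi)\circ \Delta^{CK_{\underline 1}},
\]
which I would prove by induction on $|V(T)|$ for $T$ a tree, then extend multiplicatively (using that $B$ is commutative, so $\Delta_B$ is an algebra morphism). For $|V(T)|=1$, both sides equal $\phi(T)\otimes 1+1\otimes \phi(T)$, using the cocycle identity applied to $1^{\otimes N}$. For the induction step, write $T=B_d(F)$ and apply the cocycle property of $L_d$ to $\phi^{\otimes N}(F)$:
\[
\Delta_B\bigl(L_d(\phi^{\otimes N}(F))\bigr)\;=\;1\otimes L_d(\phi^{\otimes N}(F))\;+\;(L_d\otimes \mathrm{Id})\circ \delta_R\bigl(\phi^{\otimes N}(F)\bigr).
\]
On the other side, formula (\ref{EQ3}) gives an explicit expression for $\overline{\Delta}^{CK_{\underline 1}}(T)$ in terms of $\overline{\Delta}^{CK_{\underline 1}}(T_i)$ and the trees $T_i$; applying $\phi\otimes \phi$ and invoking the induction hypothesis inside each tensor slot transforms this expression into exactly the right-hand side above, once one recognises that the definition of $\delta_R$ on $C^{\otimes N}$ (multiplication of the right legs of the $\Delta_B(\phi(T_i))$'s inside $B$) matches the product $\prod_{1\le i\le N}x_i^{(2)}$ appearing in the statement of the cocycle condition.

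\textbf{Main obstacle.} The substantive difficulty is purely notational: one must keep track of how the $N$ tensor slots interact when $\overline{\Delta}^{CK_{\underline 1}}$ distributes over the product $\prod_i(\overline{\Delta}^{CK_{\underline 1}}(T_i)\delta_{t_i}\otimes 1+1\otimes T_i)$ and when $\delta_R$ multiplies the right legs in $B$. Using the $\delta_{t_i}$ formalism already in place, the matching is bijective on terms, but writing it out cleanly is the only place real care is needed; commutativity of $B$ is essential to absorb the reorderings that occur when collapsing the $N$ right legs into a single product.
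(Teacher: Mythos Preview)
Your proposal is correct and follows exactly the classical Connes--Kreimer argument, which is precisely what the paper invokes: it does not spell out a proof but states that the theorem ``is proved in the same way as for the Connes--Kreimer's one \cite{ConnesKreimer}''. You have also silently corrected the direction of the intertwining relation (it should indeed read $\phi\circ B_d=L_d\circ\phi^{\otimes N}$, with $L_d:B^{\otimes N}\to B$), which is the only way the induction makes sense.
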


\subsection{Hopf algebra morphisms}

Our aim is, firstly, to construct Hopf algebras morphisms between 
$\h_{\D,\T}^{CK_\lambda}$ and $\h_{\D,\T}^{CK_\mu}$;
secondly, to construct Hopf algebra isomorphisms between $\h_{\D,\T}^{CK_\lambda}$ 
and $\h_{\D'}^{CK}$ for a well-chosen $\D'$.

\begin{prop}
Let $M\in \M_{\T,\T'}(\K)$, $\lambda\in \K^{\T}$. Then $\Phi_M:\h_{\D,\T'}
^{CK_{M^\top \lambda}}\longrightarrow \h_{\D,\T}^{CK_\lambda}$ is a Hopf algebra morphism. 
\end{prop}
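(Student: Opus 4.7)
The plan is first to check that $\Phi_M$ is an algebra morphism, which is essentially immediate from its definition: the product on both $\h_{\D,\T}$ and $\h_{\D,\T'}$ is the disjoint union of forests, so the edge set of $F\cdot F'$ is $E(F)\sqcup E(F')$, and the simultaneous linear replacement $\type(e)\mapsto \sum_{t\in\T} m_{t,\type(e)}t$ applied to each edge factors as a product over the two pieces. The compatibility with the counit $\varepsilon$ (which vanishes except on the empty forest $1$) is also clear, since $\Phi_M(1)=1$ and $\Phi_M$ preserves the grading by number of vertices.

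The essential content is thus to show
\[
\Delta^{CK_\lambda}\circ \Phi_M=(\Phi_M\otimes \Phi_M)\circ \Delta^{CK_{M^\top\lambda}}.
\]
Because both coproducts are multiplicative for $m$ and $\Phi_M$ is multiplicative, I only need to verify this identity on trees $T\in \bfT_{\D,\T'}$. For a tree $T$, expand
\[
\Phi_M(T)=\sum_{\tau:E(T)\to\T}\Bigl(\prod_{e\in E(T)} m_{\tau(e),\type(e)}\Bigr) T_\tau,
\]
where $T_\tau$ is $T$ with edge types reassigned according to $\tau$. The set $\mathrm{Adm}(T_\tau)$ is canonically identified with $\mathrm{Adm}(T)$ (admissibility depends only on the underlying rooted tree), and for any admissible cut $c$ one has $R^c(T_\tau)=R^c(T)_{\tau|_{R^c}}$ and $P^c(T_\tau)=P^c(T)_{\tau|_{P^c}}$.

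Applying $\Delta^{CK_\lambda}$ and grouping the sum over $\tau$ edge by edge, the contribution of each edge factorises: for $e\in c$ the factor is $\sum_{t\in\T}m_{t,\type(e)}\lambda_t=(M^\top\lambda)_{\type(e)}$, while for $e\notin c$ the edge stays inside $R^c(T)$ or $P^c(T)$ and its partial sum $\sum_{t\in\T}m_{t,\type(e)}t$ is exactly the action of $\Phi_M$. Hence
\begin{align*}
\Delta^{CK_\lambda}(\Phi_M(T)) &=\Phi_M(T)\otimes 1+1\otimes \Phi_M(T)\\
&\quad +\sum_{c\in \mathrm{Adm}(T)}\Bigl(\prod_{e\in c}(M^\top\lambda)_{\type(e)}\Bigr)\Phi_M(R^c(T))\otimes \Phi_M(P^c(T)),
\end{align*}
which is exactly $(\Phi_M\otimes\Phi_M)\circ \Delta^{CK_{M^\top\lambda}}(T)$.

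The only real obstacle is the combinatorial bookkeeping that the weights $m_{\tau(e),\type(e)}$ coming from $\Phi_M$ split cleanly between the cut edges (combining with $\lambda$ to give $M^\top\lambda$) and the non-cut edges (reassembling into $\Phi_M$ applied to each piece). Once this factorisation is in place, the result follows on trees, extends to forests by multiplicativity, and is automatically a Hopf algebra morphism since both sides are bialgebras graded by the number of vertices with one-dimensional degree-zero part, so the antipode is determined by the bialgebra structure.
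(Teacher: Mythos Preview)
Your proposal is correct and follows essentially the same approach as the paper: note that $\Phi_M$ is an algebra morphism, then verify the coalgebra compatibility on a single tree by observing that the edge-by-edge factorisation sends cut-edge weights to $(M^\top\lambda)_{\type(e)}$ and reassembles the remaining edges into $\Phi_M$ on each piece. The paper's proof is terser (it does not write out the expansion over $\tau:E(T)\to\T$ explicitly), but the mechanism is identical.
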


\begin{proof} $\Phi_M$ is a obviously an algebra morphism. Let $T\in \bfT_{\D,\T}$.
\begin{align*}
\Delta_\lambda\circ \Phi_M(T)&=\Phi_M(T)\otimes 1+1\otimes \Phi_M(T)\\
&+\sum_{c\in \adm(T)} \prod_{e\in c}\left(\sum_{t\in \T} m_{t,\type(e)}\lambda_t\right) 
\Phi_M(R^c(T))\otimes \Phi_M(P^c(T))\\
&=\Phi_M(T)\otimes 1+1\otimes \Phi_M(T)\\
&+\sum_{c\in \adm(T)} \prod_{e\in c}(M^\top \lambda)_{\type(e)} \Phi_M(R^c(T))\otimes \Phi_M(P^c(T))\\
&=(\Phi_M\otimes \Phi_M)\circ \Delta_{M^\top a}(T).
\end{align*}
So $\Phi_M$ is a coalgebra morphism from $\h_{\D,\T'}^{CK_{M^\top \lambda}}$ to $\h_{\D,\T}^{CK_\lambda}$. \end{proof}

\begin{cor} \label{cor24}
Let $\lambda,\mu \in \K^\T$, both nonzero. Then $\h_{\D,\T}^{CK_\lambda}$ and $\h_{\D,\T}^{CK_\mu}$
 are isomorphic Hopf algebras.
\end{cor}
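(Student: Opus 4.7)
The plan is to apply the preceding Proposition together with the composition identity $\Phi_M\circ\Phi_{M'}=\Phi_{MM'}$ recorded in the earlier Remark: the latter forces $\Phi_M$ to be a Hopf algebra isomorphism whenever $M\in\M_\T(\K)$ is invertible in that algebra (and clearly $\Phi$ sends the identity matrix to the identity map). It therefore suffices to construct an invertible $M\in\M_\T(\K)$ such that $M^\top\lambda=\mu$, for then $\Phi_M:\h_{\D,\T}^{CK_\mu}\longrightarrow\h_{\D,\T}^{CK_\lambda}$ is the required isomorphism.

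To produce such an $M$ I would go through a common normal form. Fix $t_*\in\T$ once and for all and let $\lambda^{(*)}\in\K^{(\T)}$ be its indicator (value $1$ at $t_*$, zero elsewhere). I claim that for every nonzero $\nu\in\K^\T$ there is an invertible $N_\nu\in\M_\T(\K)$ with $N_\nu^\top\nu=\lambda^{(*)}$. First, if $\nu_{t_*}\neq 0$, take $N_\nu$ equal to the identity on every row other than $t_*$, and on row $t_*$ set $m_{t_*,t_*}=1/\nu_{t_*}$ and $m_{t_*,t}=-\nu_t/\nu_{t_*}$ for $t\neq t_*$. A direct calculation yields $N_\nu^\top\nu=\lambda^{(*)}$; each column of $N_\nu$ carries at most two nonzero entries, so $N_\nu\in\M_\T(\K)$, and its inverse -- identity on every row other than $t_*$, with row $t_*$ equal to $\nu$ -- likewise has finitely supported columns, hence also lies in $\M_\T(\K)$. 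Second, if $\nu_{t_*}=0$, pick $t_1\in\T$ with $\nu_{t_1}\neq 0$ and pre-compose with the transposition matrix $P_{t_*,t_1}$ swapping these two indices: $P_{t_*,t_1}$ is visibly in $\M_\T(\K)$ and self-inverse, and $P_{t_*,t_1}^\top\nu$ has a nonzero $t_*$-coordinate, reducing to the previous case.

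Applying the claim to both $\lambda$ and $\mu$ furnishes invertible matrices $N_\lambda,N_\mu\in\M_\T(\K)$ with $N_\lambda^\top\lambda=N_\mu^\top\mu=\lambda^{(*)}$. Setting $M=N_\lambda N_\mu^{-1}\in\M_\T(\K)$ then produces an invertible matrix satisfying $M^\top\lambda=(N_\mu^{-1})^\top N_\lambda^\top\lambda=(N_\mu^{-1})^\top\lambda^{(*)}=\mu$, so that $\Phi_M$ is the desired Hopf algebra isomorphism between $\h_{\D,\T}^{CK_\mu}$ and $\h_{\D,\T}^{CK_\lambda}$.

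The delicate point to watch is the finite-support-in-each-column constraint built into the definition of $\M_\T(\K)$: a generic invertible linear endomorphism of $\K^\T$ need not be represented by a matrix with finitely supported columns, and even when it is, its inverse need not be. This is precisely what forces the rigid ``elementary row-operation'' shape of $N_\nu$ above and obliges me to handle the case $\nu_{t_*}=0$ via a single transposition rather than by any more cavalier linear manoeuvre; once this constraint is addressed, the corollary follows formally from the preceding Proposition.
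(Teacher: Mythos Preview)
Your proof is correct and follows the same approach as the paper: find an invertible $M\in\M_\T(\K)$ with $M^\top\lambda=\mu$ and apply the preceding Proposition together with $\Phi_M\circ\Phi_{M'}=\Phi_{MM'}$. The paper's proof simply asserts the existence of such an $M$ in one line, whereas you supply an explicit construction and take care that both $N_\nu$ and its inverse have finitely supported columns when $\T$ is infinite---a point the paper leaves implicit.
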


\begin{proof} There exists $M\in \M_\T(\K)$, invertible, such that $M^\top \lambda=\mu$.
Then $\Phi_M$ is an isomorphism between $\h_{\D,\T}^{CK_\mu}$ and $\h_{\D,\T}^{CK_\lambda}$. \end{proof}

\begin{defi}
Let us fix $t_0\in \T$. For any $F \in \bfF_{\D,\T}$, we shall say that $\{T_1,\ldots,T_k\}\triangleleft_{t_0} F$ 
if the following conditions hold:
\begin{itemize}
\item $\{T_1,\ldots,T_k\}$ is a partition of $V(F)$. Consequently, for any $i\in [k]$, $T_i\in \bfF_{\D,\T}$, 
by restriction.
\item For any $i\in [k]$, $T_i\in \bfT_{\D,\T}^{(t_0)}$.
\end{itemize}
If $\{T_1,\ldots,T_k\}\triangleleft_{t_0} F$, we denote by $F/\{T_1,\ldots,T_k\}$ 
the forest obtained by contracting $T_i$ to a single vertex
for any $i\in [k]$, decorating this vertex by $T_i$, and forgetting the type of the remaining edges. 
Then $F/\{T_1,\ldots,T_k\}$
is a $\T_{\D,\T}^{(t_0)}$-decorated forest.
\end{defi}

\begin{prop} \label{prop26}
Let $\lambda\in \K^\T$, $t_0\in \T$. Let us consider the map:
\[\Psi_{t_0}^*:\left\{\begin{array}{rcl}
\h_{\D,\T}&\longrightarrow&\h_{\bfT_{\D,\T}^{(t_0)}}\\
F\in \bfF_{\D,\T}&\longrightarrow&\displaystyle \sum_{\{T_1,\ldots,T_k\}\triangleleft_{t_0} F} 
\left(\prod_{e\in E(F)\setminus \sqcup E(T_i)}\lambda_{\type(e)}\right) F/\{T_1,\ldots,T_k\}.
\end{array}\right.\]
Then $\Psi_{t_0}^*$ is a Hopf algebra morphism from $\h_{\D,\T}^{CK_\lambda}$ to $\h_{\bfT_{\D,\T}^{(t_0)}}^{CK}$. 
It is an isomorphism if, and only if, $\lambda_{t_0}\neq 0$.
\end{prop}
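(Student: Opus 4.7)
The plan is to recognize $\Psi_{t_0}^*$ as the graded transpose of the Hopf algebra morphism $\Psi_{t_0}$ from the previous section, under the nondegenerate pairings $\langle F, F'\rangle = \delta_{F,F'}s_F$. Recall that, as established in the construction of $\Delta^{CK_\lambda}$, these pairings make $\h_{\D,\T}^{CK_\lambda}$ and $\h_{\D,\T}^{GL_\lambda}$ into graded dual Hopf algebras (the pairing converts $\star_\lambda$ to $\Delta^{CK_\lambda}$ and juxtaposition to $\Delta$), and the same holds for the non-typed pair $\h_{\bfT_{\D,\T}^{(t_0)}}^{CK}$ and $\h_{\bfT_{\D,\T}^{(t_0)}}^{GL}$. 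I first introduce the map $\widetilde\Psi_{t_0}^*$ defined by $\langle \widetilde\Psi_{t_0}^*(F), G\rangle = \langle F, \Psi_{t_0}(G)\rangle$ for all $F\in \bfF_{\D,\T}$ and $G \in \bfF_{\bfT_{\D,\T}^{(t_0)}}$. A standard Hopf-pairing calculation, using that $\Psi_{t_0}$ respects both $\star$ and $\Delta$, shows automatically that $\widetilde\Psi_{t_0}^*$ is a Hopf algebra morphism from $\h_{\D,\T}^{CK_\lambda}$ to $\h_{\bfT_{\D,\T}^{(t_0)}}^{CK}$.

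The main combinatorial step is to verify that the explicit formula given in the statement equals $\widetilde\Psi_{t_0}^*$. Since $\Psi_{t_0}$ is the Guin-Oudom extension of the prelie morphism $\psi_{t_0}$, it is a coalgebra morphism on the symmetric coalgebra, so on a forest $G=S_1\ldots S_k \in \bfF_{\bfT_{\D,\T}^{(t_0)}}$ it acts as the (symmetric) product $\psi_{t_0}(S_1)\ldots \psi_{t_0}(S_k)$. Computing $\psi_{t_0}$ on an individual tree $S\in \bfT_{\bfT_{\D,\T}^{(t_0)}}$ by induction on the number of vertices, using the prelie-morphism property together with Lemma~\ref{lemma8}, one realizes $\psi_{t_0}(S)$ as a sum over expansions obtained by replacing each vertex of $S$ by its decorating tree in $\bfT_{\D,\T}^{(t_0)}$ and, for each edge of $S$, choosing a type $t\in \T$ (weighted by $\lambda_t$) and a target vertex. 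The coefficient of a fixed $F \in \bfF_{\D,\T}$ in $\Psi_{t_0}(G)$ is then a weighted count of such expansions, which is tautologically in bijection with partitions $\{T_1,\ldots,T_k\}\triangleleft_{t_0} F$ having $F/\{T_1,\ldots,T_k\}=G$: the $T_i$ are the subtrees expanded from the vertices of $G$, and the non-contracted edges of $F$ carry the chosen types, with the product $\prod_{e}\lambda_{\type(e)}$ matching on both sides. The symmetry factors $s_F$ and $s_G$ reconcile after accounting for automorphisms, yielding $\langle \widetilde\Psi_{t_0}^*(F),G\rangle = \langle \Psi_{t_0}^*(F),G\rangle$, whence $\widetilde\Psi_{t_0}^*=\Psi_{t_0}^*$.

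The isomorphism claim then follows from Proposition~\ref{prop19}: $\psi_{t_0}$ is a prelie isomorphism if and only if $\lambda_{t_0}\neq 0$, so by functoriality of the Guin-Oudom construction $\Psi_{t_0}$ is a Hopf algebra isomorphism under the same condition. Since $\Psi_{t_0}^*$ is its graded transpose under a perfect pairing, it is an isomorphism precisely when $\Psi_{t_0}$ is.

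The main obstacle is the weighted-bijection argument in the second paragraph. Unwinding the Guin-Oudom production of $\Psi_{t_0}(G)$ into an explicit sum of $\D$-decorated $\T$-typed trees requires careful induction, and one must check that the constraint $T_i \in \bfT_{\D,\T}^{(t_0)}$ — no edge outgoing the root of type $t_0$ — exactly matches the constraint that characterizes generators of the free prelie algebra $(\g_{\D,\T},\bullet_\lambda)$ underlying $\psi_{t_0}$. The ratio $s_F/s_G$ is the other bookkeeping subtlety to handle when reconciling weighted counts of expansions with weighted counts of contractions.
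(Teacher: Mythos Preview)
Your approach is exactly the paper's: identify $\Psi_{t_0}^*$ as the transpose of $\Psi_{t_0}$ under the symmetry-factor pairing and invoke Proposition~\ref{prop19}. The paper in fact asserts the identification ``$\Psi_{t_0}^*$ is the transpose of $\Psi_{t_0}$'' without spelling out the weighted-bijection/symmetry-factor verification you sketch, so your second paragraph adds useful detail.

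However, there is a genuine gap. Your argument silently assumes the graded duality between $\h_{\D,\T}^{CK_\lambda}$ and $\h_{\D,\T}^{GL_\lambda}$ is available, but this requires (i) the graded pieces to be finite-dimensional, i.e.\ $\D$ and $\T$ finite, and (ii) $\lambda\in\K^{(\T)}$, since $\bullet_\lambda$ and hence $\h_{\D,\T}^{GL_\lambda}$ and $\Psi_{t_0}$ are only defined for finitely supported $\lambda$ (see the opening of Section~4 and the hypothesis of Proposition~\ref{prop19}). The statement of Proposition~\ref{prop26} allows arbitrary $\D,\T$ and $\lambda\in\K^\T$. The paper handles this by first proving the finite case via duality, then reducing the general case to it: for any $x,y\in\h_{\D,\T}$ choose finite $\D'\subseteq\D$, $\T'\subseteq\T$ with $t_0\in\T'$ containing all decorations and types appearing, set $\lambda'=\lambda_{\mid\T'}$, and observe that the restriction of $\Psi_{t_0}^*$ agrees with the finite-case map. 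You should add this reduction. Likewise, for the ``only if'' direction when $\lambda_{t_0}=0$, your transpose argument is unavailable in the general case; the paper instead exhibits directly that a two-vertex tree with its edge of type $t_0$ is sent to $0$.
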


\begin{proof}
\textit{First case.} We first assume that $\D$ and $\T$ are finite. In this case, $\h_{\D,\T}^{CK_\lambda}$ 
is the graded dual of $\h_{\D,\T}^{GL_\lambda}$,
with the Hopf pairing $\langle-,-\rangle$; grading $\h_{\bfT_{\D,\T}^{(t_0)}}$ by the number of vertices of the decorations,
$\h_{\bfT_{\D,\T}^{(t_0)}}^{CK}$ is the graded dual of $\h_{\bfT_{\D,\T}^{(t_0)}}^{GL}$.
Moreover, $\Psi_{t_0}^*$ is the transpose of $\Psi_{t_0}$ of proposition \ref{prop19}, so is a Hopf algebra morphism.
If $\lambda_{t_0}\neq 0$, $\Psi_{t_0}$ is an isomorphism, so $\Psi_{t_0}^*$ also is.\\

\textit{General case}. Let $x,y\in \h_{\D,\T}$. There exist finite $\D',\T'$, such that $x,y\in \h_{\D',\T'}$; we can assume that $t_0\in \T'$.
We denote by $\lambda'=\lambda_{\mid \T'}$. 
Then, by the preceding case, denoting by $\Psi'_{t_0}$ the restriction of $\Psi^*_{t_0}$
to $\h_{\D',\T'}$:
\begin{align*}
\Psi^*_{t_0}(xy)&=\Psi'_{t_0}(xy)=\Psi'_{t_0}(x)\Psi'_{t_0}(y)=\Psi^*_{t_0}(x)\Psi^*_{t_0}(y),\\
\Delta^{CK_\lambda}\circ \Psi^*_{t_0}(x)&=\Delta^{CK_{\lambda'}} 
\circ \Psi'_{t_0}(x)=(\Psi'_{t_0}\otimes \Psi'_{t_0})\circ \Delta^{CK_{\lambda'}}(x)
=(\Psi^*_{t_0}\otimes \Psi^*_{t_0})\circ \Delta^{CK_\lambda}(x),
\end{align*}
so $\Psi$ is a Hopf algebra morphism.\\

Let us assume that $\lambda_{t_0}\neq 0$. If $\Psi^*_{t_0}(x)=0$, then $\Psi'_{t_0}(x)=0$. As $a'_{t_0}\neq 0$, by the first case,
$x=0$, so $\Psi^*_{t_0}$ is injective. Moreover, there exists $z\in \h_{\D',\T'}$, such that $\Psi'_{t_0}(z)=y$;
so $\Psi^*_{t_0}(z)=y$, and $\Psi^*_{t_0}$ is surjective.\\

Let us assume that $\lambda_{t_0}=0$. Let $T$ be a tree with two vertices, such that its unique edge is of type $t_0$.
As $T \notin \bfT_{\D,\T}^{(t_0)}$, $\Phi_{t_0}(T)$ has a unique term, given by the partition $X=\{\{x_1\},\{x_2\}\}$, 
where $x_1$ and $x_2$ are the vertices of $T$. 
Hence:
\[\Psi^*_{t_0}(T)=\lambda_{t_0}T'=0,\]
so $\Psi^*_{t_0}$ is not injective.  \end{proof}

\begin{example}
Here, $\T$ contains two elements, $\typ{10}$	 and $\typ{2}$.
In order to simplify, we omit the decorations of vertices. We put:
\begin{align*}
x&=\tun,&y&=\tdeux{2},&z&=\ttroisun{2}{2},&
u&=\tdtroisdeux{}{}{}{2}{10},&v&=\tdtroisdeux{}{}{}{2}{2}.
\end{align*}
Applying $\Psi^*_{\typ{10}}$:
\begin{align*}
\Psi^*_{\typ{10}}(\tun)&=\tdun{$x$},&
\Psi^*_{\typ{10}}(\ttroisun{10}{10})&=
\lambda_{\typ{10}}^2 \tdtroisun{$x$}{$x$}{$x$}{10}{10},\\
\Psi^*_{\typ{10}}(\tdeux{10})&=\lambda_{\typ{10}} \tddeux{$x$}{$x$}{10},&
\Psi^*_{\typ{10}}(\tdtroisdeux{}{}{}{10}{10})&=
\lambda_{\typ{10}}^2\tdtroisdeux{$x$}{$x$}{$x$}{10}{10},\\
\Psi^*_{\typ{10}}(\tdeux{2})&=\lambda_{\typ{2}} \tddeux{$x$}{$x$}{10}+\tdun{$y$},&
\Psi^*_{\typ{10}}(\tdtroisdeux{}{}{}{10}{2})&=\lambda_{\typ{10}}\lambda_{\typ{2}}
\tdtroisdeux{$x$}{$x$}{$x$}{10}{10}+\lambda_{\typ{10}}\tddeux{$x$}{$y$}{10},\\
\Psi^*_{\typ{10}}(\ttroisun{2}{10})&=
\lambda_{\typ{10}}\lambda_{\typ{2}} \tdtroisun{$x$}{$x$}{$x$}{10}{10}
+\lambda_{\typ{10}}\tddeux{$y$}{$x$}{10},&
\Psi^*_{\typ{10}}(\tdtroisdeux{}{}{}{2}{10})&=\lambda_{\typ{10}}\lambda_{\typ{2}}
\tdtroisdeux{$x$}{$x$}{$x$}{10}{10}+\lambda_{\typ{10}}\tddeux{$y$}{$x$}{10}+\tdun{$u$},\\
\Psi^*_{\typ{10}}(\ttroisun{2}{2})&=
\lambda_{\typ{2}}^2 \tdtroisun{$x$}{$x$}{$x$}{10}{10}
+2\lambda_{\typ{2}}\tddeux{$y$}{$x$}{10}+\tdun{$z$},&
\Psi^*_{\typ{10}}(\tdtroisdeux{}{}{}{2}{2})&=\lambda_{\typ{2}}^2
\tdtroisdeux{$x$}{$x$}{$x$}{10}{10}+\lambda_{\typ{2}}\tddeux{$y$}{$x$}{10}
+\lambda_{\typ{2}}\tddeux{$x$}{$y$}{10}+\tdun{$v$}.
\end{align*}\end{example}

\begin{remark}
Although it is not indicated, $\Psi_{t_0}$ and $\Psi_{t_0}^*$ depend on $\lambda$.
\end{remark}

\subsection{Bialgebras in cointeraction}

By \cite{FoissyOperads}, for any $\lambda\in \K^{(\T)}$, 
the operad morphism $\theta_a:\mathbf{PreLie}\longrightarrow \P_\T$,  which send $\bullet$ to $\bullet_\lambda$,
where $\bf{PreLie}$ is the operad of pre-Lie algebras, 
induces a pair of cointeracting bialgebras for any finite set $\D$. 
By construction, the first bialgebra of the pair is $\h_{\D,\T}^{CK_\lambda}$. Let us describe the second one.

\begin{defi}
Let $F\in \bfF_{\T,\D}$. We shall say that $\{T_1,\ldots,T_k\}\triangleleft F$ if:
\begin{enumerate}
\item $\{T_1,\ldots,T_k\}$ is a partition of $V(F)$. 
Consequently, for any $i\in [k]$, $T_i\in \bfF_{\D,\T}$, by restriction.
\item For any $i\in [k]$, $T_i\in \bfT_{\D,\T}$.
\end{enumerate}
If $\{T_1,\ldots,T_k\}\triangleleft F$ and $\dec:[k]\longrightarrow \D$, we denote by $(F/\{T_1,\ldots,T_k\},\dec)$ 
the forest obtained by contracting $T_i$ to a single vertex, 
and decorating this vertex by $\dec(i)$, for all $i\in [k]$. This is an element of $\bfF_{\D,\T}$. 
\end{defi}

\begin{prop}
If $\D$ is finite, $\h_{\D,\T}'$ is the free commutative algebra generated by pairs $(T,d)$, 
where $T\in \bfT_{\T,\D}$ and $d\in \D$. 
The coproduct is given, for any $F\in \bfF_{\D,\T}$, $d\in \D$, by:
\begin{align*}
\delta(F,d)=\sum_{\{T_1,\ldots,T_k\}\triangleleft F}
\sum_{\dec:[k]\longrightarrow \D} ((F/\{T_1,\ldots,T_k\},\dec),d)\otimes (T_1,\dec(1))\ldots (T_k,\dec(k)).
\end{align*}
Then $(\h_{\D,\T}',m,\delta)$ is a bialgebra, and $\h_{\D,\T}^{CK_\lambda}$ is a coalgebra 
in the category of $\h_{\D,\T}'$-comodules via the coaction given, for any $T\in \bfT_{\D,\T}$, by:
\begin{align*}
\overline{\delta}(T)=\sum_{\{T_1,\ldots,T_k\}\triangleleft T}
\sum_{\dec:[k]\longrightarrow \D} ((T/\{T_1,\ldots,T_k\},\dec)\otimes (T_1,\dec(1))\ldots (T_k,\dec(k)).
\end{align*}\end{prop}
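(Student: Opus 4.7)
The plan is to apply the general construction of \cite{FoissyOperads}, which produces a pair of cointeracting bialgebras from any operad morphism with source $\mathbf{Prelie}$, specialized to $\theta_\lambda:\mathbf{Prelie}\to \P_\T$. As recalled just before the statement, the first bialgebra of the resulting pair is $\h_{\D,\T}^{CK_\lambda}$, so it suffices to describe the second component and the coaction explicitly. In general, the second bialgebra of the pair is the free commutative algebra on the species $\P_\T$ colored by $\D$ on the output wire, which is exactly the polynomial algebra on pairs $(T,d)$ with $T\in \bfT_{\D,\T}$ and $d\in \D$. Its coproduct $\delta$ is the extraction-contraction coproduct furnished by the construction: on a generator $(T,d)$, sum over partitions $\{T_1,\ldots,T_k\}\triangleleft T$ of the vertex set into connected subtrees (the atoms of $\P_\T$) and over decorations $\dec:[k]\to \D$, producing the tensor displayed in the statement; extending multiplicatively recovers $\delta$ in general. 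The coaction $\overline{\delta}$ on $\h_{\D,\T}^{CK_\lambda}$ is the natural variant obtained by omitting the spectator decoration $d$ in the first tensor factor.

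The bialgebra axioms then follow from the general theorem, and their combinatorial content is as follows. Coassociativity of $\delta$ translates into the bijection between nested data $\{T_1,\ldots,T_k\}\triangleleft T$ together with $\{T_{i,1},\ldots,T_{i,k_i}\}\triangleleft T_i$ for each $i$, and single data $\{T_{i,j}\}_{i,j}\triangleleft T$ into finer connected subtrees (plus a free choice of the intermediate layer of decorations). Counitality is immediate. The cointeraction axiom, stating that $\overline{\delta}$ intertwines $\Delta^{CK_\lambda}$ with $(\overline{\delta}\otimes \overline{\delta})\circ \Delta^{CK_\lambda}$ up to the usual reshuffle and multiplication on the $\h_{\D,\T}'$-factors, is the compatibility of admissible cuts with subtree-partitions: given $\{T_1,\ldots,T_k\}\triangleleft T$, any admissible cut $c$ of $T$ decomposes uniquely into cuts $c\cap E(T_i)$ of each atom plus a cut $c\setminus \bigsqcup_i E(T_i)$ of the quotient (whose edges are naturally identified, preserving their types, with edges of $T/\{T_1,\ldots,T_k\}$); admissibility passes to each piece since it only tests root-to-leaf paths, which contraction of connected subtrees respects.

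The main obstacle, and the combinatorial heart of the argument, is the careful accounting of the $\lambda$-weights. Since the edge set of $T$ is partitioned as $\bigsqcup_i E(T_i)\sqcup \bigl(E(T)\setminus \bigsqcup_i E(T_i)\bigr)$ and types are inherited by the atoms and by the quotient respectively, the product $\prod_{e\in c}\lambda_{\type(e)}$ factors multiplicatively across the decomposition. This gives both sides of the cointeraction identity the same coefficient on generators; by multiplicativity of $\delta$ and $\Delta^{CK_\lambda}$, the identity then extends to all of $\h_{\D,\T}^{CK_\lambda}$, establishing the coaction axiom and completing the identification of the second bialgebra of the pair with $(\h_{\D,\T}',m,\delta)$. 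Finiteness of $\D$ enters only to ensure that the generators $(T,d)$ form a well-defined set and that the construction of \cite{FoissyOperads} applies without modification.
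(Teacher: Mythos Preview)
Your proposal is correct and follows essentially the same approach as the paper: the proposition is stated there without its own proof, being presented as a direct instance of the general construction in \cite{FoissyOperads} applied to the operad morphism $\theta_\lambda:\mathbf{Prelie}\to\P_\T$. You add a combinatorial unpacking of coassociativity, the cointeraction identity, and the factorization of the $\lambda$-weights, which the paper leaves implicit in the cited reference; this extra detail is consistent with, and a faithful expansion of, the paper's argument.
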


\begin{cor}
Let us assume that $\D$ is given a semigroup law denoted by $+$. 
If $F\in \bfF_{\T,\D}$, and $\{T_1,\ldots,T_k\}\triangleleft F$, then naturally 
$T_i \in \bfT_{\T,\D}$ for any $i$ and the $\T$-typed forest $F/\{T_1,\ldots,T_k\}$
is given a $\D$-decoration, decorating the vertex obtained in the contradiction of $T_i$
by the sum of the decorations of the vertices of $T_i$. Then $\h_{\D,\T}$ is given a second coproduct $\delta$
such that for any $F\in \bfF_{\D,\T}$:
\begin{align*}
\delta(F)&=\sum_{\{T_1,\ldots,T_k\}\triangleleft F} F/\{T_1,\ldots,T_k\}\otimes T_1\ldots T_k.
\end{align*}
Then $(\h_{\D,\T},m,\delta)$ is a bialgebra and $\h_{\D,\T}^{CK_\lambda}$ is a coalgebra 
in the category of $\h_{\D,\T}$-comodules via the coaction $\delta$.
\end{cor}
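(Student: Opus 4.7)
The plan is to deduce all assertions from the preceding proposition by constructing a surjective bialgebra morphism $q \colon \h_{\D,\T}' \longrightarrow \h_{\D,\T}$ that uses the semigroup law on $\D$ to collapse the extra $\D$-parameter carried by the generators of $\h_{\D,\T}'$. For any tree $T \in \bfT_{\D,\T}$, write $\sigma(T) = \sum_{v \in V(T)} \dec(v) \in \D$ for the sum in $(\D,+)$ of all decorations appearing in $T$. On the free commutative generators $(T,d)$ of $\h_{\D,\T}'$ I set
\[
 q(T,d) \;=\; \begin{cases} T & \text{if } d = \sigma(T), \\ 0 & \text{otherwise,} \end{cases}
\]
and extend $q$ multiplicatively.

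Next I would verify that $q$ intertwines the two coproducts, i.e. $\delta \circ q = (q \otimes q) \circ \delta'$ on each generator $(T,d)$, where $\delta'$ denotes the coproduct of $\h_{\D,\T}'$ from the preceding proposition. Applying $(q \otimes q)$ to
\[
 \delta'(T,d) \;=\; \sum_{\{T_1,\ldots,T_k\}\triangleleft T}\;\sum_{\dec \colon [k]\to \D}\bigl((T/\{T_1,\ldots,T_k\},\dec),d\bigr) \otimes \prod_{i=1}^k (T_i,\dec(i)),
\]
only the terms with $\dec(i) = \sigma(T_i)$ for every $i$ survive on the right tensor factor; because $V(T) = \bigsqcup_i V(T_i)$ and $+$ is associative and commutative enough to make unordered sums unambiguous, the decoration of the left tensor factor $(T/\{T_1,\ldots,T_k\},\dec)$ has total $\sum_i \dec(i) = \sigma(T)$, so the condition $d = \sigma(T)$ is exactly what makes $q$ of the left factor nonzero. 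Matching surviving terms against the definition of $\delta$ in the statement reproduces $\delta(T)$, and multiplicativity propagates the identity to all of $\h_{\D,\T}'$.

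Hence the bialgebra axioms for $(\h_{\D,\T},m,\delta)$ are inherited from $(\h_{\D,\T}',m,\delta')$ through the surjective bialgebra morphism $q$, and composing the coaction $\overline{\delta}$ of the preceding proposition with $\mathrm{id} \otimes q$ yields precisely the coaction of $\h_{\D,\T}$ on $\h_{\D,\T}^{CK_\lambda}$ stated in the corollary. The cointeraction compatibility with $\Delta^{CK_\lambda}$ then follows from the one already established at the $\h_{\D,\T}'$-level. The finiteness hypothesis on $\D$ used for the preceding proposition is inessential here, since every forest involves only finitely many decorations and any identity can be checked inside $\h_{\D_0,\T}$ for a sufficiently large finite sub-semigroup $\D_0 \subseteq \D$.

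The main technical point I anticipate is the key identity $\sigma(T) = \sum_{i} \sigma(T_i)$ whenever $\{T_1,\ldots,T_k\} \triangleleft T$, which propagates the condition $\dec(i) = \sigma(T_i)$ on the right tensor factor to the condition $d = \sigma(T)$ on the left and is what justifies replacing the independent auxiliary decoration $d$ of $\h_{\D,\T}'$ by the vertex-sum $\sigma$ of $\h_{\D,\T}$. Once that identity is in hand, the rest is a formal transport of structure through $q$.
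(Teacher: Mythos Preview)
Your proof is correct and follows the same approach as the paper: your map $q$ is exactly the paper's surjection $\varpi$, and your verification that $q$ intertwines the two coproducts is equivalent to the paper's verification that $\ker\varpi$ is a coideal, both resting on the key identity $\sigma(T)=\sum_i\sigma(T_i)$ for any $\{T_1,\ldots,T_k\}\triangleleft T$. One minor quibble: your finiteness workaround via ``a sufficiently large finite sub-semigroup $\D_0\subseteq\D$'' is not quite right, since the sub-semigroup generated by finitely many elements need not be finite---but the paper's own proof does not address the passage from finite to arbitrary $\D$ either, so this is not a divergence from the intended argument.
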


\begin{proof}
We denote by $I$ the ideal of $\h'_{\D,\T}$ generated by pairs $(T,d)$ such that $T\in \bfT_{\D,\T}$
and $d\in \D$, with:
\[d\neq \sum_{v\in V(T)}\dec(v).\]
The quotient $\h'_{\D,\T}/I$ is identified with $\h_{\D,\T}$, trough the surjective algebra morphism:
\[\varpi:\left\{\begin{array}{rcl}
\h'_{\D,\T}&\longrightarrow&\h_{\D,\T}\\
(F,d)\in \bfF_{\D,\T}\times \D&\longrightarrow&\begin{cases}
\displaystyle F\mbox{ if }d=\sum_{v\in V(F)}\dec(v),\\
0\mbox{ otherwise}.
\end{cases}
\end{array}\right.\]
Let us prove that $I$ is a coideal. Let $T\in \bfT_{\T,\D}$, $d\in \D$,
$\{T_1,\ldots,T_k\}\triangleleft F$, $\dec:[k]\longrightarrow \D$ such that 
$((T/\{\T_1,\ldots,T_k\},\dec),d)\notin I$ and for any $i$, $(T_i,\dec(i))\notin I$. Then:
\begin{align*}
&\forall i\in [k],& \sum_{v\in V(T_i)} \dec(v)&=\dec(i),&
&&\sum_{i=1}^k \dec(i)&=d.
\end{align*} 
Hence:
\[\sum_{v\in V(T)}\dec(v)=\sum_{i=1}^k \sum_{v\in V(T_i)} \dec(v)
=\sum_{i=1}^k \dec(i)=d,\]
so $(T,d)\notin I$. Consequently, if $T\in I$, then 
$((T/\{\T_1,\ldots,T_k\},\dec),d)\in I$ or at least one of the $(T_i,\dec(i))$ belongs to $I$.
Hence:
\[\delta(I)\subseteq I\otimes \h'_{\D,\T}+\otimes \h'_{\D,\T}\otimes I.\]
So $I$ is a coideal. The coproduct induced on $\h_{\D,\T}$ by the morphism $\varpi$
is precisely the one given in the setting of this Corollary. \end{proof}

In particular, if $\D$ is reduced to a single element, denoted by $*$,
if we give it its unique semigroup structure ($*+*=*$), We obtain again the result of \cite{Manchon}.

\subsection{The Bruned-Hairer-Zambotti construction}

We now consider the coproducts on typed trees in \cite[Theorem 2.2.16]{Bruned},
the first one with $\overline{\mathfrak{A}}(F)=\mathfrak{A}(F)$ and the second one with
$\overline{\mathfrak{A}}(F)=\mathfrak{A}^+(F)$ of \cite[Definition 2.4.1]{Bruned}. 
By definition \cite[Definition 2.26]{Bruned} of admissible subtrees, according to the notations we choose in this paper:
\begin{itemize}
\item Let $\L$ be a finite set of types. The considered trees are $\L$-typed and the leaves are $\L$-decorated.
Considering that the internal vertices of such a tree are in fact decorated by an element $0\notin \L$,
these trees form a subset of $\bfT_{\L\sqcup\{0\},\L}$ which we denote by $\bfT_\L'$. 
\item The first coproduct $\Delta_+$ is given on any tree $T\in \bfT_\L'$ by:
\[\Delta_+(T)=\sum_{c\in \adm'(T)} R^c(T)\otimes P^c(T),\]
where $\adm'(T)$ is the set of admissible cuts of $T$ such that the set of leaves of $R^c(T)$ is a subset of leaves of $T$
(note that automatically, the leaves of $P^c(T)$ are also leaves of $T$).
\item The second coproduct is given on any tree $T\in \bfT_\L'$ by:
\[\overline{\Delta}(T)=\sum_{\{T_1,\ldots,T_k\}\triangleleft' T}T/\{T_1,\ldots,T_k\}\otimes T_1\ldots T_k,\]
where the sum runs over all $\{T_1,\ldots,T_k\}\triangleleft T$ such that the leaves of $T/\{T_1,\ldots,T_k\}$
are leaves of $T$. 
\end{itemize}
We denote by $\h'_\L$ the subalgebra of $\h_{\L\sqcup\{0\}, \L}$ generated by $bfT_\L'$.
Then $(\h'_\L,m,\Delta_+)$ and $(\h'_\L,m,\delta)$ are bialgebras.

Let us give to $\L$ any product $\times$ making it a commutative associative semigroup.
We extend this structure to $\L\sqcup \{0\}$ by:
\begin{align*}
&\forall t\in \L\sqcup \{0\},& 0\times t=t\times 0=0.
\end{align*}
We take $\lambda_t=1$ for any $t\in \L\sqcup\{0\}$, and obtain with this data two coproducts
$\Delta$ and $\delta$ on $\h_{\L\sqcup\{0\}, \L}$, the first one given by admissible cuts and the second one by contractions
of subtrees. The subalgebra $\h''_\L$ of  $\h_{\L\sqcup\{0\}, \L}$ generated by trees such that any internal vertex
is decorated by $0$ (note that the leaves of such a tree are decorated by $L\sqcup \{0\}$)  is a subbialgebra for both coproducts.
We denote by $I$ the ideal of $\h''_\L$ generated by trees with at least one leaf decorated by $0$. 
Then it is a coideal for both coproducts, so the quotient algebra $\h''_\L/I$ inherits two coproducts, still denoted by $\Delta$
and $\delta$.  This algebra is trivially identified with the algebra generated by $\bfT_\L'$, that is to say
with $\h'_\L$. By construction of the different coproducts, this identification is an isomorphism from the Hopf algebra
$(\h''_\L/I,\Delta)$ to $(\h'_\L/I,\Delta_+)$  and from the Hopf algebra $(\h''_\L/I,\delta)$ to $(\h'_\L/I,\overline{\Delta})$. 
In other words, the Bruned-Hairer-Zambotti construction of cointeracting bialgebras on typed trees is a subquotient
of the construction presented here.

\section{Appendix}

\subsection{Values of $t_{D,T}(k)$}

For $D=1$:
\[\begin{array}{|c||c|c|c|c|c|c|c|c|}
\hline T\setminus k&1&2&3&4&5&6&7&8\\
\hline \hline 1& 1& 1& 2& 4& 9& 20& 48& 115\\
\hline 2&1& 2& 7& 26& 107& 458& 2058& 9498\\
\hline 3&1& 3& 15& 82& 495& 3144& 20875& 142773\\
\hline 4&1& 4& 26& 188& 1499& 12628& 111064& 1006840\\
\hline 5&1& 5& 40& 360& 3570& 37476& 410490& 4635330\\
\hline 6&1& 6& 57& 614& 7284& 91566& 1200705& 16232820\\
\hline 7&1& 7& 77& 966& 13342& 195384& 2984142& 46990952\\
\hline 8&1& 8& 100& 1432& 22570& 377320& 6578116& 118238600\\
\hline 9&1& 9& 126& 2028& 35919& 674964& 13225632& 267188229\\
\hline 10&1& 10& 155& 2770& 54465& 1136402& 24723000& 554540590\\
\hline\end{array}\]
For $D=2$:
\[\begin{array}{|c||c|c|c|c|c|c|c|c|}
\hline T\setminus k&1&2&3&4&5&6&7&8\\
\hline 1&2& 4& 14& 52& 214& 916& 4116& 18996\\
\hline 2&2& 8& 52& 376& 2998& 25256& 222128& 2013680\\
\hline 3&2& 12& 114& 1228& 14568& 183132& 2401410& 32465640\\
\hline 4&2& 16& 200& 2864& 45140& 754640& 13156232& 236477200\\
\hline 5&2& 20& 310& 5540& 108930& 2272804& 49446000& 1109081180\\
\hline 6&2& 24& 444& 9512& 224154& 5606520& 146204792& 3930863232\\
\hline 7&2& 28& 602& 15036& 413028& 12043500& 366122190& 11475005616\\
\hline 8&2& 32& 784& 22368& 701768& 23373216& 811575408& 29052861280\\
\hline 9&2& 36& 990& 31764& 1120590& 41969844& 1638712716& 65965167108\\
\hline 10&2& 40& 1220& 43480& 1703710& 70875208& 3073688160& 137426005200\\
\hline\end{array}\]
For $D=3$:
\[\begin{array}{|c||c|c|c|c|c|c|c|c|}
\hline T\setminus k&1&2&3&4&5&6&7&8\\
\hline 1&3& 9& 45& 246& 1485& 9432& 62625& 428319\\
\hline 2&3& 18& 171& 1842& 21852& 274698& 3602115& 48698460\\
\hline 3&3& 27& 378& 6084& 107757& 2024892& 39676896& 801564687\\
\hline 4&3& 36& 666& 14268& 336231& 8409780& 219307188& 5896294848\\
\hline 5&3& 45& 1035& 27690& 814680& 25444584& 828506340& 27812997990\\
\hline 6&3& 54& 1485& 47646& 1680885& 62954766& 2458069074& 98947750662\\
\hline 7&3& 63& 2016& 75432& 3103002& 135520812& 6170116638& 289616448690\\
\hline 8&3& 72& 2628& 112344& 5279562& 263423016& 13701398868& 734709311208\\
\hline 9&3& 81& 3321& 159678& 8439471& 473586264& 27703353159& 1670715963729\\
\hline 10&3& 90& 4095& 218730& 12842010& 800524818& 52018920345& 3484841027040\\
\hline\end{array}\]
For $D=4$:
\[\begin{array}{|c||c|c|c|c|c|c|c|c|}
\hline T\setminus k&1&2&3&4&5&6&7&8\\
\hline 1&4& 16& 104& 752& 5996& 50512& 444256& 4027360\\
\hline 2&4& 32& 400& 5728& 90280& 1509280& 26312464& 472954400\\
\hline 3&4& 48& 888& 19024& 448308& 11213040& 292409584& 7861726464\\
\hline 4&4& 64& 1568& 44736& 1403536& 46746432& 1623150816& 58105722560\\
\hline 5&4& 80& 2440& 86960& 3407420& 141750416& 6147376320& 274852010400\\
\hline 6&4& 96& 3504& 149792& 7039416& 351230688& 18268531824& 979612414944\\
\hline 7&4& 112& 4760& 237328& 13006980& 756866096& 45910215120& 2871018269632\\
\hline 8&4& 128& 6208& 353664& 22145568& 1472317056& 102037088448& 7290356719488\\
\hline 9&4& 144& 7848& 502896& 35418636& 2648533968& 206451156768& 16590568445280\\
\hline 10&4& 160& 9680& 689120& 53917640& 4479065632& 387863411920& 34625886677920\\
\hline\end{array}\]

\subsection{Values of $t'_{D,T}(k)$}

For $D=1$:
\[\begin{array}{|c||c|c|c|c|c|c|c|c|}
\hline T\setminus k&1&2&3&4&5&6&7&8\\
\hline 1&1& 0& 0& 0& 0& 0& 0& 0\\ 
\hline 2&1& 1& 3& 10& 39& 160& 702& 3177\\ 
\hline 3&1& 2& 9& 46& 268& 1660& 10845& 73270\\ 
\hline 4&1& 3& 18& 124& 963& 7968& 69236& 621999\\ 
\hline 5&1& 4& 30& 260& 2525& 26136& 283528& 3178696\\ 
\hline 6&1& 5& 45& 470& 5480& 68096& 885805& 11904160\\ 
\hline 7&1& 6& 63& 770& 10479& 151956& 2304974& 36110880\\ 
\hline 8&1& 7& 84& 1176& 18298& 303296& 5255964& 94051770\\ 
\hline 9&1& 8& 108& 1704& 29838& 556464& 10845732& 218239560\\ 
\hline 10&1& 9& 135& 2370& 46125& 955872& 20696076& 462558987\\
\hline\end{array}\]
For $D=2$:
\[\begin{array}{|c||c|c|c|c|c|c|c|c|}
\hline T\setminus k&1&2&3&4&5&6&7&8\\
\hline 1&2& 0& 0& 0& 0& 0& 0& 0\\ 
\hline 2&2& 4& 22& 144& 1090& 8864& 76162& 678532\\ 
\hline 3&2& 8& 68& 688& 7886& 96896& 1250780& 16713504\\ 
\hline 4&2& 12& 138& 1888& 29004& 476736& 8213588& 146342376\\ 
\hline 5&2& 16& 232& 4000& 77060& 1586304& 34185344& 761389360\\ 
\hline 6&2& 20& 350& 7280& 168670& 4171744& 107932710& 2884827980\\ 
\hline 7&2& 24& 492& 11984& 324450& 9370368& 282934428& 8822987856\\
\hline 8&2& 28& 658& 18368& 569016& 18793600& 648698792& 23119514576\\ 
\hline 9&2& 32& 848& 26688& 930984& 34609920& 1344232416& 53898191520\\ 
\hline 10&2& 36& 1062& 37200& 1442970& 59627808& 2573660298& 114661732500\\
\hline\end{array}\]
For $D=3$:
\[\begin{array}{|c||c|c|c|c|c|c|c|c|}
\hline T\setminus k&1&2&3&4&5&6&7&8\\
\hline 1&3& 0& 0& 0& 0& 0& 0& 0\\ 
\hline 2&3& 9& 72& 705& 7947& 96588& 1237728& 16450389\\
\hline 3&3& 18& 225& 3408& 58347& 1072224& 20685195& 413084610\\ 
\hline 4&3& 27& 459& 9405& 216081& 5315112& 136987407& 3650993163\\ 
\hline 5&3& 36& 774& 19992& 576405& 17763984& 572991726& 19100718828\\ 
\hline 6&3& 45& 1170& 36465& 1264950& 46852884& 1815034140& 72635168880\\ 
\hline 7&3& 54& 1647& 60120& 2437722& 105455952& 4768982442& 222723271080\\ 
\hline 8&3& 63& 2205& 92253& 4281102& 211832208& 10953036318& 584744300226\\ 
\hline 9&3& 72& 2844& 134160& 7011846& 390570336& 22727284344& 1365242802048\\ 
\hline 10&3& 81& 3564& 187137& 10877085& 673533468& 43560017892& 2907844041231\\
\hline\end{array}\]
For $D=4$:
\[\begin{array}{|c||c|c|c|c|c|c|c|c|}
\hline T\setminus k&1&2&3&4&5&6&7&8\\
\hline 1&4& 0& 0& 0& 0& 0& 0& 0\\ 
\hline 2&4& 16& 168& 2192& 32844& 531200& 9051376& 159962784\\ 
\hline 3&4& 32& 528& 10656& 242792& 5939968& 152518064& 4053650976\\ 
\hline 4&4& 48& 1080& 29488& 902100& 29551104& 1014147872& 35989518528\\ 
\hline 5&4& 64& 1824& 62784& 2411024& 98976256& 4252211232& 188790415552\\ 
\hline 6&4& 80& 2760& 114640& 5297820& 261422336& 13491005840& 719200139360\\ 
\hline 7&4& 96& 3888& 189152& 10218744& 588999936& 35487727184& 2208096700896\\ 
\hline 8&4& 112& 5208& 290416& 17958052& 1184031744& 81574704960& 5802692175744\\ 
\hline 9&4& 128& 6720& 422528& 29428000& 2184360960& 169377005376& 13557899008896\\ 
\hline 10&4& 144& 8424& 589584& 45668844& 3768659712& 324805399344& 28894042642464\\
\hline\end{array}\]

\bibliographystyle{amsplain}
\bibliography{biblio}

\end{document}